\newcommand{\Ap}{\mathscr{P}}
\newcommand{\Rpi}{\varpi}
\newcommand{\Ri}{\mathfrak{R}}
\newcommand{\Cond}{\mathrm{Cond}}
\newcommand{\Z}{\mathbb{Z}}
\newcommand{\N}{\mathbb{N}}
\newcommand{\Q}{\mathbb{Q}}
\newcommand{\R}{\mathbb{R}}
\newcommand{\mS}{S}
\newcommand{\im}{\mathrm{Im}}
\newcommand{\mSloc}{S_{\nu,loc}}
\newcommand{\mSnu}{\mS_{\nu}}
\newcommand{\mSnup}{\mS_{\nu'}}
\newcommand{\mSpnu}{\mS_{\nu,\pi}}
\newcommand{\mSunu}{\mS_{\nu,u}}
\newcommand{\mSunup}{\mS_{\nu',u}}
\newcommand{\eSnu}{\mS'_{\nu}}
\newcommand{\eSpnu}{\mS'_{\nu,\pi}}
\newcommand{\eSunu}{\mS'_{\nu,u}}
\newcommand{\mE}{\mathscr{E}}
\newcommand{\mO}{\mathscr{O}}
\newcommand{\mSp}{\mS_\pi}
\newcommand{\mSu}{\mS_u}
\newcommand{\Frac}{\mathrm{Frac}}
\renewcommand{\ker}{\mathrm{ker}\,}
\newcommand{\coker}{\mathrm{coker}\,}
\newcommand{\M}{\mathscr{M}}
\newcommand{\Nc}{\mathscr{N}}
\newcommand{\Rc}{\mathscr{R}}
\newcommand{\sO}{\tilde{O}}
\newcommand{\qis}{\mathrm{qis}}
\newcommand{\Max}{\mathrm{Max}}
\newcommand{\Free}{\mathrm{Free}}
\newcommand{\free}{\mathrm{max}}
\newcommand{\uFree}{\underline \Free}
\newcommand{\uMax}{\underline \Max}
\newcommand{\Mod}{\mathrm{Mod}}
\newcommand{\uMod}{\underline \Mod}
\newcommand{\uModtf}{\underline \Mod^{\mathrm{tf}}}
\newcommand{\sat}{\mathrm{sat}}
\newcommand{\Hi}{\mathrm{Hi}}
\newcommand{\Lo}{\mathrm{Lo}}
\newcommand{\minplus}{\min\nolimits^+}
\newcommand{\la}{\leftarrow}
\newcommand{\repr}{\mathrm{repr}}
\newcommand\proche[2]{{\{ #2 \}_#1}}
\newcommand\procheplus[2]{{\{ #2 \}^+_#1}}
\newtheorem{theorem}{Theorem}[section]
\newtheorem{lemma}[theorem]{Lemma}
\newtheorem{proposition}[theorem]{Proposition}
\newtheorem{corollary}[theorem]{Corollary}
\newtheorem{definition}[theorem]{Definition}
\newtheorem{remark}[theorem]{Remark}
\newtheorem{example}[theorem]{Example}
\title{Linear Algebra over $\Z_p[[u]]$ and related rings}
\author{Xavier Caruso,
%\inst{1}
David Lubicz
%\inst{2}
}
\begin{document}
\maketitle

\begin{abstract}  Let $\Ri$ be a complete discrete valuation ring,
$\mS=\Ri[[u]]$ and $n$ a positive integer. The aim of this paper is to
explain how to compute efficiently usual operations such as sum and
intersection of sub-$\mS$-modules of $\mS^d$.  As $\mS$ is not
principal, it is not possible to have a uniform bound on the number of
generators of the modules resulting of these operations. We
explain how to mitigate this problem, following an idea of Iwasawa, by
computing an approximation of the result of these operations up to a
quasi-isomorphism.  In the course of the analysis of the $p$-adic and
$u$-adic precisions of the computations, we have to introduce more
general coefficient rings that may be interesting for their own sake.  Being
able to perform linear algebra operations modulo quasi-isomorphism
with $\mS$-modules has applications in Iwasawa theory and $p$-adic
Hodge theory.  \end{abstract}

\setcounter{tocdepth}{2}
\tableofcontents

\section{Introduction}
\label{sec:intro}

Let $\Ri$ be a complete discrete valuation ring (see \S
\ref{sec:notation} for 
a reminder of the definition) whose valuation is denoted by $v_\Ri$.
Let $K$ denote its fraction field with valuation $v_K$ and $\pi$ be a uniformizer of $\Ri$.
We set $\mS = \Ri[[u]]$; it is the ring of formal series over $\Ri$.  
Our aim is to provide efficient algorithms to deal with finitely 
generated modules over $\mS$. Since, we can always represent a torsion 
module as the quotient of two torsion-free modules, we shall focus on 
torsion-free modules.

Any finitely generated torsion-free $\mS$-module $\M$ can be considered as a
submodule of $\mS^d$ for $d$ big enough.  As a consequence, we can
represent $\M$ by a matrix whose columns are the coefficients of
generators of $\M$ in the canonical basis of $\mS^d$.  Thus we can
reformulate our problem as follows: given $M_1$ and $M_2$ two matrices
representing respectively the $\mS$-modules $\M_1$ and $\M_2$ embedded
in $\mS^d$, give algorithms to compute a matrix representing $\M_1
\cap \M_2$ or $\M_1+\M_2$. We would like also to be able to check
membership, equality of sub-$\mS$-modules, inclusions, \emph{etc.}
As $\mS$ is not a principal ideal domain, in order to control the
number of generators of the sub-$\mS$-modules of $\mS^d$, we propose,
following an idea of Iwasawa, to compute approximations of the
submodules resulting of aforementioned operations in the following
sense: we say that a morphism $\M_1 \rightarrow \M_2$ is a
quasi-isomorphism if its kernel and co-kernel have both finite length,
and we want to make computations modulo quasi-isomorphisms. We propose
two different approaches, each of them having its own advantages and
disadvantages.

%Being able to perform linear algebra operations modulo
%quasi-isomorphism with $\mS$-modules has applications in Iwasawa
%theory and $p$-adic Hodge theory (see for instance \cite{caruso-lubicz}).

First, we notice that there exists a 
correspondence between the set of classes of
modules modulo quasi-isomorphism and modules over the rings $\mS_\pi$
and $\mS_u$ defined respectively as the localization of $\mS$ with
respect to $\pi$ and the completion of the localization of $\mS$ with
respect to $u$.  For $A=\mSp, \mSu$, let $\Free_A^d$ be the set of
free sub-$A$-modules of $A^d$, and denote by $\Mod^d_{\mS/\qis}$ the set of
quasi-isomorphism classes of sub-$\mS$-modules of $\mS^d$, there is an
injective morphism $\Psi': \Mod^d_{\mS/\qis} \rightarrow \Free^d_{\mSp}
\times \Free^d_{\mSu}, \overline{\M} \mapsto (\M \otimes_{\mS} \mSp, \M
\otimes_{\mS} \mSu)$, where $\M$ is any representative in the class
$\overline{\M}$. The image of $\Psi'$ can be precisely characterized
(see Theorem \ref{introtheo:bijection} below). Using this correspondence,
operations with modules with coefficients in $\mS$ reduces to the
computation with modules over $\mSp$ and $\mSu$. As these two last
rings are Euclidean, there exist classical canonical representations
and algorithms to manipulate modules over these rings.

A second approach consists in finding a canonical representative in a
class of modules modulo quasi-isomophism which is amenable to
computations. Such a representative is provided by the \emph{maximal
module} of a $\mS$-module $\M$. It can be defined as the unique free
module in the class of quasi-isomorphism of $\M$.  We present an
algorithm to compute the maximal module associated to a
sub-$\mS$-module of $\mS^d$ which is inspired by a construction of
Cohen, presented in \cite[p.  131]{MR0485768}, to obtain a
classification up to quasi-isomorphism of finitely generated
$\mS$-modules.
%The idea of this construction is to consider the matrix
%of relations of a module and to perform elementary operations
%preserving quasi-isomorphism classes to put this matrix in a certain
%form. 
We can then compose this algorithm with algorithms to compute basic 
operations on free modules in order to compute with representatives up 
to quasi-isomorphisms.

In order to obtain real algorithms (\emph{i.e.} something computable by 
a Turing machine) we have to consider the fact that elements of $\mS$ 
and its localized are not finite. In this paper we consider an approach 
in two steps in order to solve this problem. First, we give the ability 
to Turing machines, to manipulate, by the way of oracles, elements of 
$\mS$, $\mSp$, $\mSu$. More precisely, we suppose given oracles able to 
store elements of the base ring, compute valuation, multiplication, 
addition, inversion, and Euclidean division. We express the complexity 
of an algorithm with oracle by the number of calls to the oracles to 
compute ring operations. Once we have well defined algorithm with 
oracles to compute with modules, we study in a second time the problem 
of turning them into real algorithms.

Much in the same way as for floating point arithmetic, the actual
computations with modules with coefficients in $\mS$ are done with
approximations up to certain $\pi$-adic and $u$-adic precisions. It is
important to ensure that the (truncated) outputs of our algorithms are
correct which means that they do not depend on the $\pi$ or $u$ powers
of the input that we have forgotten. In order to deal with this
precision analysis, it is convenient to consider a generalisation of
the family of ring coefficients $\mS$.  Namely, given $\alpha,\beta$
relatively prime integers, we write $\nu = \beta/\alpha$ and set
$\mSnu = 
\{ \sum a_i u^i \in K[[u]] | v_K(a_i)+\nu i \geq 0,\, \forall i \in
\N\}$. We have $\mS_0=\mS$. In this
paper, we develop a theory of $\mSnu$-modules which encompass modules
over $\mS$ and use it in order to obtain algorithm with complexity
bounds and proof of correctness.

More precisely, we generalize the definition of a maximal module for
finitely generated torsion-free $\mSnu$-modules. Denote by
$\Max^d_{\mSnu}$ the set of maximal sub-$\mSnu$-modules of $\mSnu^d$.
We prove the following theorem (see Theorem \ref{prop:bijection}),
which generalize the above mentioned decomposition:

\begin{theorem}
\label{introtheo:bijection}
The natural map
$$\begin{array}{rcl}
  \Psi \, : \, \Max_{\mSnu}^d & \longrightarrow &
\Free_{\mSpnu}^d \times \Free_{\mSunu}^d \\
\M & \mapsto & (\M_\pi,\M_u).
\end{array}$$
is injective and its image consists of pairs $(A,B)$ such that $A$ and 
$B$ generate the same $\mE$-vector space in $\mE^d$. If a pair $(A,B)$ 
satisfies this condition, its unique preimage under $\Psi$ is given by 
$A \cap B$.
\end{theorem}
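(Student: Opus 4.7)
The plan is to prove the three assertions of the theorem in turn: well-definedness of $\Psi$ together with the necessity of the compatibility condition, injectivity together with the formula for the preimage, and finally surjectivity of $\Psi$ onto the set of compatible pairs. Throughout, I rely on two structural facts about the coefficient rings which should be available from the previous sections: that $\mSpnu$ and $\mSunu$ are principal ideal domains (so that finitely generated torsion-free modules over them are automatically free), and that they sit inside the common overring $\mE$ with the identity $\mSpnu \cap \mSunu = \mSnu$.

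For well-definedness, let $\M \in \Max^d_{\mSnu}$. By the defining property of a maximal module, $\M$ is free over $\mSnu$; tensoring preserves this, so $\M_\pi$ and $\M_u$ are free of the same rank over their respective rings. Both embed canonically into $\M \otimes_{\mSnu} \mE$, which coincides with the $\mE$-subspace of $\mE^d$ generated by $\M$, whence they generate the same $\mE$-subspace. Injectivity of $\Psi$ and the formula $\Psi^{-1}(A,B) = A \cap B$ both reduce to the claim that $\M = \M_\pi \cap \M_u$ inside $\mE^d$. The inclusion $\subseteq$ is immediate; for $\supseteq$, I would choose an $\mSnu$-basis of $\M$, which is also an $\mE$-basis of the common span, and observe that any element of the intersection has coordinates lying in $\mSpnu \cap \mSunu = \mSnu$, and hence belongs to $\M$.

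The main obstacle lies in the surjectivity onto the set of compatible pairs. Given $(A,B)$ sharing an $\mE$-span $V \subseteq \mE^d$, set $\M := A \cap B$, which sits inside $\mSpnu^d \cap \mSunu^d = \mSnu^d$. I must show that $\M$ belongs to $\Max^d_{\mSnu}$ and that $\M_\pi = A$, $\M_u = B$. The inclusions $\M_\pi \subseteq A$ and $\M_u \subseteq B$ are trivial. For the converse $A \subseteq \M_\pi$, given $a \in A$, I would expand $a$ in some $\mSunu$-basis of $B$ with coefficients in $\mE$ and then clear denominators by a suitable power of $\pi$ (which is invertible in $\mSpnu$) to obtain $\pi^k a \in B$; since $\pi^k a$ also lies in $A$, it belongs to $\M$, whence $a \in \M_\pi$. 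A symmetric argument, using now the uniformizer appropriate to $\mSunu$, gives $B \subseteq \M_u$. Once the equalities $\M_\pi = A$ and $\M_u = B$ are in hand, finite generation, freeness, and maximality of $\M$ follow from the corresponding properties of $A$ and $B$ over their respective PIDs, together with the characterization of maximal modules recalled in the introduction.
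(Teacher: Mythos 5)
Your argument rests on the claim that a maximal $\mSnu$-module is free over $\mSnu$, and this is false for $\nu \neq 0$: the paper stresses that $\Max(\M)$ need not be free (only $\Max^d_{\mS_0} = \Free^d_{\mS_0}$ holds), and Theorem \ref{th:main} bounds the number of generators of a maximal submodule of $\mSnu^d$ by $d\cdot(2+\sum a_{2i})$, which exceeds $d$ in general; a rank-one example is $\Nc=\{ x \in \mSnu \mid v_\nu(x) \geq -\delta/\alpha\}$, which may need up to $\alpha$ generators (Lemma \ref{sec3.2:lemma2}). This is fatal to your injectivity argument, which chooses ``an $\mSnu$-basis of $\M$'' and reads off coordinates in $\mSpnu \cap \mSunu = \mSnu$. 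The paper's proof exists precisely to circumvent this obstruction: it proves the component-by-component statement only for free modules over $\mS_0$ (Lemma \ref{lemma:bijection}) and transports it to $\mSnu$ through the base change $\eSnu = \mSnu \otimes_\Ri \Ri[\Rpi]$, over which maximal modules do become free (Lemma \ref{sec1:lempiso}, Proposition \ref{prop:tech}), via the commutative diagram (\ref{sec3.2:diagprobij1}). Your closing sentence, deducing ``freeness and maximality of $\M$'' from the corresponding properties of $A$ and $B$, fails for the same reason.

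The second gap is the alleged symmetry between the two localisations. Your argument for $A \subseteq \M_\pi$ is sound, because every element of $\mE$ lands in $\mSunu$ after multiplication by a power of $\pi$, which is invertible in $\mSpnu$. But the roles of $\mSpnu$ and $\mSunu$ are not interchangeable: $\mSunu$ is a completion, and an element of $\mE$ --- for instance $\sum_{j \geq 1}\pi^j u^{-j}$ when $\nu=0$ --- carries infinitely many negative powers of $u$, so no power of $u^\alpha/\pi^\beta$ (nor of any uniformizer of $\mSunu$) will bring it into $\mSpnu \subset K[[u]]$. Hence ``clearing denominators'' of the $\mE$-coordinates of $b \in B$ with respect to an $\mSpnu$-basis of $A$ does not produce an element of $A$, and the inclusion $B \subseteq \M_u$ needs a different proof. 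The paper instead sandwiches $\M$ between $\sum s_i e_i \mSnu$ and $\sum e_i \mSnu$ for a suitable $\mSunu$-basis $(e_i)$ of $B$ and elements $s_i \in \mSnu$ that are units in $\mSunu$, and concludes $\M_u = B$ from there; finite generation of $A\cap B$ (by a similar sandwich plus noetherianity) and its maximality (via Proposition \ref{prop:qis}, using that $\pi$ is invertible in $\mSpnu$ and $u^\alpha/\pi^\beta$ in $\mSunu$) also require genuine arguments rather than a reference to properties of $A$ and $B$.
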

In the theorem, $\mE$ is a field containing $\mSnu$ and its
localized $\mSpnu$ and $\mSunu$ which is precisely defined in Section
\ref{sec:deffirst}.
We give an algorithm with oracles to compute the maximal 
module associated to a finitely generated torsion-free $\mSnu$-module. 
In general, it is not true that the maximal module of a torsion-free 
$\mSnu$-module is free, although this property holds when $\nu=0$.
Nonetheless, by using the theory of continued fraction, it is possible
to obtain a tight upper bound on the number of generators of a maximal
module embedded in $\mSnu^d$. If $\nu$ is rational, it admits a unique finite
development as a continued fraction that we denote by  $[a_0; a_1,
\ldots, a_n]$ (here, we suppose that $a_n\ne 1$). We can prove the following (see Theorem \ref{th:main}):
\begin{theorem}
Let $\nu = [a_0; a_1, \ldots, a_n]$.
Let $\M$ be a sub-$\mSnu$-module of $\mSnu^d$. Then $\Max(\M)$ is 
generated by at most $d\cdot (2+\sum_{i=1}^{\lceil n/2
\rceil} a_{2i})$ elements.
\end{theorem}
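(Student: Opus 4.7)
The plan is to apply Theorem \ref{introtheo:bijection} to replace $\Max(\M)$ with the pair $(A,B) = \Psi(\Max(\M))$, where $A$ and $B$ are free modules of a common rank $r \leq d$ over $\mSpnu$ and $\mSunu$ respectively, both generating the same $\mE$-subspace $V \subseteq \mE^d$, and $\Max(\M) = A \cap B$ inside $V$. I would then reduce to the rank-one case: since $\mSpnu$ and $\mSunu$ are both principal ideal domains with common fraction field $\mE$, a Smith-type simultaneous reduction of the transition matrix in $GL_r(\mE)$ produces an $\mE$-basis $(e_1, \ldots, e_r)$ of $V$ and scalars $\delta_i \in \mE^\times$ for which $A = \bigoplus_i \mSpnu \cdot e_i$ and $B = \bigoplus_i \mSunu \cdot \delta_i e_i$. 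Intersecting coordinatewise gives
$$\Max(\M) \;=\; \bigoplus_{i=1}^r \bigl(\mSpnu \cap \delta_i \mSunu\bigr)\cdot e_i,$$
so it suffices to prove the announced bound for a single rank-one maximal fractional ideal of $\mE$ and then multiply by $r \leq d$.

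For the rank-one case, I would encode a maximal fractional ideal $I \subset \mE$ by its profile function $\phi_I(i) = \min\{v_K(a_i) : \sum_j a_j u^j \in I\}$. Maximality forces $\phi_I$ to coincide with the lower convex envelope of its graph, subject to the constraint $\phi_I(i) + \nu i \geq 0$, and the extremal slopes of this envelope are $-\nu$ on the left and $0$ on the right. A minimal generating set of $I$ as an $\mSnu$-module is then in bijection with the vertices of the graph of $\phi_I$, so counting generators reduces to counting the admissible rational slopes appearing in $[-\nu, 0]$. The two extremal slopes $-\nu$ and $0$ always occur and account for the $2$ in the bound; the interior slopes are the ones that must be controlled by the continued fraction expansion of $\nu$.

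The main obstacle, and the combinatorial core of the proof, is to bound the number of interior slopes in terms of $\nu = [a_0; a_1, \ldots, a_n]$. The admissible interior slopes of $\phi_I$ are exactly the negatives of the best one-sided rational approximations of $\nu$ from below; by Stern--Brocot / Farey theory these are the even-indexed convergents $p_{2i}/q_{2i}$ together with the intermediate fractions $(p_{2i-2} + j\,p_{2i-1})/(q_{2i-2} + j\,q_{2i-1})$ for $j = 1, \ldots, a_{2i}$ that interpolate between consecutive same-side convergents, while the odd-indexed convergents approximate $\nu$ from above and cannot contribute slopes of the convex polygon, accounting for the asymmetry in the statement. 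Summing the block sizes $a_{2i}$ over $i = 1, \ldots, \lceil n/2 \rceil$ therefore gives the rank-one bound $2 + \sum_{i=1}^{\lceil n/2 \rceil} a_{2i}$, and the overall factor $d$ is picked up from the rank-one reduction. I expect the Stern--Brocot identification of admissible slopes, and the matching of ``from below'' with even-indexed partial quotients, to require the most careful bookkeeping.
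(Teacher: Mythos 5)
Your overall architecture --- reduce to the rank-one case, then count generators of a rank-one module by counting the one-sided best rational approximations of $\nu$ --- is the same as the paper's, and your combinatorial core (vertices of the lower convex envelope, even-indexed convergents together with the $a_{2i}$ intermediate fractions between consecutive same-side convergents, giving $2+\sum a_{2i}$ per component and a factor $d$ overall) is exactly the content of Lemma \ref{sec3.2:lemma2}, Proposition \ref{sec3:propfrac} and its corollary. The genuine gap is the step you dispatch as ``a Smith-type simultaneous reduction of the transition matrix in $GL_r(\mE)$''. Smith normal form diagonalizes by row and column operations over a \emph{single} PID; what you need is a factorization $T=PDQ$ with $P\in GL_r(\mSpnu)$, $Q\in GL_r(\mSunu)$ and $D$ diagonal, i.e.\ a Birkhoff-type double-coset statement for two different subrings of $\mE$. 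That is not a standard fact, and Theorem \ref{introtheo:bijection}, which you cite, only gives $\Max(\M)=A\cap B$; it does not produce a common basis $(e_i)$ with $A=\bigoplus_i\mSpnu\, e_i$ and $B=\bigoplus_i\mSunu\,\delta_i e_i$. This simultaneous diagonalization is precisely the hard structural input of the theorem: the paper obtains it by base-changing to $\eSnu=\mSnu\otimes_\Ri\Ri[\Rpi]$ (where $\mSnu$ becomes isomorphic to $\mS_0$ and Iwasawa's theorem makes maximal modules free), running the MatrixReduction algorithm to exhibit a basis of $\Max(\M\otimes_{\mSnu}\eSnu)$ of the special form $\Rpi^{\delta_i}e_i$ with $e_i\in\mSnu^d$ (Lemma \ref{sec3.2:lemmamax} and Remark \ref{rqbasis}), and then intersecting componentwise with $\mSnu^d$ via Proposition \ref{prop:tech}. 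The statement you assert is true, but as written your argument has a hole exactly where the main new idea is required.

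Two smaller remarks on the rank-one analysis. First, once the decomposition is granted, $\mSpnu\cap\delta_i\mSunu=\{x\in\mSpnu\mid v_\nu(x)\ge v_\nu(\delta_i)\}$ is (up to a power of $\pi$) the paper's module $\Nc$, and the inhomogeneous constraint $\phi_I(i)+\nu i\ge -\delta/\alpha$ introduces the offset $\gamma=\delta/\beta \bmod \alpha$ that forces the paper to count best approximations \emph{relatively to} $\gamma$; your sketch ignores this offset. It does not change the bound, but it is where most of the bookkeeping in Proposition \ref{sec3:propfrac} lives. Second, the passage from ``vertices of the profile'' to ``minimal generating set'' needs the Weierstrass preparation/division argument of Lemma \ref{sec3.2:lemma2} to show that the monomials $\pi^{\beta_i}u^{\alpha_i}$ at those vertices really do generate; this is routine but should not be left implicit.
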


We provide some simple examples to show that a lot of basic operations
that we need in order to compute with modules over $\mSnu$, such as the
computation of the Gauss valuation, are not stable. This means that, in
general, the computation with approximations of the input data does not yield
approximation of the result. This is where it becomes
interesting to use the possibility to change the slope $\nu$ of the
base ring $\mSnu$. In the context of our computation, 
a bigger slope plays the role of a loss of precision in the computation
of an approximation of a module over $\mSnu$. In this direction, we
can prove the following theorem (see Theorem \ref{th:main2} for a
precise statement):

\begin{theorem}
  Let $\M_1$ and $\M_2$ be two finitely generated sub-$\mSnu$-modules
  of $\mSnu^d$ such that $\M_2 \subset 1/\pi^c \M_1$ for a positive
  integer $c$. Let $M_1$ and $M_2$ be the
  matrices with coefficients in $\mSnu$ of generators of $\M_1$ and
  $\M_2$ in the canonical basis of $\mSnu^d$. 
  Suppose we are given approximations $M^r_1$ and $M^r_2$ of $M_1$ and $M_2$
  respectively.
  Then, for a well chosen $\nu' >\nu$,
 there exists a polynomial time algorithm in the length of the
representation of $M^r_1$ and $M^r_2$ to compute a matrix 
$M^r_3$ which is an approximation of the maximal module associated to $(\M_1 \otimes_{\mSnu}
\mSnup) +
(\M_2 \otimes_{\mSnu} \mSnup)$.
\end{theorem}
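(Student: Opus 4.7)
The plan is to reduce the construction of $\Max(\M_3)$, where $\M_3 := (\M_1 \otimes_{\mSnu}\mSnup) + (\M_2 \otimes_{\mSnu}\mSnup)$, to computations over the two Euclidean rings $\mS_{\nu',\pi}$ and $\mS_{\nu',u}$ via the bijection $\Psi$ of Theorem~\ref{introtheo:bijection}. According to that theorem, $\Max(\M_3)$ is determined by the pair $(\M_3 \otimes \mS_{\nu',\pi}, \M_3 \otimes \mS_{\nu',u})$, and recovered inside $\mE^d$ as the intersection of the two free submodules. I would therefore proceed in three stages: first, localize $M_1^r$ and $M_2^r$ at $\pi$ and at $u$; second, compute a basis of the sum on each side using a Hermite-style reduction over the corresponding Euclidean ring; third, reconstruct the maximal module as the intersection of the two free modules so obtained.

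I would then write down an exact (oracle-based) version of this algorithm. Over $\mS_{\nu',\pi}$, the module $(\M_1 + \M_2)\otimes \mS_{\nu',\pi}$ is free of rank at most $d$, so a Hermite reduction on the concatenated matrix $[M_1 \mid M_2]$ produces a basis in polynomially many oracle calls; the same reduction applies over $\mS_{\nu',u}$. The hypothesis $\M_2 \subset \frac{1}{\pi^c}\M_1$ guarantees a common $\pi$-denominator bounded by $c$, which controls both the cost and the size of the reconstruction step $A \cap B$, itself a finite intersection of lattices of bounded index.

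The heart of the argument is the precision analysis, which motivates the slope change. The oracles on $\mSnu$ are not stable under truncation: for instance the Gauss valuation of a truncated series can strictly exceed the true one, and the Euclidean division and normalisation that drive the Hermite reduction rely on this valuation. The remedy is to pass to $\mSnup$ with $\nu' > \nu$: enlarging $\nu$ makes the weights of the forgotten tails strictly larger, so the truncation error can be absorbed as a controlled precision loss. For each oracle operation over $\mSnup$ I would bound this loss by an explicit function of $\nu'-\nu$, multiply by the number of oracle calls (polynomial in $d$, $c$ and the length of $M_1^r$, $M_2^r$), and then choose $\nu'$ so that the aggregate loss stays strictly below the input precision; this pins down an admissible $\nu'$.

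The main obstacle will be rigorously propagating this precision control through $\Psi^{-1}$. One must verify that the approximate bases produced on the $\pi$-side and on the $u$-side genuinely correspond, in the $(\pi,u)$-adic sense used in the paper, to an approximation of $\Max(\M_3)$, and not merely to some free module with the correct reductions. This amounts to a continuity statement for the intersection step of Theorem~\ref{introtheo:bijection} in a slope-aware topology, together with a careful check that the common $\mE$-vector space structure is already determined by the approximate data. Once this is established, the polynomial complexity follows from the polynomial cost of each Euclidean step and from the linearity of the precision bookkeeping.
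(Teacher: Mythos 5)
There is a genuine gap, and it lies exactly where you locate the ``heart of the argument.'' Your plan is to run Hermite reductions over $\mS_{\nu',\pi}$ and $\mS_{\nu',u}$ on the truncated matrices and to control the damage by bounding, for each oracle call, a precision loss by ``an explicit function of $\nu'-\nu$,'' then summing these losses. But the operations driving a Hermite reduction over $\mS_{\nu',\pi}$ --- Gauss valuation, zero tests on remainders, and ultimately gcd computations --- are not merely lossy under truncation: they are \emph{not functions of the truncated data at all}. Section \ref{subsec:finiteprec} of the paper makes this precise: two representatives of the same class modulo $\Ap_f(\infty,2)$ (e.g. $u-1$ and $u-6$ over $\Z_5$) can have gcd equal to $u-1$ or to $1$, so no per-operation error bound, however generous, lets you certify the output of a Euclidean/Hermite reduction from approximations alone. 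The failure mode is a discrete branching (is this remainder zero? which element has smaller valuation?) whose answer depends on the forgotten tail, so the ``linearity of the precision bookkeeping'' you invoke at the end has nothing to apply to. In addition, even granting exact localized data, recovering a generating matrix over $\mSnup$ from the pair $(A,B)$ via $A\cap B$ is itself nontrivial (it is Lemma \ref{sec3.4:lastlemma} combined with MatrixReduction), so your route does not avoid the reduction machinery it was meant to replace.

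The paper's actual mechanism for the slope change is structural rather than quantitative, and this is the idea missing from your proposal. By hypothesis each generator $t$ of $\M_2$ is written $t=\sum\lambda_i e_i$ with $\lambda_i=\sum_j a_j^i u^j\in\pi^{-c}\mSnu$. Choosing $\nu'\geq\nu+c/p_u$ (and $\nu'=\nu+ec/p_u$ to handle all $e$ generators) forces the discarded tails $\sum_{j\geq p_u}a_j^i u^j$ to lie in $\mSnup$, so the corresponding tail vector $t''$ already belongs to $\M_1\otimes_{\mSnu}\mSnup$ and can be dropped without changing $(\M_1\otimes\mSnup)+_\free(t\cdot\mSnup)$. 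What remains is the polynomial part $t'$ of degree $<p_u$, on which valuations, Weierstrass degrees and Euclidean divisions are computed \emph{exactly}; the algorithm AddVector (a variant of MatrixReduction in the $(M,L)$-representation, not of the $(\M_\pi,\M_u)$ localisation) then produces the answer. In short, $\nu'$ is chosen so that the truncation changes nothing modulo $\M_1\otimes\mSnup$, not so that its effect is small; without this observation the unstable operations cannot be tamed.
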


The organisation of the paper is as follows: in \S \ref{sec:arithm}, we 
introduce the rings $\mSnu$, and their basic arithmetic and analytic 
properties. In \S \ref{sec:modules}, we generalize some classical 
results of Iwasawa to the case of finitely generated $\mSnu$-modules and 
then give an algorithm with oracle to compute the maximal module 
associated to a torsion-free $\mSnu$-module and obtain an upper bound on 
the number of generators of a maximal module. Note that \S 
\ref{sec:arithm} and \S \ref{sec:modules}, we only describe algorithms 
with oracles. In \S \ref{sec:precision}, we study the problem of 
$p$-adic and $u$-adic precisions and turn the algorithms with oracles 
obtained in the previous sections into real algorithms.

\section{Arithmetic of the rings  $\mSnu$}\label{sec:arithm} In order to compute with
modules over $\mSnu$ we first have to study the basic arithmetic
properties of their base ring. In this section, we show that
its localized with respect to
$u^\alpha/\pi^\beta$
and $\pi$ becomes Euclidean. We provide algorithms with oracles to compute the
Euclidean division in these rings which will be very useful for our purpose along with their
complexity expressed in term of the number of ring operations. They will 
be turned into real algorithms (\emph{i.e.} working on a real Turing 
machine) in \S \ref{sec:precision} where we study the problem of 
precision of computation in the rings $\mSnu$.

\subsection{Notations}\label{sec:notation}

We fix the notations for the rest of the paper. Let $\Ri$ be a ring 
equipped with a discrete valuation $v_\Ri$, that is a map 
$v_\Ri : \Ri \to \N_{\geq 0} \cup \{+\infty\}$ satisfying the following
conditions:
\begin{itemize}
\item for all $x \in \Ri$, $v_\Ri(x) = +\infty$ if and only if $x = 0$;
\item for all $x \in \Ri$, $v_\Ri(x) = 0$ if and only if $x$ is invertible;
\item for all $x,y \in \Ri$, $v_\Ri(xy) = v_\Ri(x) + v_\Ri(y)$;
\item for all $x,y \in \Ri$, $v_\Ri(x+y) \geq \min (v_\Ri(x), v_\Ri(y))$.
\end{itemize}
Let $a$ be a fixed real number in $(0,1)$. One can define a 
distance $d$ on $\Ri$ by the formula $d(x,y) = a^{v_\Ri(x-y)}$ ($x,y \in \Ri$) 
where we use the convention that $a^{+\infty} = 0$. For the rest of the 
paper, we assume that $\Ri$ is complete with respect to $d$. We recall
that $\Ri$ is a local ring whose maximal ideal is
$\mathfrak{M}=\{ x \in \Ri | v_\Ri(x) > 0 \}$. Up to renormalizing $v_\Ri$, 
it is safe to assume that it is surjective, what we do.
We denote by $\pi$ a \emph{uniformizer} of $\Ri$, 
that is an element of $\Ri$ whose valuation is $1$. Every element $x$ in
$\Ri$ can then be written $x = \pi^r u$ where $r = v_\Ri(x)$ and $u \in \Ri$
is invertible.
Here are several classical examples of such rings $\Ri$:
\begin{itemize}
\item the ring $\Z_p$ of $p$-adic integers equipped with
the usual $p$-adic valuation;
\item more generally, the ring of integers of any finite extension 
of $\Q_p$;
\item for any field $k$, the ring $k[[u]]$ of formal power series with
coefficients in $k$.
\end{itemize}
We now go back to a general $\Ri$. It follows easily from the definition
that the field of fractions of $\Ri$ is just $\Ri[1/\pi]$. Let's denote it
by $K$ and set $\mS = \Ri[[u]]$, the ring of formal series over
$\Ri$. The valuation $v_\Ri$ can be extended uniquely to a valuation
$v_K$ on $K$.

\subsection{Definition and first properties of
$\mSnu$}\label{sec:deffirst}

Denote by $K[[u]]$ the power series ring with coefficients in $K$.  It
is classical to define the Gauss valuation of an element $\sum a_i u^i
\in K[[u]]$ as the smallest $v_K(a_i)$ if it exists. The ring of
elements of $K[[u]]$
with non negative Gauss valuation is nothing but $\Ri[[u]]$.  In this
section, we are
going to consider more generally a family of valuations parametrized by a
slope $\nu \in \Q$ so as to define the subring of $K[[u]]$ of elements with positive
valuation. 

\begin{definition}
Let $\nu \in \Q$.  We define the \emph{Gauss valuation} $v_\nu: K[[u]]
\rightarrow \Q \cup\{+\infty, -\infty \}$ by $v_\nu(x)=+\infty$ if
$x=0$, $v_\nu(\sum a_i u^i)=\min \{ v_K(a_i)+\nu i, i \in \N \}$ if
$x\neq0$ and this minimum exists and $v_\nu(x)=-\infty$ otherwise.
The \emph{Weierstrass degree} of $x$ denoted $\deg^{\nu}_W(x)$ is given by
$\deg^{\nu}_W(0)=-\infty$, $\deg^{\nu}_W(x)=\min\{ i | v_K(a_i)+\nu i = v_\nu(x)\}$ if
$v_\nu(x) \neq -\infty$ and $\deg^{\nu}_W(x)=+\infty$ otherwise. When
no confusion is possible, we will use the notation $\deg_W$ instead of
$\deg^{\nu}_W$.
\end{definition}

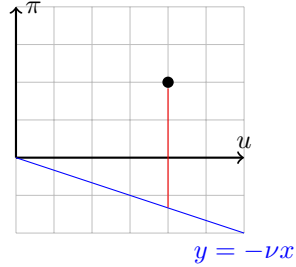
\begin{figure}[h]
\begin{center}
\begin{tikzpicture}
\draw[step=0.5cm,lightgray, very thin] (0,-1) grid (3,2);
\draw[->, thick] (0,0) -- (0,2);
\draw[->, thick] (0,0) -- (3,0);
\draw[color=blue] (0,0) -- (3,-1) node[anchor=north] {$y=-\nu x$};
\node[circle,fill=black,inner sep=1.5pt] (X) at (2,1) {};
\node[right] (p) at (0,2) {$\pi$};
\node[above] (u) at (3,0) {$u$};
\draw[red] (X) -- (2,-0.66);
\end{tikzpicture}
\end{center}
\caption{The Gauss valuation of $\pi^2\cdot u^4$ with $\nu=1/3$ is $10/3$.}
\end{figure}

The following lemma gives some basic properties of $v_\nu$ and
$\deg_W$. In particular, it shows that $v_\nu$ has the usual
properties of a valuation:
\begin{lemma}\label{sec2:lemma0}
For $x,y \in K[[u]]$ we have:
\begin{enumerate}
\item $v_\nu(x)=+\infty$ if and only if $x=0$;
\item $v_\nu(x\cdot y)=v_\nu(x)+v_\nu(y)$;
\item $v_\nu(x+y) \geq \min ( v_\nu(x), v_\nu(y) )$.
\end{enumerate}
Moreover for all $x,y \in K[[u]]$ with finite Gauss valuation,  $\deg_W(x.y)=\deg_W(x)+\deg_W(y)$.
\end{lemma}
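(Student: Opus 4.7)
The plan is to treat the four statements in order, with (2) and (4) forming the technical core and relying on the same key observation.

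For (1), the statement is immediate from the definition: if $x=0$ then $v_\nu(x)=+\infty$ by convention, and conversely if $x=\sum a_i u^i\neq 0$ then some $a_{i_0}$ is nonzero, so $v_K(a_{i_0})+\nu i_0$ is finite and $v_\nu(x)\leq v_K(a_{i_0})+\nu i_0 < +\infty$. For (3), after disposing of the cases where either valuation is $+\infty$ or $-\infty$ (where the inequality is trivial), I would use the non-archimedean property of $v_K$ coefficient-by-coefficient: writing $x=\sum a_i u^i$, $y=\sum b_i u^i$, one has $v_K(a_i+b_i)+\nu i\geq\min(v_K(a_i)+\nu i,\ v_K(b_i)+\nu i)\geq \min(v_\nu(x),v_\nu(y))$ for every $i$. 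A small point to verify is that the set $\{v_K(a_i+b_i)+\nu i\}$ attains its minimum; this follows from the discreteness of $\Z+\nu\Z\subseteq \tfrac{1}{\alpha}\Z$ together with the lower bound just obtained.

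The heart of the lemma is (2). Writing $xy=\sum c_k u^k$ with $c_k=\sum_{i+j=k}a_ib_j$, the non-archimedean inequality applied to each $c_k$ gives $v_K(c_k)+\nu k\geq\min_{i+j=k}\bigl((v_K(a_i)+\nu i)+(v_K(b_j)+\nu j)\bigr)\geq v_\nu(x)+v_\nu(y)$, proving $v_\nu(xy)\geq v_\nu(x)+v_\nu(y)$ (the same discreteness argument as above ensures the minimum defining $v_\nu(xy)$ exists when the right-hand side is finite). For the reverse inequality, set $i_0=\deg_W(x)$, $j_0=\deg_W(y)$ and $k_0=i_0+j_0$. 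I would look at $c_{k_0}$ and show that in the sum $\sum_{i+j=k_0}a_ib_j$, the single term $a_{i_0}b_{j_0}$ strictly dominates all others in the $v_K$-sense: indeed, for any other pair $(i,j)$ with $i+j=k_0$, either $i<i_0$, in which case the minimality of $i_0=\deg_W(x)$ forces $v_K(a_i)+\nu i>v_\nu(x)$, or else $j<j_0$ and symmetrically $v_K(b_j)+\nu j>v_\nu(y)$. In both cases $v_K(a_ib_j)+\nu k_0>v_\nu(x)+v_\nu(y)$, while $v_K(a_{i_0}b_{j_0})+\nu k_0=v_\nu(x)+v_\nu(y)$. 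By the non-archimedean property applied in the form ``strict dominant term'', $v_K(c_{k_0})+\nu k_0=v_\nu(x)+v_\nu(y)$, which yields $v_\nu(xy)\leq v_\nu(x)+v_\nu(y)$ and equality.

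For (4), the same computation already carried out at $k_0$ shows $\deg_W(xy)\leq k_0=\deg_W(x)+\deg_W(y)$. To get equality I would show that no $k<k_0$ contributes: fix such a $k$ and any pair $(i,j)$ with $i+j=k$; since $k<i_0+j_0$, we have either $i<i_0$ or $j<j_0$, and in each case the minimality in the definition of $\deg_W$ gives $v_K(a_i)+\nu i+v_K(b_j)+\nu j>v_\nu(x)+v_\nu(y)$. Hence $v_K(c_k)+\nu k>v_\nu(xy)$, so $k$ does not realize the minimum in the definition of $v_\nu(xy)$, which forces $\deg_W(xy)=k_0$. The main obstacle, and really the only delicate point, is managing the ``strict versus non-strict'' inequalities via the Weierstrass degree in the multiplicativity argument; once that bookkeeping is set up the rest of the lemma reduces to a coefficient-wise application of the ultrametric inequality on $K$.
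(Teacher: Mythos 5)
Your proof is correct and follows essentially the same route as the paper: the coefficient-wise ultrametric bound gives $v_\nu(x\cdot y)\geq v_\nu(x)+v_\nu(y)$, and the strict dominance of the single term $a_{i_0}b_{j_0}$ at index $i_0+j_0=\deg_W(x)+\deg_W(y)$ gives the reverse inequality together with the additivity of $\deg_W$ (the paper leaves (1), (3) and (4) to the reader, which you fill in, including the useful remark that discreteness of $\Z+\nu\Z$ guarantees the minima are attained). The one case you do not address is $v_\nu(x)=-\infty$ with $y\neq 0$ in (2), where the dominant-term argument is unavailable since $\deg_W(x)=+\infty$; the paper disposes of it by applying the finite-valuation result to truncations of $x$ to conclude $v_\nu(x\cdot y)=-\infty$.
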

\begin{proof}
To prove 2., we first suppose that $x =\sum a_i u^i$ and $y=\sum b_i
u^i$ have finite valuation. Let $z=x\cdot y= \sum c_i u^i$. We have
$v_K(c_i)+\nu i = v_K(\sum_{j=0}^i a_j\cdot b_{i-j})+\nu i \geq \min_j \{
v_K(a_j)+ \nu \cdot j + v_K(b_{i-j}) + \nu\cdot (i-j) \} \geq v_\nu(x)+ v_\nu(y)$.
Moreover, by taking $i=\deg_W(x)+\deg_W(y)$ in the previous computation, we
obtain that $v_K(c_{\deg_W(x)+\deg_W(y)})=v_\nu(x)+v_\nu(y)$. If
$v_\nu(x)=-\infty$ and $y \ne 0$, we can apply the previous result to the series
obtained by truncating $x$ up to a certain power to show that
$v_\nu(x\cdot y) = -\infty$. The proof of the rest of the lemma is left to
the reader.
\end{proof}

We let $\mSnu = \{ x \in K[[u]] | v_\nu(x) \geq 0 \}$. By
definition, an element $x \in \mSnu$ can we written as a series $$x
= \sum_{i \in \N} a_i u^i,$$ where $a_i \in K$ and $v_K(a_i) \geq -\nu
i$.

\begin{remark}
It is clear that $\mSnu$ is complete for the valuation $v_\nu$.
Nonetheless, the ring $\mSnu$ is not a valuation ring. In fact, although $v_\nu
(u^\alpha/\pi^\beta)=0$ for $\nu \neq 0$ (resp. $v_\nu(u)=0$ for $\nu
=0$), $u^\alpha/\pi^\beta$ (resp. $u$) is not invertible in $\mSnu$.
\end{remark}

We let $$\mSpnu = \mSnu [1/\pi]=\big\{ \sum_{i \in \N} a_i u^i,
a_i \in K  \, \text{such that} \, v_K(a_i)+\nu i \,
\text{bounded below} \big\}.$$ 
In the same way, it is clear that one can extend the $v_\nu$ valuation
of $\mSnu$ over $\mSnu[\pi^\beta/u^\alpha]$ and we let $\mSunu = \widehat{\mSnu
[\pi^\beta/u^\alpha]}$ where the hat stands for the completion of $\mSnu
[\pi^\beta/u^\alpha]$ with
respect to the topology defined by $v_\nu$. 

Put in another way, $$\mSunu = \big\{ \sum_{i \in \Z} a_i u^i, a_i \in \mSnu
\, \text{and} \, \lim_{ i
\rightarrow -\infty} v_K(a_i)+\nu i = +\infty \big\}.$$
We moreover define
$$\mE = \big\{ \sum_{i \in \Z} a_i u^i, a_i \in K \, v_K(a_i)+\nu i \,
\text{bounded below} \, \text{and} \lim_{i \rightarrow -\infty}
v_K(a_i)+\nu i =+\infty \big\}.$$

We have the following commutative diagram of inclusions:
\begin{equation}\label{sec2.1:diaginclus}
\raisebox{-1cm}{
\begin{tikzpicture}[normal line/.style={->}, font=\scriptsize]
\matrix(m) [ampersand replacement=\&, matrix of math nodes, row sep =
  1em, column sep =
2.5em]
{  \& \mSpnu \&  \\
  \mSnu   \&  \& \mE \\
\& \mSunu \& \\};
\path[normal line]
(m-2-1) edge node[auto] {} (m-1-2)
(m-2-1) edge node[auto] {} (m-3-2)
(m-1-2) edge node[auto] {} (m-2-3)
(m-3-2) edge node[auto] {} (m-2-3);
\end{tikzpicture}
}
\end{equation}

As $\mSpnu$ is a subring of $K[[u]]$, it is equipped with the $v_\nu$
valuation and the Weierstrass degree associated to $v_\nu$. Moreover,
one can extend, in an obvious manner, the definition of $v_\nu$ and
the Weierstrass degree for $\mSunu$ and $\mE$. 

We can interpret the ring $\mSnu$ in terms of the analytic functions
on the $\pi$-adic disc.  In order to explain this, for $\nu \in \Q$,
we consider the open disk $D_\nu = \{ x \in K | v_K(x) > \nu \}$.
Denote by $\mO_\nu$ the ring of convergent series $\mO_\nu = \{
\sum_{i \in \N} a_i u^i | a_i \in K, \liminf_{i \rightarrow +\infty}
\frac{v_K(a_i)}{i} \geq -\nu \}$ in the disk $D_\nu$. It is clear that
$\mSpnu$ is exactly the set $\{f \in K[[u]] |\,
v_K(f(x))\,\text{bounded below on}\, D_\nu \}$ and $\mSnu$ can be
described as $\{f \in K[[u]]| \, v_K(f(x))\, \text{bounded below by 0
on}\, D_\nu \}$.  Thus, there are obvious inclusions $\mSnu \subset
\mSpnu \subset \mO_\nu$ but one should beware of the fact that the
last
inclusion is strict. Indeed for instance, for $\Ri=\Z_p$, $\nu=0$ the
series $\sum_{i > 0} \frac{u^i}{i}$ which defines the function
$\log(1-u)$ is convergent in the unity disk but is obviously not in
$S_{0,\pi}$ since $v_\pi(1/i)$ has no lower bound. Assuming that
$\nu$ is rational (what we do), the following proposition gives another
characterisation of elements of $\mO_\nu$ that lies in $\mSpnu$.

\begin{proposition}\label{prop:analytic} An element $x \in \mO_\nu$ is in $\mSpnu$ if and
only if $x$ has only a finite number of zeros in $\mO_\nu$.
\end{proposition}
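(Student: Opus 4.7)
The plan is to translate both the algebraic condition ``$x \in \mSpnu$'' and the analytic condition ``$x$ has only finitely many zeros in $D_\nu$'' into the same combinatorial statement about the Newton polygon of $x$. Write $x = \sum_{i \geq i_0} a_i u^i \in \mO_\nu$ with $a_{i_0} \neq 0$ and form the Newton polygon $N(x)$, defined as the lower convex hull of the points $(i, v_K(a_i))$ for $a_i \neq 0$. Since $\liminf v_K(a_i)/i \geq -\nu$, this hull consists of a (possibly infinite) sequence of segments with strictly increasing slopes $\lambda_1 < \lambda_2 < \cdots$ and finite horizontal lengths $\ell_1, \ell_2, \ldots$, whose limiting slope is at most $-\nu$.

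First I would record the standard dictionary for convergent power series on the open $\pi$-adic disk $D_\nu$: each slope $\lambda_j$ of $N(x)$ which is strictly less than $-\nu$ contributes exactly $\ell_j$ zeros of $x$ in $D_\nu$, each of $v_K$-valuation $-\lambda_j > \nu$. This is the non-archimedean Weierstrass/Newton factorization on an open disk; one may either quote it or deduce it by iteratively factoring off the zero of largest valuation and checking that the resulting series still lies in $\mO_\nu$. Next I would observe that $x \in \mSpnu$ exactly when there is a constant $c$ with $v_K(a_i) + \nu i \geq -c$ for all $i$, which geometrically means that every point $(i, v_K(a_i))$ lies on or above the line $y = -\nu i - c$; since $N(x)$ is the lower convex hull of these points, this is equivalent to the whole polygon lying above that line.

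The proof then splits into two short arguments. For ($\Rightarrow$), assume $x \in \mSpnu$ and suppose for contradiction that $x$ has infinitely many zeros in $D_\nu$, i.e.\ infinitely many indices $j$ with $\lambda_j < -\nu$. Evaluating the polygon's height at the vertex after the $m$-th segment gives
\[
v_K(a_{i_0}) + \sum_{j=1}^{m} \lambda_j \ell_j + \nu\Bigl(i_0 + \sum_{j=1}^{m} \ell_j\Bigr) \;=\; C + \sum_{j=1}^{m} (\lambda_j + \nu)\ell_j
\]
for a constant $C$ depending only on $i_0$ and $v_K(a_{i_0})$. Each term $(\lambda_j + \nu)\ell_j$ is strictly negative, so the right-hand side tends to $-\infty$ as $m \to \infty$, contradicting the bound from $x \in \mSpnu$. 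For ($\Leftarrow$), assume only finitely many slopes $\lambda_1, \ldots, \lambda_s$ of $N(x)$ are strictly less than $-\nu$. Past the vertex at horizontal position $i^{*} = i_0 + \sum_{j \leq s} \ell_j$, every segment has slope $\geq -\nu$, so for $i \geq i^{*}$ the height $h(i)$ of $N(x)$ satisfies $h(i) \geq h(i^{*}) - \nu(i - i^{*})$; hence $h(i) + \nu i$ is bounded below, and since $v_K(a_i) \geq h(i)$, the same holds for $v_K(a_i) + \nu i$, i.e.\ $x \in \mSpnu$.

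The main obstacle is the geometric dictionary invoked in the first step: showing that every nonzero $x \in \mO_\nu$ has, counted with multiplicity, exactly $\sum_{\lambda_j < -\nu} \ell_j$ zeros inside $D_\nu$. On a closed $p$-adic disk this is the classical Weierstrass preparation/Strassmann theorem; on the open disk $D_\nu$ one has to argue more carefully, either by exhausting $D_\nu$ by closed subdisks of radius approaching $\nu$ and applying Weierstrass preparation on each, or by directly constructing the Weierstrass factorisation and verifying convergence in $\mO_\nu$. Once this is established, the rest of the proof reduces to the elementary Newton-polygon bookkeeping above.
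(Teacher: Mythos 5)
Your overall strategy is the same as the paper's: translate both conditions into statements about the Newton polygon of $x$, using the standard dictionary that the zeros of $x$ in $D_\nu$ are counted by the total horizontal length of the segments of slope $<-\nu$. Your ($\Leftarrow$) direction is fine and matches the paper's. But there is a genuine gap in your ($\Rightarrow$) direction, at exactly the point where the hypothesis that $\nu$ is rational must enter. You argue that since each term $(\lambda_j+\nu)\ell_j$ is strictly negative, the partial sums $\sum_{j\le m}(\lambda_j+\nu)\ell_j$ tend to $-\infty$. That inference is false in general: a series of strictly negative terms can converge (the slopes $\lambda_j$ increase to $-\nu$, so the terms may tend to $0$), and if the sum converged to a finite limit $-L$ the heights $h(i_m)+\nu i_m$ would stay bounded below by $C-L$, yielding no contradiction with $x\in\mSpnu$.

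The repair is the discreteness observation that the paper makes explicitly: the vertical drop $\lambda_j\ell_j$ over each segment is an integer (the vertices of the Newton polygon have integer coordinates, since $v_K$ is normalized to be surjective onto $\Z$), so $(\lambda_j+\nu)\ell_j\in\Z+\nu\Z=\tfrac1\alpha\Z$ where $\nu=\beta/\alpha$; being strictly negative, each term is $\le -1/\alpha$, and only then does the sum diverge. Equivalently, as in the paper, the set $\{v_K(a_i)+\nu i\}$ is a bounded-below subset of the discrete group $\Z+\nu\Z$, hence attains its minimum, which forces all but finitely many slopes to be $\ge-\nu$. This is not a cosmetic point: for irrational $\nu$ the group $\Z+\nu\Z$ is dense and one can arrange infinitely many segments of slope $<-\nu$ with $\sum(\lambda_j+\nu)\ell_j$ convergent, so the statement itself would fail. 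Your proof never uses the rationality of $\nu$, which is the signal that something is missing.
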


\begin{proof}
Let $x \in \mO_\nu$. The number of zeros of $x \in D_\nu$ is equal to
the length of the interval above which the Newton polygon of $x$ has a
slope $<-\nu$. If this length is finite, it is clear that $v_p(a_i)$
is bounded below by a line of the form $-\nu i +c$ with $c$ a constant
and as a consequence is an element of $\mSpnu$.

Conversely, suppose that $x \in \mSpnu$. This means that
$v_p(a_i)+\nu i$ is bounded below and is contained in $\Z+\nu \Z$
which is a discrete subgroup of $\R$ (as $\nu$ is rational).
Thus, the set $\{v_p(a_i)+\nu i,i \in N \}$ reaches a
minimum for a certain index $i_0$. This means that for all $i > i_0$,
the slope of the Newton polygon of $x$ is greater than $-\nu$ and $x$
has a finite number of zeros in $D_\nu$.
\end{proof}

We end up this section, by remarking that up to an extension of the
base ring $\Ri$ all the $\mSnu$'s are isomorphic to a $\mS_0$. Indeed,
write $\nu=\beta/\alpha$ with $\alpha,\beta$ relatively prime numbers
and let $\Rpi$, in an algebraic closure of $K$, be such that
$\Rpi^\alpha=\pi$.  Let $\Ri'=\Ri[\Rpi]$, $K'$ be the fraction field
of $\Ri'$ (and a finite extension of $K$). The valuation on $\Ri$
extends uniquely on $\Ri'$ by setting $v_{K'}(\Rpi)=1/\alpha$.  For
$\mu =0,\nu$, let ${\mS_\mu}'=\mS_\mu \otimes_{\Ri} \Ri'$.  The
valuation $v_{K'}$ defines a Gauss valuation on ${\mS_\mu}'$ that we
denote also by $v_\mu$.

\begin{lemma}\label{sec1:lempiso}
  Keeping the notations from above, the morphism of ring $\rho : {\mS_0}'
  \rightarrow {\eSnu}$, defined by $\rho(1)=1$ and
  $\rho(u)=\frac{u}{\Rpi^{\beta}}$
  is an isomorphism. Moreover, if $x \in {\mS_0}'$ we have
  $v_0(x)=v_\nu(\rho(x))$ and
  $\deg^{0}_W(x)=\deg^{\nu}_W(\rho(x))$.
\end{lemma}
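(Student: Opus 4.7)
The plan is to check first that $\rho$ is a well-defined ring homomorphism, then to establish the two identities on valuations and Weierstrass degrees by a direct computation on coefficients, and finally to deduce bijectivity — injectivity from the first identity, surjectivity by writing down an explicit preimage.

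For well-definedness, an element $x = \sum_{i \in \N} a_i u^i \in {\mS_0}' = \Ri'[[u]]$ has $a_i \in \Ri'$, hence $v_{K'}(a_i) \geq 0$. Formally substituting $u \mapsto u/\Rpi^\beta$ produces $\rho(x) = \sum_{i \in \N} a_i \Rpi^{-i\beta}\, u^i$; the $i$-th coefficient has $K'$-valuation $v_{K'}(a_i) - i\beta/\alpha$, and adding $\nu i = (\beta/\alpha)\,i$ gives back $v_{K'}(a_i) \geq 0$, so $\rho(x) \in \eSnu$. Compatibility of $\rho$ with addition and multiplication is the standard fact that substituting $u$ by a fixed element commutes with Cauchy multiplication of power series, so no subtlety beyond this boundedness estimate arises.

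The key identity is exactly the cancellation just performed: for every $i \in \N$,
$$v_{K'}(a_i \Rpi^{-i\beta}) + \nu i \;=\; v_{K'}(a_i) - \frac{i\beta}{\alpha} + \frac{\beta}{\alpha}\,i \;=\; v_{K'}(a_i).$$
Taking the infimum over $i$ yields $v_\nu(\rho(x)) = v_0(x)$, and because the same indices realize the minimum on both sides, $\deg_W^0(x) = \deg_W^\nu(\rho(x))$ as well. Injectivity follows at once: if $\rho(x) = 0$ then $v_0(x) = v_\nu(\rho(x)) = +\infty$, so $x = 0$.

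For surjectivity, an arbitrary $y \in \eSnu$ has the form $y = \sum_{i \in \N} b_i u^i$ with $b_i \in K'$ satisfying $v_{K'}(b_i) + \nu i \geq 0$. Set $a_i := b_i \Rpi^{i\beta}$; then $v_{K'}(a_i) = v_{K'}(b_i) + i\beta/\alpha \geq 0$, so $x := \sum_i a_i u^i$ lies in $\Ri'[[u]] = {\mS_0}'$ and by construction $\rho(x) = y$. There is essentially no serious obstacle in this argument: everything rests on the single identity $-i\beta/\alpha + \nu i = 0$, which is nothing more than the choice of normalization $\nu = \beta/\alpha$.
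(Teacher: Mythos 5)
Your proof is correct and follows the same route as the paper, which simply rewrites $\eSnu = \{\sum a_i (u/\Rpi^\beta)^i \mid v_{K'}(a_i) \geq 0\}$ using the cancellation $-i\beta/\alpha + \nu i = 0$ and declares the rest "an easy verification." You have merely spelled out the details (well-definedness, the coefficientwise valuation identity, and the explicit inverse) that the paper leaves implicit.
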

\begin{proof}
  By definition, ${\eSnu}=\{ \sum a_i u^i | v_{K'}(a_i)+\nu i
  \geq 0 \}=\{ \sum a_i (u/\Rpi^{\beta})^i | v_{K'}(a_i) \geq 0 \}$
  from which it is clear that $\rho$ is an isomorphism. The rest of
  the lemma is an easy verification.
\end{proof}

\subsection{Division in $\mSnu$}

The Weierstrass degree allows us to describe an Euclidean division in
$\mSnu$. Although, the existence of such a division is classical (see
for instance \cite{MR0485768}) at least over $\mS_0=\Ri[[u]]$, we give
here a proof for all $\nu$ which provides an algorithm with oracles
to compute the Euclidean division. 

In order to study divisibility in $\mSnu$, we have a first result:
\begin{lemma}\label{sec1:lemma2}
Let $x,z\in \mSnu$. We suppose that $\deg_W(x)=0$ then there exists $y
\in \mSnu$ such that $x.y=z$ if and only if $v_\nu(x) \leq
v_\nu(z)$. 
\end{lemma}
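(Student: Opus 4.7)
The plan is to split the equivalence. The forward direction is immediate from the multiplicativity of $v_\nu$ established in Lemma \ref{sec2:lemma0}: if $xy = z$ with $y \in \mSnu$, then $v_\nu(z) = v_\nu(x) + v_\nu(y) \geq v_\nu(x)$ because $v_\nu(y) \geq 0$.

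For the converse I would construct the candidate quotient $y = z/x$ directly inside $\mSnu$. Writing $x = \sum_{i \geq 0} a_i u^i$, the hypothesis $\deg_W(x) = 0$ forces $v_K(a_0) = v_\nu(x)$, so in particular $a_0 \neq 0$, and I can factor $x = a_0 (1 + w)$ with $w := \sum_{i \geq 1} (a_i/a_0)\, u^i$. The inequality $v_K(a_i) + \nu i \geq v_\nu(x) = v_K(a_0)$ (for every $i$) gives $v_\nu(w) \geq 0$, so $w \in \mSnu$. The natural candidate is then $y := a_0^{-1} (1+w)^{-1} z$, whose formal Gauss valuation is $v_\nu(z) - v_K(a_0) = v_\nu(z) - v_\nu(x) \geq 0$ by hypothesis. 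Everything therefore reduces to showing that $1 + w$ is a unit in $\mSnu$.

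This is the main obstacle: one cannot simply invoke convergence of the geometric series $\sum (-w)^n$ in the $v_\nu$-topology, because $v_\nu(w)$ may equal $0$ (e.g.\ $\nu = 0$ with $x = 1 + u$). Instead I would invert $1+w$ coefficient by coefficient. Writing $w = \sum_{j \geq 1} c_j u^j$ and looking for $(1+w)^{-1} = \sum_{k \geq 0} d_k u^k$ in $K[[u]]$, the identity $(1+w)\cdot (1+w)^{-1} = 1$ determines the $d_k$ by $d_0 = 1$ and the recurrence $d_k = -\sum_{j=1}^{k} c_j d_{k-j}$. An induction on $k$ using $v_K(c_j) + \nu j \geq 0$, the analogous bound at rank $<k$, and the ultrametric inequality gives $v_K(d_k) + \nu k \geq 0$ for all $k$. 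Hence $(1+w)^{-1} \in \mSnu$, the element $y := a_0^{-1}(1+w)^{-1} z$ lies in $\mSnu$ and satisfies $xy = z$, completing the proof.
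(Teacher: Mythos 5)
Your proof is correct and follows essentially the same route as the paper: the forward direction via multiplicativity of $v_\nu$, and the converse via a coefficient-by-coefficient induction exploiting $v_K(a_i)+\nu i \geq v_K(a_0)=v_\nu(x)$ together with the ultrametric inequality. The only (cosmetic) difference is that you first normalize $x=a_0(1+w)$ and invert $1+w$ before multiplying by $z$, whereas the paper runs the same induction directly on the coefficients of $y=z/x$; the underlying recurrence and estimates are identical.
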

\begin{proof}
We suppose that $\deg_W(x)=0$.  If there exists $y \in \mSnu$ such
that $x\cdot y=z$ then clearly $v_\nu(x) \leq v_\nu(z)$.  Reciprocally, 
we suppose that $v_\nu(x) \leq v_\nu(z)$. Write $x=\sum_{i \in \N} a_i
u^i$ and $z=\sum_{i \in \N} c_i u^i$. 
Since $a_0$ is invertible in $K$ there exists $y\in K[[u]]$ such that $x.y=z$.
We have to prove that $v_\nu(y)\geq
0$. For this, write  $y=\sum_{i \in \N} b_i u^i$. We have $v_K(b_0) =v_K(c_0)-v_K(a_0) \geq 0$ by hypothesis. Then,
for $j \geq 1$, we prove by induction that $v_K(b_j)+\nu j \geq 0$. We
have $b_j=a_0^{-1}\cdot c_j-a_0^{-1} \sum_{i=1}^j a_i\cdot b_{j-i}$.  But
$v_K(a_0^{-1}\cdot c_j) + \nu j \geq v_\nu(z) - v_\nu(x) \geq 0$ because
$\deg_W(x)=0$.
Moreover, for $i=1\ldots j$, $v_K(a_0^{-1}\cdot a_i\cdot b_{j-i})+\nu
j=v_K(a_i)+\nu i-v_\nu(x)+v_K(b_{j-i})+\nu (j-i)$. But by definition
$v_K(a_i)+\nu i -v_\nu(x) \geq 0$ and by the induction hypothesis
$v_K(b_{j-i})+\nu (j-i)\geq 0$.  Therefore, $v_K(b_j)+\nu j \geq 0$
and we are done.
\end{proof}
Applying Lemma \ref{sec1:lemma2} to $z=1$, we get
\begin{corollary}\label{sec2:cor1}
Let $x=\sum_{i \in \N} a_i x^i \in \mSnu$, then $x$ is invertible in
$\mSnu$ if and only if $\deg_W(x)=0$ and $v_\nu(x)=0$. 
\end{corollary}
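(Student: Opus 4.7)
The plan is to prove the two implications separately, with the backward direction reduced immediately to Lemma \ref{sec1:lemma2} and the forward direction handled by the multiplicativity statements in Lemma \ref{sec2:lemma0}.

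For the ``$\Leftarrow$'' direction, I would apply Lemma \ref{sec1:lemma2} with $z = 1$. The hypothesis $\deg_W(x) = 0$ is exactly the standing assumption of that lemma, and $v_\nu(x) = 0 = v_\nu(1)$ verifies the inequality $v_\nu(x) \leq v_\nu(z)$. The lemma then produces $y \in \mSnu$ with $x \cdot y = 1$, which is precisely invertibility.

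For the ``$\Rightarrow$'' direction, suppose $x$ is invertible in $\mSnu$, and write $xy = 1$ with $y \in \mSnu$. By part (2) of Lemma \ref{sec2:lemma0}, $v_\nu(x) + v_\nu(y) = v_\nu(1) = 0$; since both terms are nonnegative (as $x,y \in \mSnu$), each vanishes, giving $v_\nu(x) = 0$. Similarly, the multiplicativity of the Weierstrass degree (last clause of Lemma \ref{sec2:lemma0}, applied to elements of finite Gauss valuation, which $x$ and $y$ now are) yields $\deg_W(x) + \deg_W(y) = \deg_W(1) = 0$, so $\deg_W(x) = 0$ since Weierstrass degrees of nonzero elements of $\mSnu$ are nonnegative integers.

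There is no real obstacle here; the only care needed is a brief remark that the Weierstrass degree of any nonzero element of $\mSnu$ is a nonnegative integer (immediate from its definition as a minimum of indices $i \in \N$ at which the finite value $v_\nu(x) \geq 0$ is achieved), which is what makes the additivity $\deg_W(x) + \deg_W(y) = 0$ force both terms to vanish.
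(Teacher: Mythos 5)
Your proof is correct and follows the same route as the paper, which simply notes that the corollary is obtained by applying Lemma \ref{sec1:lemma2} to $z=1$; the forward direction you spell out via the multiplicativity of $v_\nu$ and $\deg_W$ from Lemma \ref{sec2:lemma0} is exactly the implicit half of the paper's argument. No gaps.
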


We note that the corollary implies that $\mSnu$ is a local ring.  Next, we
introduce the following notations: for $x = \sum_{i \in \N} a_i u^i
\in \mSnu$ and $d$ a positive integer, we let $\Hi(x,d)=\sum_{i \geq
d} a_i u^i$ and $\Lo(x,d)=\sum_{i=0}^{d-1} a_i u^i$. It is clear that
$x=\Lo(x,d)+\Hi(x,d)$.

\begin{proposition}\label{sec2:prop1}
Let $x,y \in \mSnu$. Suppose that $v_\nu(y) \geq v_\nu(x)$ then
there exist a unique couple $(q,r) \in \mSnu \times (K[u] \cap \mSnu)$ such that 
$\deg(r) < \deg_W(x)$ and $y=q\cdot x+r$.
\end{proposition}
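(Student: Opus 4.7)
My plan is to prove uniqueness by a short Weierstrass-degree argument, and existence via a Cohen-style iterative scheme in $\mSnu$ that uses Lemma \ref{sec1:lemma2} to ``divide by the top'' at each step.

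For uniqueness, I would suppose $qx + r = q'x + r'$ and rearrange to $(q-q')x = r' - r$. If the two sides are nonzero, the multiplicativity in Lemma \ref{sec2:lemma0} gives $\deg_W((q-q')x) \geq d := \deg_W(x)$, while $r' - r$ is a nonzero polynomial of degree $< d$ lying in $\mSnu$, hence has Weierstrass degree $< d$---a contradiction. So both sides vanish and $q=q'$, $r=r'$.

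For existence, I would write $x = P + u^d G$ with $P := \Lo(x, d)$ of degree $< d$ and $G := u^{-d}\Hi(x, d) \in \mSnu$. From the definition of $\deg_W$, one reads off $\deg_W(G) = 0$, $v_\nu(G) = v_\nu(x) - \nu d$, and---crucially---$v_\nu(P) > v_\nu(x)$, the strictness coming from the fact that $d$ is the \emph{smallest} index achieving the minimum defining $v_\nu(x)$. Setting $y_0 := y$, I would iterate as follows: assuming $v_\nu(y_k) \geq v_\nu(x)$, decompose $y_k = \Lo(y_k, d) + u^d \tilde y_k$. Since $v_\nu(\tilde y_k) \geq v_\nu(y_k) - \nu d \geq v_\nu(G)$, Lemma \ref{sec1:lemma2} supplies a unique $q_k \in \mSnu$ with $G q_k = \tilde y_k$; then $y_k - q_k x = \Lo(y_k, d) - q_k P$, which I split into its degree-$<d$ polynomial part $r_k := \Lo(y_k, d) - \Lo(q_k P, d)$ and a high part, defining $y_{k+1} := -\Hi(q_k P, d)$ so that $y_k = q_k x + r_k + y_{k+1}$.

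Convergence will follow from combining $v_\nu(q_k) = v_\nu(\tilde y_k) - v_\nu(G) \geq v_\nu(y_k) - v_\nu(x)$ with $v_\nu(y_{k+1}) \geq v_\nu(q_k) + v_\nu(P)$ to obtain $v_\nu(y_{k+1}) \geq v_\nu(y_k) + \delta$, where $\delta := v_\nu(P) - v_\nu(x) > 0$. Because $\nu = \beta/\alpha$ is rational, $v_\nu$ takes values in $\tfrac{1}{\alpha}\Z$, so $\delta \geq 1/\alpha$; the inductive hypothesis $v_\nu(y_k) \geq v_\nu(x)$ then propagates, and $v_\nu(y_k), v_\nu(q_k), v_\nu(r_k) \to +\infty$. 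Completeness of $\mSnu$ for $v_\nu$ guarantees that $q := \sum_k q_k$ and $r := \sum_k r_k$ converge in $\mSnu$, and the latter lies in $K[u] \cap \mSnu$ with degree $< d$; telescoping the recursion yields $y = qx + r$.

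The main subtlety I anticipate is extracting the strict inequality $v_\nu(P) > v_\nu(x)$ from the minimality of $d$, since this is what drives the scheme forward; once it is in hand, the rationality of $\nu$ promotes it to a uniform per-step gain and the remaining argument is bookkeeping with the $\Lo/\Hi$ decomposition. The case $d = 0$ degenerates cleanly---$P$ is empty, $r$ is forced to $0$, and the statement reduces to Lemma \ref{sec1:lemma2} applied to $x = G$.
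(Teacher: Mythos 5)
Your proof is correct and follows essentially the same route as the paper's: the same iteration dividing the high part $\Hi(y_k,d)$ by $\Hi(x,d)$, the same per-step valuation gain $\delta = v_\nu(\Lo(x,d)) - v_\nu(x) > 0$ driving convergence by completeness, and the same Weierstrass-degree argument for uniqueness; your peeling off of the polynomial piece $r_k$ at each step is merely a different bookkeeping of the paper's single remainder sequence $r_{i+1} = \Lo(r_i,d) - \frac{\Hi(r_i,d)}{\Hi(x,d)}\Lo(x,d)$. One small inaccuracy: $G = u^{-d}\Hi(x,d)$ need not lie in $\mSnu$ when $\nu d > v_\nu(x)$ (as your own formula $v_\nu(G) = v_\nu(x) - \nu d$ shows), so Lemma \ref{sec1:lemma2} does not literally apply to the pair $(G,\tilde y_k)$; however its proof uses only $\deg_W(G)=0$ and $v_\nu(\tilde y_k) \geq v_\nu(G)$, so the conclusion $q_k \in \mSnu$ still holds --- the paper glosses over the very same point by invoking the lemma for $\Hi(x,d)$ itself, whose Weierstrass degree is $d$, not $0$.
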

\begin{proof}
First, we prove the existance of $(q,r)$.
  Let $d=\deg_W(x)$, we consider the sequences 
$(q_i)$ and $(r_i)$ defined by $q_0=0$ and $r_0=y$ and
\begin{equation}\label{eq:euclidean}
q_{i+1}=q_i + \frac{\Hi(r_i,d)}{\Hi(x,d)}, r_{i+1}=r_i -
\frac{\Hi(r_i,d)}{\Hi(x,d)}\cdot x.
\end{equation}

We are going to prove by induction that $q_i$ and $r_i$ are convergent
sequences (for the $v_\nu$ valuation) of elements of $\mSnu$. 
Let $e=v_\nu(\Lo(x,d))-v_\nu(\Hi(x,d))>0$.
Our
induction hypothesis is that $q_i$ and
$r_i$ are elements of $\mSnu$, that $v_\nu(\Hi(r_i,d)) \geq
e\cdot i+v_\nu(\Hi(y,d))$ and that $y=q_i\cdot x+r_i$. It is clearly true for $i=0$.

By the induction hypothesis, we have $v_\nu(\Hi(r_i,d)) \geq
v_\nu(\Hi(y,d))$ and by hypothesis $v_\nu(\Hi(y,d)) \geq v_\nu(y) \geq
v_\nu(x)=v_\nu(\Hi(x,d))$ so that $v_\nu(\Hi(r_i,d)) \geq
v_\nu(\Hi(x,d))$.
Applying Lemma \ref{sec1:lemma2}, we obtain
$\frac{\Hi(r_i,d)}{\Hi(x,d)} \in \mSnu$ and then $q_{i+1},r_{i+1} \in
\mSnu$. Next writing $x=\Hi(x,d)+\Lo(x,d)$, we get
\begin{equation}\label{sec2:prop1:eq1}
r_{i+1}=\Lo(r_i,d)-\frac{\Hi(r_i,d)}{\Hi(x,d)}\cdot \Lo(x,d).
\end{equation}
Applying Lemma \ref{sec2:lemma0}, we obtain that
$v_\nu(\Hi(r_{i+1},d))\geq v_\nu(\Hi(r_i,d))+v_\nu(\Lo(x,d))-v_\nu(\Hi(x,d))$.
Using the induction hypothesis, we get that $v_\nu(\Hi(r_{i+1},d))\geq
e\cdot (i+1)+v_\nu(\Hi(y,d))$. Finally, using the hypothesis that 
$y=q_i\cdot x+r_i$, we immediately check using (\ref{eq:euclidean})
that $y=q_{i+1}\cdot x+r_{i+1}$.

From the induction, we deduce that $q_i$ and $r_i$ are convergent
sequences of $\mSnu$ for the $v_\nu$ valuation.  In fact, we have
$q_{i+1}-q_i=\frac{\Hi(r_i,d)}{\Hi(x,d)}$ so that $v_\nu(q_{i+1}-q_i)
= v_\nu(\Hi(r_i,d)) - v_\nu(\Hi(x,d))\geq e\cdot
i+v_\nu(\Hi(y,d))-v_\nu(\Hi(x,d))\geq e\cdot i$. The same argument
works for $r_i$.  Denote by $q$ and $r$ the limits.  As for all $i \in
\N$, $y=q_i\cdot x+r_i$, we have $y=q\cdot x+r$. Moreover, since
$\Hi(r_i,d)\geq e\cdot i$, we have $\Hi(r,d)=0$, so that $r\in K[u]$
and $\deg(r) < \deg_W(x)$.

We prove the unicity of $(q,r)$. Let $(q',r') \in \mSnu \times (K[u]
\cap \mSnu)$ such that $y=q'\cdot x+r'$. Then $(q-q')\cdot x=r'-r$.
We have $\deg_W((q-q')\cdot x)=\deg_W(r'-r) < \deg_W(x)$ which is only
possible if $q=q'$ and $r=r'$.
\end{proof}

From the proof of Proposition \ref{sec2:prop1}, we deduce Algorithm
\ref{algo1} to compute from the knowledge of $x,y$, the elements
$q',r' \in \mSnu$ such that $v_\nu(q-q') \geq prec$  and $v_\nu(r-r')
\geq prec$. Furthermore, by the proof of the proposition, the number of
iterations of the while loop is bounded by $\lceil prec/e \rceil$. We
deduce that Algorithm \ref{algo1} needs one inversion and $3\cdot \lceil
prec/e \rceil$ multiplications in $\mSnu$.

\begin{algorithm}\label{algo1}
\SetKwInOut{Input}{input}\SetKwInOut{Output}{output}

\Input{$x,y \in \mSnu$ with $v_\nu(y) \geq v_\nu(x)$, $prec \in \N$}
\Output{$q,r \in \mSnu$ such that $y=q\cdot x+r$ and $v_\nu(\Hi(r,\deg_W(x))) \geq prec$}
\BlankLine
$q \leftarrow 0$\;
$r \leftarrow y$\;
$d \leftarrow \deg_W(x)$\;
\While{$v_\nu(\Hi(r,d))<prec$}{
$q \la q+\frac{\Hi(r,d)}{\Hi(x,d)}$\;
$r \la r-\frac{\Hi(r,d)}{\Hi(x,d)}\cdot x$\;
}
\Return $q,r$\;
\caption{EuclieanDivision}\label{algo:euclidean}
\end{algorithm}

Now, let $x \in \mSnu$, following \cite{MR0485768} we say that $x$ is
\emph{distinguished} if $v_\nu(x)=0$. With this definition, we can state the
classical Weierstrass preparation theorem:
\begin{corollary}[Weierstrass preparation]\label{cor:weierstrass}
Let $x \in \mSnu$ be a distinguished element and let $d=\deg_W(x)$.
Then we can write 
$x=q\cdot h$, where $q \in \mSnu$ is an invertible element and $h \in
K[u] \cap \mSnu$ is of the form
$h=\frac{u^d}{\pi^{\nu\cdot d}}+ \sum_{i=0}^{d-1} b_i u^i$ with $v_K(b_i)+\nu
i > 0$.
\end{corollary}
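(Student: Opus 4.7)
The plan is to reduce Weierstrass preparation to a single application of the Euclidean division of Proposition \ref{sec2:prop1}, with dividend $y := u^d/\pi^{\nu d}$ and divisor $x$. Before that can be done, I need to check that $y$ belongs to $\mSnu$: since $x$ is distinguished with $\deg_W(x) = d$, the minimum defining $v_\nu(x) = 0$ is attained at the coefficient $a_d$ of $u^d$, hence $v_K(a_d) = -\nu d$. Because $v_K$ takes values in $\Z$, this forces $\nu d \in \Z$, so $\pi^{\nu d}$ makes sense in $K$ and a direct computation gives $v_\nu(y) = 0 = v_\nu(x)$. The hypothesis of Proposition \ref{sec2:prop1} is therefore met, and I obtain a unique decomposition $y = q' \cdot x + r$ with $q' \in \mSnu$ and $r \in K[u] \cap \mSnu$ of degree strictly less than $d$.

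I would then set $h := y - r = q' \cdot x$. By construction $h \in K[u] \cap \mSnu$ and $h = \frac{u^d}{\pi^{\nu d}} + \sum_{i=0}^{d-1} b_i u^i$ with $-b_i$ the coefficients of $r$; in particular $v_K(b_i) + \nu i \geq 0$ since $r \in \mSnu$. Once I know that $q'$ is invertible in $\mSnu$, setting $q := (q')^{-1}$ yields the desired factorization $x = q \cdot h$.

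The heart of the proof is thus to establish that $q'$ is invertible, which by Corollary \ref{sec2:cor1} amounts to showing $v_\nu(q') = 0$ and $\deg_W(q') = 0$. I would argue as follows using Lemma \ref{sec2:lemma0}. The coefficient $1/\pi^{\nu d}$ of $u^d$ in $h$ realizes $v_K + \nu d = 0$, so $v_\nu(h) = 0$ and $\deg_W(h) \leq d$. From $h = q' \cdot x$ and $v_\nu(x) = 0$, $\deg_W(x) = d$, this gives $v_\nu(q') = 0$ and $\deg_W(q') = \deg_W(h) - d \leq 0$. On the other hand $q' \neq 0$ (since $h \neq 0$), so $\deg_W(q') \geq 0$, forcing $\deg_W(q') = 0$ and $\deg_W(h) = d$.

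Finally, the strict inequalities $v_K(b_i) + \nu i > 0$ for $i < d$ are a direct consequence of $\deg_W(h) = d$: by definition of the Weierstrass degree, the minimum $v_\nu(h) = 0$ is not attained at any index smaller than $d$, so strict inequality holds for each $b_i$. The main technical point in this plan is the interplay between the Weierstrass degree of $h$ coming from its explicit leading term and the factorization $h = q' \cdot x$, which together pin down $\deg_W(q') = 0$; everything else is bookkeeping with valuations.
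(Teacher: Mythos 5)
Your proposal is correct and follows essentially the same route as the paper: divide $u^d/\pi^{\nu d}$ by $x$ using Proposition \ref{sec2:prop1}, set $h = u^d/\pi^{\nu d} - r = q'x$, and use the multiplicativity of $v_\nu$ and $\deg_W$ (Lemma \ref{sec2:lemma0}) together with Corollary \ref{sec2:cor1} to see that $q'$ is a unit. You even make explicit two points the paper passes over quickly, namely the verification that $v_\nu(u^d/\pi^{\nu d}) \geq v_\nu(x)$ and the deduction of the strict inequalities $v_K(b_i)+\nu i>0$ from $\deg_W(h)=d$.
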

\begin{proof}
We first notice that $d \nu$ is a nonnegative integer. Indeed, it is 
clearly nonnegative, and writing $x = \sum a_d u^d$, we have $v_\Ri(a_d) 
+ d \nu = 0$ (since $x$ is assumed to be distinguished) and,
consequently, $d \nu = - v_\Ri(a_d) \in \Z$.

By proposition \ref{sec2:prop1}, there exist $q \in \mSnu$ and $r
\in K[u] \cap \mSnu$ such that $\deg r < d$ and
$$\frac{u^d}{\pi^{d\cdot \nu}} = q\cdot x+r.$$
Using Lemma \ref{sec2:lemma0}, we obtain $v_\nu(q)=0$ and
$\deg_W(q)=0$. Then, Corollary \ref{sec2:cor1} implies that $q$
is invertible. To finish the proof it suffices to remark that
$\deg_W(\frac{u^d}{\pi^{d\cdot \nu}}-r)=d$ and the result follows from the
definition of $\deg_W$.
\end{proof}

\begin{remark}
The previous proposition is closely related to the Proposition
\ref{prop:analytic} since it says that an element of $\mO_\nu$ is in
$\mSpnu$ if and only if it can be written as product of a polynomial
times a function which does not have any zero in $D_\nu$.
\end{remark}

The following proposition states that the rings $\mSpnu$ and $\mSunu$ 
are Euclidean rings and provides algorithms with oracles to compute the 
division.

\begin{proposition}
The ring $\mSpnu$ is Euclidean, the ring $\mSunu$ is a discrete
valuation ring for the valuation $v_\nu$ (and as a consequence  is also
Euclidean). 
Moreover, $\mE$ is a field.
\end{proposition}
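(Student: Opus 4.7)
The plan is to establish the three assertions in sequence, all based on the Weierstrass framework developed earlier in this subsection.

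For $\mSpnu$ Euclidean, the natural norm is $\deg_W$. On any nonzero $x \in \mSpnu$, the values $v_K(a_i)+\nu i$ are bounded below and lie in the discrete set $\frac{1}{\alpha}\Z$, so the minimum defining $\deg_W(x)$ is attained and $\deg_W$ takes values in $\N$ on $\mSpnu\setminus\{0\}$. Given $x,y \in \mSpnu$ with $x \neq 0$, I would first clear denominators in $\pi$ to reduce to $x,y \in \mSnu$, then choose $M \in \N$ large enough that $v_\nu(\pi^M y) \geq v_\nu(x)$, and apply Proposition~\ref{sec2:prop1} to obtain $\pi^M y = Q x + R$ with $Q \in \mSnu$ and $R \in K[u]\cap\mSnu$ of ordinary degree strictly less than $\deg_W(x)$. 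Dividing through by $\pi^M$ gives the Euclidean division $y = (\pi^{-M}Q)\, x + \pi^{-M}R$ in $\mSpnu$; since scaling by a power of $\pi$ shifts $v_\nu$ uniformly without affecting $\deg_W$, the remainder satisfies $\deg_W(\pi^{-M}R) \leq \deg R < \deg_W(x)$.

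For $\mSunu$, I would exhibit a uniformizer using Bezout: picking $j_0,k_0\in\Z$ with $j_0\alpha + k_0\beta = 1$, set $t = \pi^{j_0} u^{k_0}$. Writing $t = \pi^{j_0-m\beta} u^{k_0+m\alpha} \cdot (\pi^\beta/u^\alpha)^m$ for $m$ large enough to make the first factor lie in $\mSnu$ shows $t \in \mSnu[\pi^\beta/u^\alpha] \subset \mSunu$; moreover $v_\nu(t) = 1/\alpha$ is the smallest positive value in the value group $\frac{1}{\alpha}\Z$ of $v_\nu$. The crux is to show that any $x = \sum_i a_i u^i \in \mSunu$ with $v_\nu(x) \geq 1/\alpha$ is divisible by $t$ in $\mSunu$: the coefficients of the Laurent series $x/t$ are $b_j = a_{j+k_0}\pi^{-j_0}$, which satisfy $v_K(b_j) + \nu j = (v_K(a_{j+k_0}) + \nu(j+k_0)) - 1/\alpha \geq 0$ by hypothesis and tend to $+\infty$ as $j \to -\infty$ by the analogous property of $x$. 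Hence the maximal ideal of $\mSunu$ is $(t)$, and since $\mSunu$ is complete for $v_\nu$ by construction, it is a complete DVR, in particular Euclidean.

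Finally, the same coefficient computation shows that any $x \in \mE$ with $v_\nu(x) = -n/\alpha$ ($n \geq 0$) satisfies $t^n x \in \mSunu$, so $\mE = \mSunu[1/t] = \Frac(\mSunu)$ and is therefore a field. The main obstacle is the coefficient-wise divisibility argument in the $\mSunu$ step, which is the essential content of the DVR property; the other two statements then reduce to bookkeeping with $v_\nu$ and $\deg_W$.
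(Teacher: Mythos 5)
Your treatment of $\mSpnu$ is correct and is essentially the paper's own reduction (clear denominators in $\pi$, invoke Proposition \ref{sec2:prop1}, divide back). For $\mSunu$, however, there is a genuine gap at exactly the point that constitutes the content of the claim. You construct $t=\pi^{j_0}u^{k_0}$ with $v_\nu(t)=1/\alpha$ and show that every $x\in\mSunu$ with $v_\nu(x)\geq 1/\alpha$ is divisible by $t$; this gives $\{v_\nu>0\}=(t)$. But to conclude that $\mSunu$ is a discrete valuation ring \emph{for $v_\nu$} you must also show that every element of valuation $0$ is a unit, equivalently that $\mSunu$ is local with $(t)$ as its unique maximal ideal. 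This does not follow from the divisibility statement alone: a domain can have a principal maximal ideal $I$ with $\bigcap_n I^n=0$ without being local (take $\Z$ and $I=(p)$), and the phenomenon is real in this setting --- in $\mSnu$ itself the element $u^\alpha/\pi^\beta$ has $v_\nu=0$ yet is not invertible, so ``valuation zero implies unit'' is special to the completion $\mSunu$ and requires an argument. The paper supplies precisely this: the quotient $\mSunu/\mathfrak{m}$ by $\mathfrak{m}=\{v_\nu>0\}$ is the field $k((u^\alpha))$, and completeness allows one to Hensel-lift the inverse of any element of valuation zero.

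The gap propagates to your final step: $\mE=\mSunu[1/t]$ is a field only if every nonzero element of $\mSunu$ is a unit times a power of $t$, which is again the missing units statement. The repair is short --- given $y$ with $v_\nu(y)=0$, lift an inverse of its image in $k((u^\alpha))$ to get $y z_0=1-w$ with $v_\nu(w)>0$, and invert $1-w$ by the series $\sum_k w^k$, which converges by completeness --- but as written your proof omits it, and once it is included your route and the paper's essentially coincide.
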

\begin{proof}

Let $x,y \in \mSpnu$. There exist $s,t \in \N$ such that $\pi^s x, \pi^t 
y \in \mSnu$ and $v_\nu(\pi^t\cdot y) \geq v_\nu(\pi^s\cdot x)$. 
Applying Proposition \ref{sec2:prop1}, yields $q \in \mSnu$ and $r \in 
K[[u]] \cap \mSnu$ such that $\deg(r) < \deg_W(x)$ and $y=\pi^{s-t}\cdot 
q\cdot x+\pi^{-t}\cdot r$ and we are done.

In order to prove that $\mSunu$ is a discrete valuation ring, we have
to show that the set of invertible elements of $\mSunu$ is the set of
elements $x \in \mSunu$ such that $v_\nu(x)=0$. Write $\nu
=\beta/\alpha$, with $\alpha, \beta$ relatively prime numbers.  Let
$\mathfrak{m}$ be the ideal defined by $\{ x \in \mSunu, v_\nu(x) > 0
\}$, it is clear that $\mSunu / \mathfrak{m}$ is isomorphic to the
field $k((u^\alpha))$. As $\mSunu$ is complete for the $v_\nu$
valuation, the Hensel lift algorithm gives an algorithm with oracles to compute the
inverse of an element whose valuation is zero. The Algorithm
\ref{algo2} uses a fast Newton iteration to perform this computation
modulo $\mathfrak{m}^n$ at the expense of $O(\log(n))$ multiplications
in $\mSunu$. 

Let $x$ be a non zero element of $\mE$, by dividing it by a power of
$\pi$ we can suppose that $v_\nu(x)=0$ and by using the algorithm with
oracle Algorithm \ref{sec2:algo1}, we can invert it. 
\end{proof}

\begin{algorithm}\label{algo2}
\SetKwInOut{Input}{input}\SetKwInOut{Output}{output}

\Input{$x \in \mSunu$ such that $v_\nu(x)=0$, $n \in \N$}
\Output{$y \in \mSunu$ such that $x\cdot y=1 \mod \mathfrak{m}^n$}
\BlankLine
\eIf{$n=1$}{
$y \leftarrow 1/\overline{x} \mod \mathfrak{m}$\;
}{
$y \leftarrow \text{Inverse}(x,\lceil n/2 \rceil)$\;
$y \leftarrow y+y(1-ay) \mod \mathfrak{m}^n$\;
}
\caption{Inverse}\label{sec2:algo1}
\end{algorithm}

\begin{remark}

  One can use the usual Euclidean algorithm to compute the B\'ezout
  coefficients of $x,y \in \mSpnu$. This algorithm outputs $g,k,l,m,n
  \in \mSpnu$ such that $g$ is the greatest common divisor of $x$ and
  $y$, $k\cdot x+l\cdot y=g$, $m\cdot x+n\cdot y=0$ and $k\cdot n-l\cdot m=1$. It proceeds by
  using the fact that $gcd(x,y)=gcd(y,r)$ where $r$ is the rest of the
  division of $x$ by $y$ and uses $O(\deg_W(y))$ calls to the
  Euclidean division Algorithm \ref{algo1}. We remark, as the rest of
  the division of two elements of $\mSnu$ is an element of $K[u]$,
  that starting from the second iteration of this algorithm all the
  divisions to be computed are the usual division between elements of
  $K[u]$. Unfortunately, we will see that in \S
  \ref{sec:precision}, that the
  Euclidean algorithm in general is not stable, so that we might need
  extra informations, about $x$ and $y$ in order to compute an
  approximation of their gcd from the knowledge of an approximation of
  $x$ and an approximation of $y$.
\end{remark}

\section{Modules over $\mSnu$}\label{sec:modules}

Let $d$ be a positive integer and fix $\nu \in \Q$. We want to compute
with finitely generated torsion free $\mSnu$-modules. Any such module
$\M$ can be embedded in $\mSnu^d$ for $d \in \N$ and can be
represented by a matrix with coefficients in $\mSnu$ whose column
vectors are the coordinates of generators of $\M$ in the canonical
basis of $\mSnu^d$.  Indeed, we can always embed $\M$ is $\M
\otimes_{\mSnu} \Frac(\mSnu)$ and select a basis $(e_1, \ldots, e_d)$ 
of $\M \otimes_{\mSnu} \Frac(\mSnu)$
together with an element $D \in \mSnu$ such that the image of $\M$ in
$\M \otimes_{\mSnu} \Frac(\mSnu)$ is contained in the free $\mSnu$-module
generated by the $\frac 1 D . e_i$'s.

A first problem arises here: it is not possible to bound the number of
generators of the submodules of $\mSnu^d$ that we have to compute
with.  For instance, for $d=1$ and $\nu = 0$, choose a positive integer $k$ and
consider the sub-$\mS_0$-module $\M_k$ of $\mS_0$ generated by the
family $(\pi^{k-j} u^{j})_{j=0, \ldots, k}$.  Then $\M_k$ can not be
generated by less than $k+1$ elements. Indeed, let $(e_0, \ldots,
e_n)\in \mS_0^n$ be a family of generators of $\M_k$, and for $j \geq 0$
and define a filtration on $\M_k$ by letting $F^j \M_k = \M_k \cap u^j 
\mS_0$. We are going to 
prove by induction on $t \in \{0, \ldots, k\}$ that there exists a
matrix $M_t \in M_{n \times n}(\mS_0)$ such that, if we set $(e'_0,
\ldots, e'_n)=(e_0, \ldots, e_n)\cdot M_t$ then $(e'_0, \ldots, e'_n)$
is a family of generators of $\M_k$, for $j < t$, $e'_j=u^j
\pi^{k-j} \mod F^{j+1}\M_k$ and $(e'_j)_{j \geq t}$ is a family of generators of $F^t
\M_k$. This is obviously true for $t=0$. Suppose that it is true
for $t_0 \in \{0, \ldots, k\}$.  Let $(e'_0, \ldots, e'_n)= (e_0,
\ldots, e_n)\cdot M_{t_0}$.  As the morphism $(\sum_{j=t_0}^k \mS_0
e'_j) / F^{t_0+1}\M_k \rightarrow \pi^{k-t}\Ri$, defined by $u^{t_0}
\sum a_i u^i \mapsto a_0$ is an isomorphism, we can suppose if
necessary by renumbering the family $(e'_i)$ that
$e'_{t_0}=u^{t_0} \pi^{k-t_0} \mod F^{t_0+1}\M_k$.  Then, by
considering linear combinations of the form $e'_j - \lambda
e'_{t_0+1}$ for $\lambda \in \mS_0$ for $j > t_0$, one can obtain a matrix
$M_{t_0+1}$ satisfying the induction hypothesis for $t_0+1$. Finally, we
get $n \geq k$.

A second problem comes from the fact that there is no unique way to 
represent a module by a set of generators.  For computational purpose, 
in order to check equality between modules for instance, it is important 
to have a \emph{canonical representation}, that is a bijective 
correspondence between mathematical objects and data structures. An 
example of such a canonical representation exists for finitely generated 
modules with coefficients in an Euclidean ring (\cite{MR1228206}): it is 
the so-called Hermite Normal Form (HNF). It is given by a triangular 
matrix (with some extra conditions) that can be computed from an initial 
matrix $M$ by doing operations on column vectors of $M$.  Even if 
$\mSnu$ is not Euclidean, we could have hoped that such representations 
still exist for free modules.  Unfortunately, it turns out that it is 
not the case. Indeed, in general, there does not even exist a triangular 
matrix form for matrices over $\mSnu$. For instance, for $\nu=0$, take:
$$M = \Big( \begin{matrix}
  u & \pi \\ \pi & u \end{matrix} \Big) \in M_{2 \times 2}(\mS_0)$$
  and assume that $M$ can be written as a product $LP$ where $L$ is
  lower-triangular and $P$ is invertible. Let $\alpha$ and $\beta$ be
  the diagonal entries of $L$. Then, $\alpha$ and $\beta$ belong to
  the maximal ideal of $\mS_0$ (since the coefficients of $M$ all
  belong to this ideal) and the product $\alpha \beta$ is equal to a
  unit times $u^2 - \pi^2$.  Hence, by multiplying $\beta$ by an
  invertible element in $S_0$ if necessary, we can assume that
  $\beta = u \pm \pi$ since $S_0$ is a unique factorisation domain.
  On the other hand, by hypothesis, there exist $a, b \in \mS_0$ such
  that $u a + \pi b = 0$ and $\pi a + ub = \beta$. This equality
  implies that $\pi$ divides $a$ and therefore that $\beta = \pi a + u b
  \in u \mS_0 + \pi^2 \mS_0$.  This is a contradiction.

In this section, we explain how to get around these problems.  First,
we recall the notion of quasi-isomorphism and study the localisation
of the modules with respect to $\pi$ or $u^\alpha/\pi^\beta$ in order to obtain
canonical representations well suited for the computation in the
category of modules up to quasi-isomorphism. Then, we
describe a generalisation of an algorithm of Cohen to compute the
maximal module associated to a given torsion-free $\mSnu$-module and obtain a
bound on the number of generators of a maximal $\mSnu$-module.
We explain how to combine the different approaches in order to obtain a
comprehensive algorithmic toolbox for modules over $\mSnu$.

\subsection{Quasi-isomorphism and maximal modules}
In order to be able
to control the number of generators of a $\mSnu$-module,
we are going to compute up to finite modules which will be considered
as "negligible". 

\begin{definition}
A finitely generated $\mSnu$-module is said to be \emph{finite} if it has
finite length.
Let $\M$ and $\M'$ be two finitely generated $\mSnu$-modules, let $f :
\M \to 
\M'$ be a $\mSnu$-linear morphism. We say that $f$ is a 
\emph{quasi-isomorphism} if its kernel and its co-kernel are finite
modules.
\end{definition}

\begin{remark}
Since $\ker f$ and $\coker f$ are finitely generated (because $\mSnu$ is a 
noetherian ring), it is easy to check that they have finite length if 
and only if they are canceled, at the same time, by a distinguished
element of $\mSnu$ and a power of $\pi$. A quasi-isomorphism between torsion-free modules is
always injective. Indeed, its kernel, being a submodule annihilated by a
power of $u^\alpha/\pi^\beta$ and $\pi$ of a torsion free module, is zero.
\end{remark}

\begin{example}\label{mysimpleexample}
  Let $\M$ be the submodule of $\mS_0$ generated by $(\pi^2,\pi  u^3)$.
  The inclusion $\M \subset \pi \mS_0$ yields an injective morphism whose
  is annihilated by $\pi$ and $u^3$. As a consequence $\M$
  is quasi-isomorphic to the free module $\pi.\mS_0$ (see figure
  \ref{figquasi}).
\end{example}

\begin{figure}[h]
\begin{center}
\begin{tikzpicture}
\draw[step=0.5cm,lightgray, very thin] (0,0) grid (3,2);
\draw[->, thick] (0,0) -- (0,2);
\draw[->, thick] (0,0) -- (3,0);
\node[right] (p) at (0,2) {$\pi$};
\node[above] (u) at (3,0) {$u$};
\node[circle,color=red,fill=red,inner sep=1.5pt] (X) at (1.5,0.5) {};
\node[circle,color=red,fill=red,inner sep=1.5pt] (Y) at (0,1) {};
\node[below] (u3) at (X) {$u^3$};
\node[left] (p2) at (Y) {$\pi^2$};
\draw[color=red] (0,2) -- (0,1) -- (1.5,1) -- (1.5,0.5) -- (3,0.5);
\draw[pattern=north east lines, thin] (0,1) rectangle (1.5,0.5);
\end{tikzpicture}
\end{center}
\caption{The module $\M$ is quasi-isomorphic to $\pi\cdot \mS_0$.}
\label{figquasi}
\end{figure}
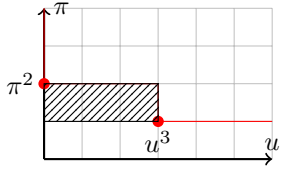

We have a canonical representative in a class of quasi-isomorphism 
which is given by the following definition.
\begin{definition}\label{def:max}
Let $\M$ be a torsion-free finitely generated
$\mSnu$-module. We say that $\M'$ together with a
quasi-isomorphism $f : \M \rightarrow \M'$ is \emph{maximal} for $\M$ if for
every $\Nc$, torsion-free $\mSnu$-module, and quasi-isomorphism
$f' : \M \rightarrow \Nc$, there exists a morphism $g : \Nc
\rightarrow \M'$ which makes the following diagram commutative:

\begin{equation}\label{sec3.2:diag1}
\raisebox{-1cm}{
\begin{tikzpicture}[normal line/.style={->}, font=\scriptsize]
\matrix(m) [ampersand replacement=\&, matrix of math nodes, row sep = 3em, column sep =
2.5em]
{ \M \& \& \M' \\
     \& \Nc \& \\};
\path[normal line]
(m-1-1) edge node[auto] {$f$} (m-1-3)
(m-1-1) edge node[auto] {$f'$} (m-2-2)
(m-2-2) edge node[auto] {$g$} (m-1-3);
\end{tikzpicture}
}
\end{equation}
\end{definition}

The morphism $g$ in the definition is unique and is in fact a
quasi-isomorphism. Indeed, by the commutativity of the diagram, the
image of $g$ contains the image of $f$. Thus, the cokernel of $g$ is
finite. Moreover, since $f$ is injective, 
$g$ is injective on $\im f'$, which is cofinite in $\Nc$. It follows
that $\ker g$ is finite and $g$ is a
quasi-isomorphism.  Moreover, for every $x \in \Nc$, there exists a
positive integer $n$ such that $\pi^n x$ is in the image of $f'$. The
image of $\pi^n x$ by $g$ is then uniquely defined by the commutativity of
the diagram (\ref{sec3.2:diag1}). The uniqueness of $g$ follows.

A maximal module for $\M$, if it
exists, is unique up to isomorphism. Indeed, if $\M'$ and $\M''$ are
two maximal modules for $\M$ then there exist two quasi-isomorphisms
$g_1 : \M' \rightarrow \M''$ and $g_2 : \M'' \rightarrow \M'$ and the
uniqueness of $g$ in the diagram (\ref{sec3.2:diag1}) implies that
$g_1 \circ g_2 = \mathrm{Id}_{\M''}$ and $g_2 \circ g_1 =
\mathrm{Id}_{\M'}$.  If it exists, we denote the maximal module of $\M$ by
$\Max(\M)$. We can rephrase the above by saying that if $\M'$ is the
maximal module for $\M$ then there is a quasi-isomorphism from
$\M$ into $\M'$ and any quasi-isomorphism $\M' \rightarrow
\M''$ is an isomorphism. In fact, this condition characterises maximal
modules:

\begin{lemma}\label{sec3.1:lemmax1}
  Let $\M$ be a finitely generated  torsion free $\mSnu$-module. Let
  $\M'$ be a $\mSnu$-module such that there is a quasi-isomorphism $f:\M
  \rightarrow \M'$. The following assertions are equivalent:
  \begin{enumerate}
    \item $\M'$ is maximal;
    \item any quasi-isomorphism $\M' \rightarrow \M''$ is an
      isomorphism.
  \end{enumerate}
\end{lemma}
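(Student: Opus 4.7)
The plan is to prove the two implications separately. $(1) \Rightarrow (2)$ will be a formal consequence of the universal property of Definition \ref{def:max} together with the uniqueness of the lift $g$ recorded just below it; $(2) \Rightarrow (1)$ is the substantive direction and will rely on constructing $g$ explicitly by embedding $\M'$ and $\Nc$ into a common ambient module.

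For $(1) \Rightarrow (2)$, I would take any quasi-isomorphism $h : \M' \to \M''$ with $\M''$ torsion-free. First, I would verify that $h \circ f$ is itself a quasi-isomorphism: its kernel is sandwiched between $\ker f$ and $\ker h$, while its cokernel carries a two-step filtration with subquotients from $\coker h$ and a quotient of $\coker f$, all finite. Maximality of $\M'$ applied to the pair $(h \circ f, \M'')$ then produces $g : \M'' \to \M'$ with $g \circ h \circ f = f$. Both $g \circ h$ and $\mathrm{Id}_{\M'}$ are self-maps of $\M'$ that, post-composed with $f$, return $f$; the uniqueness argument stated after Definition \ref{def:max} (which uses only that $\M'$ has no $\pi$-torsion) therefore forces $g \circ h = \mathrm{Id}_{\M'}$. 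For the reverse composition, I would observe that $h \circ g$ and $\mathrm{Id}_{\M''}$ agree on the cofinite submodule $h(\M') \subseteq \M''$; then, given $y \in \M''$, I would pick $N$ with $\pi^N y \in h(\M')$ (possible because $\coker h$ is killed by a power of $\pi$) to obtain $\pi^N\bigl((h \circ g)(y) - y\bigr) = 0$, and torsion-freeness of $\M''$ closes the argument.

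For $(2) \Rightarrow (1)$, I fix a torsion-free $\Nc$ and a quasi-isomorphism $f' : \M \to \Nc$, and I must construct $g : \Nc \to \M'$ with $g \circ f' = f$. The idea is to place both $\M'$ and $\Nc$ inside the common ambient space $V := \M \otimes_{\mSnu} \Frac(\mSnu)$. Since $f$ and $f'$ are quasi-isomorphisms between torsion-free modules, their base changes $f \otimes 1$ and $f' \otimes 1$ are isomorphisms; inverting these identifies $\M'$ and $\Nc$ with submodules of $V$, both containing the canonical image of $\M$. The sum $\M' + \Nc \subseteq V$ is torsion-free (a submodule of a $\Frac(\mSnu)$-vector space), and the inclusion $\M' \hookrightarrow \M' + \Nc$ has cokernel isomorphic to $\Nc/(\M' \cap \Nc)$, a quotient of the finite module $\coker f'$. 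Hence $\M' \to \M' + \Nc$ is a quasi-isomorphism into a torsion-free module, and hypothesis (2) forces it to be an isomorphism. This means $\Nc \subseteq \M'$ inside $V$, and the resulting inclusion $g : \Nc \hookrightarrow \M'$ satisfies $g \circ f' = f$ by construction.

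The main obstacle is the $(2) \Rightarrow (1)$ direction: one needs an ambient object into which $\M'$ and $\Nc$ embed compatibly with $f$ and $f'$. The crucial insight is that quasi-isomorphisms between torsion-free $\mSnu$-modules become honest isomorphisms after rationalisation, so $V = \M \otimes_{\mSnu} \Frac(\mSnu)$ simultaneously realises both $\M'_{\Frac}$ and $\Nc_{\Frac}$ and provides the common ambient space in which the comparison $\Nc \subseteq \M'$ can actually be taken.
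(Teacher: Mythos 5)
Your proof is correct, and the substantive direction $(2)\Rightarrow(1)$ is handled by a genuinely different construction than the paper's. The paper also dismisses $(1)\Rightarrow(2)$ as already established (it follows from the uniqueness discussion after Definition~\ref{def:max}, exactly as you spell out), and for $(2)\Rightarrow(1)$ it builds the common ambient object as the pushout $\M_0=(\M'\oplus\Nc)/\Delta(\M)$, where $\Delta=f\oplus f'$ is the diagonal embedding; the canonical injections $i_{\M'}$ and $i_\Nc$ are shown to be quasi-isomorphisms, hypothesis (2) forces $i_{\M'}$ to be an isomorphism, and $g=i_{\M'}^{-1}\circ i_\Nc$ is the desired lift. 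You instead rationalise: you identify $\M'$ and $\Nc$ with submodules of $V=\M\otimes_{\mSnu}\Frac(\mSnu)$ via the isomorphisms $f\otimes 1$ and $f'\otimes 1$, and apply (2) to the inclusion $\M'\hookrightarrow\M'+\Nc$. The two routes are closely related --- your $\M'+\Nc$ is precisely the image of the paper's pushout in $V$ --- but yours has a concrete advantage: a submodule of a $\Frac(\mSnu)$-vector space is automatically torsion-free, whereas the paper's claim that $\M_0$ is torsion-free is asserted as ``clear'' and is in fact false in general (take $\M=\pi\mS_0$ and $f=f'$ the inclusion into $\M'=\Nc=\mS_0$: the class of $(1,1)$ is $\pi$-torsion in $\M_0$). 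Since the universal property and criterion (2) are tailored to torsion-free targets, your version avoids having to patch this. The only point you leave implicit is that $\M'$ itself is torsion-free (needed so that $\M'\to\M'\otimes\Frac(\mSnu)$ is injective); this is the standing convention in this section, and under hypothesis (2) it can also be deduced by applying (2) to the quotient of $\M'$ by its (finite) torsion submodule.
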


\begin{proof}
We only have to prove that the second property implies that $\M'$ 
verifies the universal property of maximal modules.  For this let $\Nc$ 
be a finite type $\mSnu$-module such that there is a quasi-isomorphism 
$f': \M \rightarrow \Nc$. Let $\Delta=f \oplus f': \M \rightarrow \M' 
\oplus \Nc$ be the diagonal embedding and let $\M_0=\frac{\M' \oplus 
\Nc}{\Delta(\M)}$. It is clear that $\M_0$ is a finitely generated 
torsion free $\mSnu$-module.

There are canonical injections $i_{\M'}:\M' \rightarrow \M_0$ and
$i_{\Nc}:\Nc \rightarrow \M_0$. We claim that $i_{\M'}$ and $i_{\Nc}$
are quasi-isomorphisms. To see that, it suffices to show that the
induced injection $i_{\M}=(i_{\M'},i_{\Nc})\circ \Delta:\M \rightarrow \M_0$ has a finite cokernel.
But $$\coker i_{\M}=\frac{\coker f \oplus \coker f'}{\Delta(\M) \cap
(\coker f \oplus \coker f')}$$ 
which has finite length being a quotient of $\coker f \oplus \coker
f'$.

Next, by hypothesis $i_{\M'}$ is in fact an isomorphism so that we
have a quasi-isomorphism $g=i_{\M'}^{-1} \circ i_{\Nc}$ which sits in
the following diagram:
\begin{equation}\label{sec3.1:diagqua2}
\raisebox{-1cm}{
\begin{tikzpicture}[normal line/.style={->}, font=\scriptsize]
\matrix(m) [ampersand replacement=\&, matrix of math nodes, row sep =
  3em, column sep =
7em]
{  \M' \& \frac{\M' \oplus \Nc}{\Delta(\M)} \\
    \M \& \Nc \\};
\path[normal line]
(m-2-2) edge node[auto] {$i_{\Nc}$} (m-1-2)
(m-1-1) edge node[auto] {$i_{\M'}$} (m-1-2)
(m-2-1) edge node[auto] {$f$} (m-1-1)
(m-2-1) edge node[auto] {$f'$} (m-2-2)
(m-2-2) edge node[auto] {$g$} (m-1-1);
\end{tikzpicture}
}
\end{equation}
It is clear that the lower left triangle of the diagram is commutative
and we are done.
\end{proof}
%Rk : this is working in general for an local regular ring of dimension
%2

A theorem of Iwasawa \cite{MR0124316} asserts that if $\M$ is a finitely 
generated module over $\mS_0$, then $\Max(\M)$ exists and is free of 
finite rank over $\mS_0$. The main object of \S \ref{subsec:iwasawa} is 
to extend this result to modules over $\mSnu$: we shall provide a 
\emph{constructive} proof of the existence of $\Max(\M)$ for any 
finitely generated torsion-free module $\M$ over $\mSnu$. We will see 
however that this $\Max(\M)$ is not free in general; nevertheless we 
shall provide an upper bound on the number of generators of $\Max(\M)$.

\begin{lemma}\label{sec3.2:lemma1}
Let $f : \M \rightarrow \M'$ be a quasi-isomorphism between
torsion-free finitely generated $\mSnu$-modules. Suppose that
$\M'$ is free then $\M'$ is maximal.
\end{lemma}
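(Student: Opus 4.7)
The plan is to invoke Lemma \ref{sec3.1:lemmax1}: it suffices to show that any quasi-isomorphism $g : \M' \to \M''$, with $\M''$ torsion-free finitely generated, is in fact an isomorphism. Such a $g$ is automatically injective (by the remark after the definition of quasi-isomorphism, a quasi-isomorphism between torsion-free modules is injective), so only surjectivity needs attention.

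I would fix a basis $(e_1, \ldots, e_r)$ of $\M'$ and work inside the ambient $F$-vector space $V = F e_1 \oplus \cdots \oplus F e_r$, where $F = \Frac(\mSnu)$. Localizing at $F$ is exact; moreover $\coker g$ is annihilated (per the remark cited above) by some power $\pi^n$ and by a distinguished element $P \in \mSnu$, which are both units in $F$, so $\coker(g) \otimes F = 0$ and $g \otimes F$ is an isomorphism. This lets me identify $\M'$ and $\M''$ with submodules of $V$. For any $x = \sum_i x_i e_i \in \M''$ one then has $a_i := \pi^n x_i \in \mSnu$ and $b_i := P x_i \in \mSnu$, subject to the compatibility $P a_i = \pi^n b_i$. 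Surjectivity of $g$ boils down to proving $\pi^n \mid a_i$ in $\mSnu$ for every $i$, for then $x_i \in \mSnu$ and $x \in \M'$.

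To establish this divisibility I use Weierstrass preparation (Corollary \ref{cor:weierstrass}) to write $P = q h$ with $q \in \mSnu^\times$ and $h \in \mSnu$ of Gauss valuation $v_\nu(h) = 0$. Substituting gives $h a_i = \pi^n (q^{-1} b_i) \in \pi^n \mSnu$, hence $v_\nu(a_i) = v_\nu(h a_i) \geq n = v_\nu(\pi^n)$. Since $\deg_W(\pi^n) = 0$, Lemma \ref{sec1:lemma2} applies and provides $\pi^n \mid a_i$ in $\mSnu$, which finishes the argument.

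No step really stands out as an obstacle. The conceptual content is the reduction, via Lemma \ref{sec3.1:lemmax1}, to surjectivity; the only delicate point is that the cokernel is only killed by a distinguished $P$ rather than by a pure power of $u^\alpha/\pi^\beta$, which is precisely the issue that Weierstrass preparation is designed to handle before Lemma \ref{sec1:lemma2} can be invoked.
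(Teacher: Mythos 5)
Your proof is correct and follows essentially the same route as the paper: reduce via Lemma \ref{sec3.1:lemmax1} to surjectivity, use the two annihilators $\pi^n$ and a distinguished element to get the relation $P a_i = \pi^n b_i$ on coordinates in a basis of $\M'$, and conclude $\pi^n \mid a_i$ from Lemma \ref{sec1:lemma2}. The detour through Weierstrass preparation is superfluous, since a distinguished element already has $v_\nu = 0$ by definition, which is all you use.
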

\begin{proof}
We use the criterion of Lemma \ref{sec3.1:lemmax1}. Let $\Nc$ be a
finitely generated $\mSnu$-module such that there is a
quasi-isomorphism $f' : \M' \rightarrow \Nc$ and we want to show that
$f'$ is an isomorphism.
As $\M'$ is torsion-free, we know that $f'$ is injective. Now, suppose
that there exists a non zero element in the cokernel of $f'$. It means that 
there exists a non zero $x \in \Nc$ which is not in the image of
$f'$. As $f'$ is a quasi-isomorphism there exists $n
\in \N$ and $\lambda \in \mSnu$ a distinguished element with $\pi^n\cdot x
\in \im f'$ and $\lambda\cdot x \in \im f'$. If we set $z_1 =
{f'}^{-1}(\pi^n\cdot x)$
and $z_2 ={f'}^{-1}(\lambda\cdot x)$, we have the relation
\begin{equation}\label{eq:sec3.2:lemma1}
\lambda z_1 - \pi^n
z_2=0,
\end{equation}
in $\M'$. Let $(e_i)_{i \in I}$ be a basis of $\M'$ and write $z_i =
\sum \mu_i^j e_j$ for $i=1,2$.
Putting this in (\ref{eq:sec3.2:lemma1}), we obtain that $\lambda
\mu_1^j= \pi^n \mu_2^j$ and thus $\pi^n | \mu_1^j$ for $j \in I$ since $\lambda$ is a
distinguished element of $\mSnu$. But then $f'(\sum \mu_1^j/\pi^n
e_j)=1/\pi^n.
f(z_1)=x$ contradicting the fact that $x$ is not in the image of $f'$.
\end{proof}
\begin{remark}
One can rephrase Iwasawa's result in a more abstract way using the 
category language. Let $\uMod_{\mSnu}$ be the category of finitely generated 
$\mSnu$-modules, that are torsion-free and let $\uModtf_{\mSnu}$ (resp. 
$\uFree_{\mSnu}$) denote its full subcategory gathering all torsion-free 
modules (resp. all free modules). We also introduce the category 
$\uMod^\qis_{\mSnu}$, which is by definition the category of finitely 
generated $\mSnu$-modules up to quasi-isomorphism, \emph{i.e.} 
$\uMod^\qis_{\mSnu}$ is obtained from $\uMod_{\mSnu}$ by inverting formally 
quasi-isomorphisms. We have a natural functor $\uMod_{\mSnu} \to 
\uMod^\qis_{\mSnu}$, whose restriction to $\uModtf_{\mSnu}$ defines a pylonet in 
the sense of \cite{MR2951749}, \S 1. It follows from the results of 
\emph{loc. cit} (see Corollary 1.2.2) that the $\Max$ construction is a 
functor: to a morphism $f : \M \to \M'$ in $\uModtf_{\mSnu}$, one can attach a 
morphism $\Max(f) : \Max(\M) \to \Max(\M')$. We recall briefly the 
construction of $\Max(f)$. Let $\M''$ be the pushout $\M' \oplus_\M 
\Max(\M)$, that is the direct sum $\M' \oplus \Max(\M)$ divided by $\M$ 
(embedded diagonally). We have a natural morphism $\M' \to \M''$ which 
turns out to be a quasi-isomorphism. Hence, there exists a map $\M'' \to 
\Max(\M')$ and we finally define $\Max(\M)$ to be the compositum $\Max(\M) 
\to \M'' \to \Max(\M')$ where the first map comes from the natural 
embedding $\Max(\M) \to \M' \oplus \Max(\M)$.
\end{remark}

If $\M$ is a submodule of $\mSnu^d$ (for some positive integer $d$), the 
following proposition gives a very explicit description of $\Max(\M)$.

\begin{proposition}
\label{prop:qis}
Write $\nu=\beta/\alpha$, with $\alpha,\beta$ relatively prime
integers.
Let $d$ be a positive integer and $\M$ be a submodule of $\mSnu^d$. 
Then $\Max(\M)$ exists and
$$\Max(\M) = \big\{ \,x \in \mSnu^d \quad | \quad \exists n \in \N, \,\,
\pi^n x \in \M \text{ and } (u^\alpha/\pi^\beta)^n\cdot x \in \M \, \big\}.$$
Furthermore the morphism $i_\M : \M \to \Max(\M)$ is the natural
embedding.
\end{proposition}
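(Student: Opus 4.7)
Let $\M'$ denote the subset of $\mSnu^d$ on the right-hand side of the claimed formula. The plan is to verify that $\M'$ is a submodule of $\mSnu^d$ containing $\M$, that the natural embedding $\M \hookrightarrow \M'$ is a quasi-isomorphism, and that $\M'$ satisfies the universal property of Definition \ref{def:max}. Stability of $\M'$ under addition and scalar multiplication is immediate by taking the maximum of the two exponents. Since $\mSnu$ is noetherian, $\M'$ is finitely generated; picking a finite set of generators and a uniform exponent $n$ that works for all of them shows that $\M'/\M$ is annihilated simultaneously by $\pi^n$ and $(u^\alpha/\pi^\beta)^n$, hence has finite length by the remark following the definition of quasi-isomorphism. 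So the inclusion $\M \hookrightarrow \M'$ is a quasi-isomorphism.

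For the universal property, let $f' : \M \to \Nc$ be a quasi-isomorphism with $\Nc$ torsion-free. Since $f'$ is injective (quasi-isomorphisms between torsion-free modules are) and $f' \otimes_{\mSnu} \Frac(\mSnu)$ is an isomorphism (its cokernel is torsion), I identify $\Nc$ with a submodule of $\M \otimes_{\mSnu} \Frac(\mSnu) \subset \Frac(\mSnu)^d$ containing $\M$. Given $y \in \Nc$, the finiteness of $\Nc/\M$ supplies an integer $n$ with $\pi^n y, (u^\alpha/\pi^\beta)^n y \in \M \subset \mSnu^d$. The required map $g : \Nc \to \M'$ will be this very inclusion inside $\Frac(\mSnu)^d$, once it is established that $y \in \M'$; commutativity of the triangle in Definition \ref{def:max} is then automatic because both $g \circ f'$ and the embedding $\M \hookrightarrow \M'$ restrict to the identity on $\M$ inside $\Frac(\mSnu)^d$. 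This will also show that the morphism $i_\M$ coincides with the natural embedding, as claimed.

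The crux --- and the only step I expect to demand care --- is showing $y \in \mSnu^d$. Write $y = (y_1, \ldots, y_d)$ in $\Frac(\mSnu)^d$. The condition $\pi^n y_i \in \mSnu$ places $y_i$ in $\mSpnu$, so the values $v_K(a_j) + \nu j$ of the coefficients $a_j$ of $y_i = \sum a_j u^j$ are bounded below in the discrete group $\frac{1}{\alpha}\Z$, and $v_\nu(y_i)$ is a well-defined minimum. Since $v_\nu(u^\alpha/\pi^\beta) = 0$, the hypothesis $(u^\alpha/\pi^\beta)^n y_i \in \mSnu$ reads $v_\nu(y_i) \geq 0$, which is exactly the condition $y_i \in \mSnu$. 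Hence $y \in \mSnu^d$, and by the very definition of $\M'$, $y \in \M'$, which completes the argument.
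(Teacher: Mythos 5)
Your proof is correct. The first half --- that the displayed set $\M'$ is a submodule of $\mSnu^d$ containing $\M$ and that the inclusion $\M \hookrightarrow \M'$ is a quasi-isomorphism --- is the same as the paper's, and you are in fact a bit more careful, invoking noetherianity to get a uniform exponent $n$ before concluding finite length. The second half takes a genuinely different route. The paper does not check the universal property of Definition \ref{def:max} directly: it appeals to the criterion of Lemma \ref{sec3.1:lemmax1} and shows that every quasi-isomorphism $g : \M' \to \M_0$ is an isomorphism (injectivity being automatic, and surjectivity being asserted to follow from the fact that the cokernel of the composite $\M \to \M_0$ is killed by a power of $\pi$ and a power of $u^\alpha/\pi^\beta$). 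You instead embed an arbitrary torsion-free $\Nc$ receiving a quasi-isomorphism from $\M$ into $\Frac(\mSnu)^d$ and show that each of its elements already lies in $\M'$, which produces the required map $g$ concretely. The decisive computation --- that $\pi^n y \in \mSnu^d$ and $(u^\alpha/\pi^\beta)^n y \in \mSnu^d$ force $y \in \mSnu^d$, because $v_\nu(u^\alpha/\pi^\beta)=0$ and the Gauss valuation of an element of $\mSpnu$ is an attained minimum in the discrete group $\frac{1}{\alpha}\Z$ --- is precisely what is left implicit in the paper's surjectivity claim; making it explicit is the main virtue of your version, together with the fact that it bypasses Lemma \ref{sec3.1:lemmax1} (whose proof needs a pushout construction). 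One small step you should spell out: to obtain $(u^\alpha/\pi^\beta)^n y \in \M$ from the finiteness of $\Nc/\M$, note that a finite-length module over the local ring $\mSnu$ is annihilated by a power of the maximal ideal, which contains $u^\alpha/\pi^\beta$; the remark following the definition of quasi-isomorphism only guarantees annihilation by \emph{some} distinguished element, and that weaker statement suffices for $y \in \mSnu^d$ but not, on its own, for membership in $\M'$.
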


\begin{proof}
Let $\M_\free$ be the set of $x \in \mSnu^d$ such that there exists some 
$n$ such that $\pi^n x$ and $(u^\alpha/\pi^\beta)^n\cdot x$ belong to $\M$. 
We want to show that $\Max(\M)$ exists and is equal to $\M_\free$. It is 
clear that $\M \subset \M_\free$ and that the quotient $\M_\free/\M$ is 
canceled by a power of $\pi$ and a power of $u^\alpha/\pi^\beta$ which 
is a distinguished element. Hence it has finite length, and the 
inclusion $\M \to \M_\free$ is a quasi-isomorphism. Next, suppose that 
we are given a $\mSnu$-module $\M_0$ together with a quasi-isomorphism 
$g:\M_\free \rightarrow \M_0$. Then there is a quasi-isomorphism 
$i_\M:\M \rightarrow \M_0$ that sits in the following diagram:

\begin{equation}
\raisebox{-1cm}{
\begin{tikzpicture}[normal line/.style={->},
  font=\scriptsize]
  \matrix(m) [ampersand replacement=\&, matrix of math nodes, row sep = 3em, column sep =
2.5em]
{ \M \& \M_\free \& \mSnu^d \\
     \& \M_0 \& \\};
\path[normal line]
(m-1-1) edge node[auto] {} (m-1-2)
(m-1-1) edge node[auto] {$i_\M$} (m-2-2)
(m-1-2) edge node[auto] {$g$} (m-2-2)
(m-1-2) edge node[auto] {} (m-1-3);
\end{tikzpicture}
}
\end{equation}
Note that $g$ is injective as it is a quasi-isomorphism.
Moreover,
we know that the cokernel of $\iota_\M$ is annihilated by a power of 
$u^\alpha/\pi^\beta$ and a power of $\pi$, which implies that $g$ is
surjective. Thus, $g$ is an isomorphism and by Lemma
\ref{sec3.1:lemmax1},  $\Max(\M)$ exists and $\Max(\M) = \M_\free$ as claimed. The second part of the 
proposition is clear from the above diagram.
\end{proof}

It follows directly from Proposition \ref{prop:qis} that the 
intersection of two maximal modules is maximal. The same is however not 
true for the sum: in general the $\mSnu$-module $\M + \M'$ is not 
maximal even if $\M$ and $\M'$ are (take for example $\M = u \mS_0$ and 
$\M' = \pi \mS_0$). This leads us to define the new operation $+_\free$ 
(which is much more pleasant than the usual sum of modules) on the set 
of maximal submodules of $\mSnu^d$ as follows: $$\M +_\free \M' = 
\Max(\M + \M').$$

We also deduce from Proposition \ref{prop:qis} that a
$S_0$-module $\M$ is free if and only if $\M = \M_\free$. This gives a nice
criterion to check if a $\mS_0$-module is free. It is not true
in general for a sub-$\mSnu$-module $\M$ of $\mSnu^d$ that $\Max(\M)$
is free (this will become apparent when we give the general shape of a
maximal $\mSnu$-module in \S \ref{subsec:iwasawa}).
However, by Lemma \ref{sec1:lempiso}, 
every $\mSnu$ becomes isomorphic to $\mS_0$
over a finite extension $\Ri'=\Ri[\Rpi]$ (where $\Rpi$ depends on $\nu$). 
Set $\eSnu = \mSnu \otimes_{\Ri} \Ri'$. For all submodule $\M$ of
$\mSnu^d$, we obtain that $\Max(\M \otimes \eSnu)$ is a free submodule of
$({\eSnu})^d$. Denote by $\Max^d_{\mSnu}$ the set of maximal
sub-$\mSnu$-modules of $\mSnu^d$ and by $\Free^d_{\eSnu}$ the set of
free sub-$\eSnu$-module of $({\eSnu})^d$. 

\begin{proposition}\label{prop:tech}
The natural map
$$\begin{array}{rcl}
  \Phi \quad : \Max_{\mSnu}^d & \longrightarrow &
\Free_{\eSnu}^d \\
\M & \mapsto & \Max(\M \otimes_{\mSnu} \eSnu)
\end{array}$$
is injective. A left inverse of $\Phi$ is given by $\M' \mapsto \M'
\cap \mSnu^d$.  Moreover, the image of $\Phi$ contains the subset of
$\Free^d_{\eSnu}$ of free modules which admit a basis $(e_i)_{i \in
I}$ where $e_i \in (\eSnu)^d$ and $e_i =\Rpi^{\alpha_i} e'_i$ with
$e'_i \in  (\mSnu)^d$ and $\alpha_i \in \N$.
\end{proposition}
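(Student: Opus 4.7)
The plan is to prove, in order, well-definedness of $\Phi$, the left-inverse identity (from which injectivity follows), and the image containment. For well-definedness: since $\Ri'$ is free of rank $\alpha$ over $\Ri$, the ring $\eSnu = \mSnu \otimes_\Ri \Ri'$ is free (hence faithfully flat) over $\mSnu$, so $\M \otimes_{\mSnu} \eSnu \hookrightarrow \eSnu^d$. By Lemma~\ref{sec1:lempiso}, $\eSnu \cong \mS_0'$, and Iwasawa's theorem (recalled just before Lemma~\ref{sec3.2:lemma1}) yields that the maximal module of any finitely generated torsion-free $\eSnu$-module is free. Hence $\Phi(\M) \in \Free^d_{\eSnu}$.

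For the left-inverse identity, let $\M \in \Max^d_{\mSnu}$; the inclusion $\M \subset \Max(\M \otimes \eSnu) \cap \mSnu^d$ is clear. For the converse, take $x$ in the intersection: Proposition~\ref{prop:qis} applied to $\M \otimes \eSnu \subset \eSnu^d$ yields $n$ with $\Rpi^n x$ and $(u/\Rpi^\beta)^n x$ in $\M \otimes \eSnu$, and raising to the $\alpha$-th power, $\pi^n x$ and $(u^\alpha/\pi^\beta)^n x$ in $\M \otimes \eSnu$. Since $x \in \mSnu^d$ and $\pi, u^\alpha/\pi^\beta \in \mSnu$, both elements also lie in $\mSnu^d$. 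The splitting $\eSnu \simeq \mSnu^\alpha$ of $\mSnu$-modules gives $\mSnu^d \cap (\M \otimes \eSnu) = \M$, so $\pi^n x, (u^\alpha/\pi^\beta)^n x \in \M$. Proposition~\ref{prop:qis} applied to the maximal $\M$ then forces $x \in \M$. Injectivity of $\Phi$ follows.

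For the image containment, let $N \in \Free^d_{\eSnu}$ with a basis $(e_i)_{i \in I}$ of the prescribed form $e_i = \Rpi^{\alpha_i} e'_i$, and set $\M := N \cap \mSnu^d$. Maximality of $\M$ is obtained by the same type of argument: if $\pi^n x, (u^\alpha/\pi^\beta)^n x \in \M \subset N$ with $x \in \mSnu^d$, expanding in the basis $(e_i)$ and equating the two expressions for $\pi^n(u^\alpha/\pi^\beta)^n x$ yields, via $\rho^{-1}$, relations $v^{\alpha n}\mu_i = \Rpi^{\alpha n}\lambda_i$ in $\mS_0'$ (with $v$ the variable of $\mS_0'$); the coprimality of $\Rpi$ and $v$ in the UFD $\mS_0'$ forces $\Rpi^{\alpha n} \mid \mu_i$, and so $x \in N \cap \mSnu^d = \M$. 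To obtain $\Phi(\M) = N$, since $N$ is free and hence maximal by Lemma~\ref{sec3.2:lemma1}, it suffices to check that $\M \otimes \eSnu \hookrightarrow N$ is a quasi-isomorphism, i.e.\ that $N/(\M \otimes \eSnu)$ has finite length.

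This last step is the main obstacle. I would exhibit inside $\M$ two families: the \emph{vertical} generators $f_i := \pi^{\lceil \alpha_i/\alpha\rceil} e'_i$ and the \emph{diagonal} generators $h_i := (u^{m_i}/\pi^{n_i})\, e'_i$, where $(m_i, n_i) \in \N^2$ is a solution of the Bezout relation $\beta m_i - \alpha n_i = \alpha_i$ (which exists since $\gcd(\alpha,\beta)=1$ and $\alpha_i \geq 0$: take the smallest $m_i \geq \alpha_i/\beta$ with $m_i \equiv \beta^{-1}\alpha_i \pmod\alpha$). A direct computation gives $f_i = \Rpi^{\gamma_i} e_i$ with $\gamma_i = \alpha\lceil\alpha_i/\alpha\rceil - \alpha_i \in [0,\alpha)$, and $h_i = (u/\Rpi^\beta)^{m_i} e_i$ (using $\alpha n_i + \alpha_i = \beta m_i$), and since both $\Rpi^{\gamma_i}$ and $(u/\Rpi^\beta)^{m_i}$ lie in $\eSnu$, both $f_i, h_i$ belong to $N$; by construction they also belong to $\mSnu^d$, hence to $\M$. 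Under $\rho^{-1}$ they correspond to $\Rpi^{\gamma_i} e_i$ and $v^{m_i} e_i$ in $(\mS_0')^d$, so the $\eSnu$-submodule of $N$ they span is $\bigoplus_i (\Rpi^{\gamma_i}, v^{m_i}) \mS_0' \cdot e_i$, and the cokernel inside $N = \bigoplus_i \mS_0' e_i$ is $\bigoplus_i \mS_0'/(\Rpi^{\gamma_i}, v^{m_i})$, a module of finite length $\sum_i \gamma_i m_i$. Hence $N/(\M \otimes \eSnu)$ has finite length, completing the proof. Note that a purely horizontal choice such as $u^{\lceil\alpha_i/\beta\rceil} e'_i$ does not in general give a finite length cokernel; the mixed generator produced by Bezout is essential here, and this is where the coprimality of $\alpha$ and $\beta$ enters.
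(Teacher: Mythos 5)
Your proof is correct and follows essentially the same route as the paper: the left inverse is $\M' \mapsto \M' \cap \mSnu^d$ combined with the explicit description of $\Max$ from Proposition~\ref{prop:qis}, and the image statement rests on the same B\'ezout relation $\beta m_i - \alpha n_i = \alpha_i$ producing a distinguished ``diagonal'' generator alongside a power of $\pi$ (the paper's $\pi$ and $u^\lambda/\pi^\gamma$ are exactly your $f_i$ and $h_i$ in the rank-one case). Your write-up is somewhat more explicit than the paper's at two points the paper leaves terse --- the identity $\mSnu^d \cap (\M \otimes_{\mSnu} \eSnu) = \M$ via the decomposition $\eSnu = \bigoplus_{j} \Rpi^j \mSnu$, and the computation of the cokernel as $\bigoplus_i \mS_0'/(\Rpi^{\gamma_i}, v^{m_i})$ of finite length --- but the underlying argument is the same.
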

\begin{remark}
 Actually, we will prove later (see Lemma \ref{sec3.2:lemmamax}) that the image of
  $\Phi$ is exactly the subset of $\Free^d_{\eSnu}$ verifying the
  condition of Proposition \ref{prop:tech}. 
\end{remark}
\begin{proof}
  In order to prove that $\Phi$ is injective, it is enough to prove
  that $\Phi$ has a left inverse. For this, let $\M \in
  \Max^d_{\mSnu}$ and let $\M' = \Max(\M \otimes_{\mSnu} {\eSnu}) \in
  \Free_{{\eSnu}}^d$. Then it suffices to prove that $\M_2=\M' \cap
  \mSnu^d$ is a maximal sub-$\mSnu$-module of $\mSnu^d$. Indeed, 
  as it is clear that $\M_2$ contains $\M$ and that the injection $\M
  \rightarrow \M_2$ is a quasi-isomorphism since the injection $\M
  \rightarrow \M'$ is a quasi-isomorphism,
  we remark that by the maximality of $\M$ it would
  imply that
  $\M = \M_2$.

  For this let $x \in \mSnu^d$ and suppose that there exists $n \in
  \N$ such that $\pi^n\cdot x \in \M_2$ and
  $(u^\alpha/\pi^\beta)^n\cdot x \in
  \M_2$. As $\M'$ is maximal and $\M_2 \subset \M'$, by Proposition
  \ref{prop:qis}, it means that $x \in \M'$. hence $x
  \in \M_2$. Using again Proposition \ref{prop:qis}, we deduce that
  $\M_2$ is maximal.

Let us now prove the last claim of the proposition. Let $\M' \in
\Free^d_{\eSnu}$ which admits a basis $(e_i)_{i \in I}$ where $e_i \in
(\eSnu)^d$ and $e_i =\Rpi^{\alpha_i} e'_i$ with $e'_i \in (\mSnu)^d$
and $\alpha_i \in \N$. We have to find a sub-$\mSnu$-module $\M$ of
$\mSnu^d$ such that $\M \otimes_{\mSnu} \eSnu$ is quasi-isomorphic to
$\M'$.  As $\M'=\bigoplus e_i \eSnu$, it is enough to treat the case
$d=1$. Let $0 \leq \alpha_1$ be an integer and let $\M'$ be the
sub-$\eSnu$-module of $\eSnu$ generated by $\Rpi^{\alpha_1}$. Let
$\lambda$ be a positive integer such that
$\frac{\alpha_1}{\alpha}+\lambda \frac{\beta}{\alpha}=\gamma \in \Z$.
Such a $\lambda$ exists because $\alpha$ and $\beta$ are relatively
prime. Let $\M$ be the sub-$\mSnu$-module of $\mSnu$ generated by
$\pi$ and $\frac{u^\lambda}{\pi^\gamma}$.  Let $\mu =
\Rpi^{-\alpha_1}\frac{u^\lambda}{\pi^\gamma}$, it is clear that
$v_\nu(\mu)=0$ so that $\mu$ is a distinguished element of $\eSnu$.
Thus, we have $\Rpi^{\alpha_1}.\mu \in \M  \otimes_{\mSnu} \eSnu$ and
$\Rpi^{\alpha_1}\cdot \Rpi^{\alpha-\alpha_1} \in \M  \otimes_{\mSnu}
\eSnu$ therefore $\M \otimes_{\mSnu} \eSnu$ is quasi-isomorphic to $\M'$.
\end{proof}

\subsection{An approach based on localisation}
We have seen that in a class of quasi-isomorphism of a finite type
torsion-free $\mSnu$-module $\M$ there exists a distinguished element $\Max(\M)$.
In  this section, we use this fact in order to represent the
quasi-isomorphism class of $\M$ by localizing with respect to
$u^\alpha/\pi^\beta$ and
$\pi$. We thus obtain a representation of finite type torsion-free
$\mSnu$-modules amenable to computations.

\subsubsection{A useful bijection}\label{sec:useful}

We keep our fixed positive integer $d$.  We recall that $$\mE = \big\{
\sum_{i \in \Z} a_i u^i, \,a_i \in K, \, v_K(a_i)+\nu i \,\, \text{bounded
below} \, \text{and} \lim_{i \rightarrow -\infty} v_K(a_i)+\nu i
=+\infty \big\}$$ is a field containing $\mSpnu$ and $\mSunu$.  If $\M$
is a sub-$\mSnu$-module of $\mE^d$, we shall denote by $\M_\pi$ (resp.
$\M_u$) the sub-$\mSpnu$-module (resp. the sub-$\mSunu$-module) of
$\mE^d$ generated by $\M$.  For example, if $\M$ is free over $\mSnu$
with basis $(e_1, \ldots, e_h)$, then $\M_\pi$ (resp. $\M_u$) is also
free over $\mSpnu$ (resp. $\mSunu$) with the same basis.  As $\M$ is
torsion free, and as $\mSunu$ and $\mSpnu$ are principal ideal
domains, $\M_\pi$ and $\M_u$ are free.  We denote by $\Max^d_{\mSnu}$
the set of maximal sub-$\mSnu$-modules of $\mSnu^d$ and for $A =
\mSnu,\mSpnu \text{ or } \mSunu$, let $\Free^d_A$ denote the set of
sub-$A$-modules of $A^d$, which are free over $A$. Recall that
$\Max^d_{\mS_0}=\Free^d_{\mS_0}$. Thus, the following lemma provides a
useful description of maximal $\mS_0$-modules.

\begin{lemma}
\label{lemma:bijection}
Let $\mS =\mS_0$.
The natural map
$$\begin{array}{rcl}
  \Psi' \quad : \Free_{\mS}^d & \longrightarrow &
\Free_{\mSp}^d \times \Free_{\mSu}^d \\
\M & \mapsto & (\M_\pi,\M_u).
\end{array}$$
is injective. If a pair $(A,B)$ is in the image of $\Psi'$, its unique preimage under $\Psi'$ is given by 
$A \cap B$.
\end{lemma}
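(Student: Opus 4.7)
The plan is to reduce the lemma to a single ring-theoretic identity, namely $\mSp \cap \mSu = \mS$ inside $\mE$, and then propagate it coordinate-by-coordinate with respect to a basis.

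First, I would fix $\M \in \Free^d_\mS$ with basis $(e_1, \ldots, e_h)$. Since $\mSp$ and $\mSu$ both embed into the field $\mE$, the family $(e_1, \ldots, e_h)$ remains linearly independent over each of $\mSp$, $\mSu$ and $\mE$. Consequently $\M_\pi = \bigoplus_{i=1}^h \mSp \cdot e_i$ is free of rank $h$ over $\mSp$, $\M_u = \bigoplus_{i=1}^h \mSu \cdot e_i$ is free of rank $h$ over $\mSu$, and both are contained in the common $\mE$-subspace $V = \bigoplus_{i=1}^h \mE \cdot e_i$ of $\mE^d$. In particular $\Psi'$ is well-defined.

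Second, I would establish the identity $\mSp \cap \mSu = \mS$ inside $\mE$. Using the explicit descriptions recalled in Section \ref{sec:deffirst} (specialised to $\nu = 0$): an element of $\mSp$ has the form $\sum_{i \geq 0} a_i u^i$ with $a_i \in K$ and $v_K(a_i)$ bounded below, whereas an element of $\mSu$ has the form $\sum_{i \in \Z} a_i u^i$ with $v_K(a_i) \geq 0$ and $\lim_{i \to -\infty} v_K(a_i) = +\infty$. Intersecting these two descriptions forces the support to satisfy $i \geq 0$ and $a_i \in \Ri$ for every $i$, which is exactly the condition to lie in $\mS = \Ri[[u]]$; the reverse inclusion $\mS \subset \mSp \cap \mSu$ is immediate. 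This bookkeeping between the two explicit descriptions is where the actual work lies, and is the main (though modest) obstacle.

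Finally, I would combine the two steps. Every $x \in \M_\pi \cap \M_u$ lies in $V$ and therefore admits a unique expansion $x = \sum_{i=1}^h \lambda_i e_i$ with $\lambda_i \in \mE$; the direct-sum decompositions of $\M_\pi$ and $\M_u$ force $\lambda_i \in \mSp$ and $\lambda_i \in \mSu$, hence $\lambda_i \in \mSp \cap \mSu = \mS$, so $x \in \M$. Since the reverse inclusion $\M \subset \M_\pi \cap \M_u$ is trivial, we obtain $\M = \M_\pi \cap \M_u$. This simultaneously gives the injectivity of $\Psi'$ (two modules with equal images have equal $\pi$- and $u$-localisations, hence equal intersections, hence are equal) and the explicit formula $\Psi'{}^{-1}(A, B) = A \cap B$ for any pair $(A, B)$ in the image.
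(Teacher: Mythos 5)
Your proof is correct and follows essentially the same route as the paper: establish $\mS = \mSp \cap \mSu$ from the series descriptions, then apply it component by component along a basis of $\M$ to get $\M = \M_\pi \cap \M_u$, from which injectivity and the formula for the preimage both follow. You simply spell out more explicitly the linear-independence and uniqueness-of-expansion details that the paper leaves implicit.
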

\begin{proof}
From the descriptions of elements of $\mS$, $\mSp$, $\mSu$ and $\mE$ in 
terms of series, it follows that $\mS = \mSp \cap \mSu$. If $\M 
\in \Free^d_{\mS}$, it is isomorphic to $\mS^{h}$ for $h \leq d$
and, by applying the preceding remark component by component, we get $\M = 
\M_\pi \cap \M_u$. This  implies the injectivity of $\Psi'$ and the 
given formula for its left-inverse.
\end{proof}

Using Lemma \ref{lemma:bijection}, we can prove:
\begin{theorem}
\label{prop:bijection}
The natural map
$$\begin{array}{rcl}
  \Psi \quad : \Max_{\mSnu}^d & \longrightarrow &
\Free_{\mSpnu}^d \times \Free_{\mSunu}^d \\
\M & \mapsto & (\M_\pi,\M_u).
\end{array}$$
is injective and its image consists of pairs $(A,B)$ such that $A$ and 
$B$ generate the same $\mE$-vector space in $\mE^d$. If a pair $(A,B)$ 
satisfies this condition, its unique preimage under $\Psi$ is given by 
$A \cap B$.

Furthermore, we have the following equalities:
\begin{eqnarray*}
\Psi(\M \cap \M') & = & (\M_\pi \cap \M'_\pi, \M_u \cap \M'_u) \\
\Psi(\M +_\free \M') & = & (\M_\pi + \M'_\pi, \M_u + \M'_u)
\end{eqnarray*}
for all $\M,\M' \in \Max_{\mSnu}^d$.
\end{theorem}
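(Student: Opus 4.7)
My strategy is to deduce the theorem from a single ring-theoretic identity, $\mSpnu \cap \mSunu = \mSnu$ inside $\mE$, combined with the characterisation of maximality provided by Proposition \ref{prop:qis}. The ring identity itself follows from the explicit series descriptions in \S\ref{sec:deffirst}: membership in $\mSpnu \subset K[[u]]$ rules out strictly negative powers of $u$, while membership in $\mSunu$ imposes $v_K(a_i)+\nu i \geq 0$ on the remaining non-negative indices---exactly the defining conditions of $\mSnu$. Applied coordinatewise, this gives $\mSpnu^d \cap \mSunu^d = \mSnu^d$ inside $\mE^d$.

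For the injectivity and the preimage formula, I will show $\M = \M_\pi \cap \M_u$ inside $\mE^d$ for every $\M \in \Max^d_{\mSnu}$. The inclusion $\M \subset \M_\pi \cap \M_u$ is trivial. Conversely, the ring identity forces $\M_\pi \cap \M_u \subset \mSnu^d$; for $x$ in this intersection, $x \in \M_\pi = \M[1/\pi]$ immediately yields $\pi^n x \in \M$ for some $n$, and I use $x \in \M_u$ to produce some $m$ with $(u^\alpha/\pi^\beta)^m x \in \M$; Proposition \ref{prop:qis} together with maximality of $\M$ then gives $x \in \M$. For the image characterisation, necessity is immediate: $\M_\pi$ and $\M_u$ both generate $\M \otimes_{\mSnu}\mE \subset \mE^d$. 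For sufficiency, given a pair $(A,B)$ generating a common $\mE$-subspace, I set $\M = A \cap B$; the ring identity gives $\M \subset \mSnu^d$, and $\M$ is maximal by Proposition \ref{prop:qis} because $\pi^n y \in \M$ implies $y \in A$ (as $\pi$ is invertible in $\mSpnu$) and $(u^\alpha/\pi^\beta)^n y \in \M$ implies $y \in B$ (as $u^\alpha/\pi^\beta$ has $v_\nu = 0$ and is hence a unit in the DVR $\mSunu$). The equalities $\M_\pi = A$ and $\M_u = B$ are then proved by showing that any generator of $A$ (resp.\ $B$) can be cleared into $A \cap B$ by multiplication by a suitable power of $\pi$ (resp.\ of $u^\alpha/\pi^\beta$), exploiting the finite generation of $A$ and $B$ over their respective base rings and the fact that one of them contains a basis of the common ambient $\mE$-subspace.

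The two formulas for $\cap$ and $+_\free$ then follow formally. For intersection: $\M \cap \M'$ is maximal directly from Proposition \ref{prop:qis}, and the identities $(\M \cap \M')_\pi = \M_\pi \cap \M'_\pi$ and $(\M \cap \M')_u = \M_u \cap \M'_u$ are routine (if $\pi^n x$ lies in both $\M$ and $\M'$, it lies in $\M \cap \M'$). For the sum: $(\M + \M')_\pi = \M_\pi + \M'_\pi$ holds trivially because localisation commutes with finite sums, and the passage from $\M + \M'$ to its maximal module $\M +_\free \M'$ does not change either localisation, since the finite cokernel is annihilated simultaneously by a power of $\pi$ (hence vanishes after inverting $\pi$) and by a distinguished element of $\mSnu$ which, by Weierstrass preparation (Corollary \ref{cor:weierstrass}), becomes a unit in $\mSunu$. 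The main obstacle is the step flagged in the second paragraph---producing $m$ such that $(u^\alpha/\pi^\beta)^m x \in \M$ for $x \in \M_u \cap \mSnu^d$---because $\mSunu$ is a completion and can contain elements with "infinite denominators" in $u^\alpha/\pi^\beta$, so one must genuinely use the constraint $x \in \mSnu^d$ to control the coefficients in a $\mSunu$-expansion of $x$; a plausible backup route is to reduce to the case $\nu = 0$ via Lemma \ref{sec1:lempiso} (where maximal modules are free over $\mS_0$ by Iwasawa's theorem) and conclude via Lemma \ref{lemma:bijection}.
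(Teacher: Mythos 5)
Your overall architecture coincides with the paper's on most points: the paper also verifies that $A\cap B$ is maximal via Proposition \ref{prop:qis} (using that $\pi$ is invertible in $\mSpnu$ and $u^\alpha/\pi^\beta$ in $\mSunu$), recovers $\M_\pi=A$ and $\M_u=B$ by sandwiching $A\cap B$ between a free module and a suitable multiple of it, and obtains the sum formula from $\Max(N)_\pi=N_\pi$ and $\Max(N)_u=N_u$. The genuine divergence is injectivity. Your primary route --- proving $\M=\M_\pi\cap\M_u$ directly --- stalls exactly where you say it does: an element of $\M_u$ is an $\mSunu$-linear combination of generators of $\M$, and since $\mSunu$ is the \emph{completion} of $\mSnu[\pi^\beta/u^\alpha]$, such an element need not lie in any $(\pi^\beta/u^\alpha)^m\M$, so no $m$ with $(u^\alpha/\pi^\beta)^m x\in\M$ comes out of the definition alone. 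The gap is repairable inside your framework: granting that $\mSunu$ is flat over $\mSnu$, for $x\in\mSnu^d$ the condition $x\in\M_u$ is equivalent to $\mathrm{Ann}_{\mSnu}(x\bmod\M)\cdot\mSunu=\mSunu$, and since $\mSunu$ is a discrete valuation ring for $v_\nu$ this holds precisely when the annihilator contains a distinguished element $\lambda$; combined with $\pi^n x\in\M$, the inclusion $\M\hookrightarrow\M+\mSnu x$ is then a quasi-isomorphism and Lemma \ref{sec3.1:lemmax1} forces $x\in\M$. The paper avoids the issue altogether by taking exactly your backup route: it base-changes to $\eSnu\cong\Ri'[[u]]$, where maximal modules are free, and deduces injectivity of $\Psi$ from the injectivity of $\Phi$ (Proposition \ref{prop:tech}) and of $\Psi'$ (Lemma \ref{lemma:bijection}) via a commutative square.

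A second, smaller point: the identity $(\M\cap\M')_u=\M_u\cap\M'_u$ is not ``routine'' in the way $(\M\cap\M')_\pi=\M_\pi\cap\M'_\pi$ is, for the same completion reason --- your parenthetical argument only treats the $\pi$-side. The paper does not prove it coordinatewise; it writes $\M\cap\M'=(\M_\pi\cap\M'_\pi)\cap(\M_u\cap\M'_u)$ using the already-established preimage formula, checks that the pair $(\M_\pi\cap\M'_\pi,\,\M_u\cap\M'_u)$ lies in the image of $\Psi$ (freeness over the two principal ideal domains, plus flatness of $\mE$ to see that the two components span the same $\mE$-subspace), and concludes by uniqueness of preimages. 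You should route your intersection formula through the bijection in the same way.
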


\begin{proof} 
  
  Let $\Rpi$ in an algebraic closure of $K$, be such that
  $\Rpi^\alpha=\pi$.  Let $\Ri'=\Ri[\Rpi]$ and ${\eSnu}=\mSnu
  \otimes_{\Ri} \Ri'$. We know by Lemma \ref{sec1:lempiso} that $\eSnu$ is
  isomorphic to $\Ri'[[u]]$.  Then, the map $\Psi$
  sits in the following commutative diagram:
\begin{equation}\label{sec3.2:diagprobij1}
  \raisebox{-1cm}{
    \begin{tikzpicture}[normal line/.style={->}, font=\scriptsize]
      \matrix(m) [ampersand replacement=\&, matrix of math nodes, row
   sep = 3em, column sep = 4.5em] 
   { \Max_{\mSnu}^d \&  \Free_{\mSpnu}^d \times \Free_{\mSunu}^d \\ 
      \Free_{\eSnu} \& \Free_{\eSpnu}^d \times \Free_{\eSunu}^d \\ }; 
   \path[normal line]
      (m-1-1) edge node[auto] {$\Psi$}  (m-1-2) 
      (m-1-1) edge node[auto] {$\Max(.\otimes_{\mSnu} \eSnu)$} (m-2-1) 
      (m-1-2) edge node[auto] {$.\otimes_{\mSnu} \eSnu$} (m-2-2) 
      (m-2-1) edge node[auto] {$\Psi'$} (m-2-2);
  \end{tikzpicture}
}
\end{equation} 
By Proposition \ref{prop:tech}, the map $\M \mapsto \Max(\M
\otimes_{\mSnu} \eSnu)$ is injective and $\Psi'$ is injective by 
Lemma \ref{lemma:bijection}, from which we deduce that $\Psi$ is
injective by the commutativity of (\ref{sec3.2:diagprobij1}).

We want to prove now that if the pair $(A,B)$ belongs to $\Free_{\mSpnu}^d 
\times \Free_{\mSunu}^d$ and satisfies the condition of the theorem, 
then $\M = A \cap B$ is maximal over $\mSnu$ and $\Psi(\M) = (A,B)$. We claim 
that there exists a basis $(e_1, \ldots, e_h)$ of $A$ (over $\mSpnu$) such 
that $\M$ is included inside the $\mSnu$-module generated by the $e_i$'s. 
Indeed, let us first consider $(e_1, \ldots, e_h)$ a basis of $A$ and 
denote by $\M'$ the $\mSnu$-module generated by the $e_i$'s. Now, remark 
that, by our assumption on the pair $(A,B)$, every element $x \in B$ can 
be written as a $\mE$-linear combination of the $e_i$'s. Taking for $n$ 
the smallest valuation of the coefficients appearing in this writing, we 
get $x \in \pi^{-n} \M'_u$. Moreover, since $B$ is finitely generated over 
$\mSunu$, we can choose a uniform $n$. Replacing $e_i$ by $\pi^{-n} e'_i$ 
for all $i$, we then get $A = \M'_\pi$ and $B \subset \M'_u$. Thus $\M = A 
\cap B \subset \M'_\pi \cap \M'_u = \M'$.

Since $\mSnu$ is a noetherian ring (recall that $\nu$ is rational), we
find that $\M$ is finitely generated over $\mSnu$. Furthermore, one can
compute $\Max(\M)$ using Proposition \ref{prop:qis}: if $x$ is an
element of $\mSnu^d$ for which there exists $n$ such that $\pi^n x$
and $(u^\alpha/\pi^\beta)^n x$ belong to $\M$, then $x \in A$ (since $\pi$ is invertible
in $\mSpnu$) and $x \in B$ (since $u^\alpha/\pi^\beta$ is invertible in $\mSunu$). Thus
$x \in \M$ and $\Max(\M) = \M$, \emph{i.e.} $\M$ is maximal.

Let us prove now that $\Psi(\M) = (A,B)$. By the same argument as before, 
we find that there exists a positive integer $n$ such that $\pi^n \M' 
\subset \M \subset \M'$, from what it follows that $\M_\pi = \M'_\pi = A$. The 
method to prove that $\M_u = B$ is analogous: we first show that there 
exists a basis $(e_1, \ldots, e_h)$ of $B$ over $\mSunu$ and some elements 
$s_1, \ldots, s_h \in \mSnu$ such that:
\begin{itemize}
\item all $s_i$'s are invertible in $\mSunu$, and
\item we have $\sum s_i e_i \mSnu \subset \M \subset \sum e_i \mSnu$.
\end{itemize}
From these conditions, it follows that $\M_u$ is generated by the $e_i$'s 
over $\mSu$ and, consequently, that $\M_u = B$.

It remains to prove the claimed formulas concerning intersections and 
sums. For the intersection, we note that if $\M \cap \M' = (\M_\pi \cap 
\M_u) \cap (\M'_\pi \cap \M'_u) = (\M_\pi \cap \M'_\pi) \cap (\M_u \cap 
\M'_u)$. Hence, we just need to justify that $\M_\pi \cap \M'_\pi$ and $\M_u 
\cap \M'_u$ are free over $\mSpnu$ and $\mSunu$ respectively, and that they 
generate the same $\mE$-vector space. The freedom follows from the 
classification theorem of finitely generated modules over principal 
rings, whereas the second property is a consequence of the flatness of 
$\mE$ over $\mSpnu$ and $\mSunu$.

For the sum, we have to justify that $(\M +_\free \M')_\pi = \M_\pi + 
\M'_\pi$ and $(\M +_\free \M')_u = \M_u + \M'_u$. It is clear that $(\M 
+ \M')_\pi = \M_\pi + \M'_\pi$ and $(\M + \M')_u = \M_u + \M'_u$. Hence, 
it is enough to prove that, given a finitely generated $\mSnu$-module $N 
\in \mSnu^d$, we have $\Max(N)_\pi = N_\pi$ and $\Max(N)_u = N_u$. It is 
obvious by Proposition \ref{prop:qis}.
\end{proof}

\paragraph{Reinterpretation in the language of categories}

We introduce the ``fiber product'' category $\uFree_{\mSpnu} \otimes_ 
{\uFree_\mE} \uFree_{\mSunu}$ whose objects are triples $(A,B,f)$ where $A \in 
\uFree_{\mSpnu}$, $B \in \uFree_{\mSunu}$ and $f : \mE \otimes_{\mSpnu} A \to \mE 
\otimes_{\mSunu} B$ is an $\mE$-linear isomorphism. We have natural functors 
in both directions between $\uMax_{\mSnu}$ and $\uFree_{\mSpnu} \otimes_ {\uFree_\mE} 
\uFree_{\mSunu}$: to an object $\M$ of $\Max^d_{\mSnu}$, we associate the triple 
$(\mSpnu \otimes_S \M, \mSunu \otimes_S \M, f)$ where $f$ is the canonical 
isomorphism, and conversely, to a triple $(\M_\pi, \M_u, f)$, we associate 
the fiber product of the following diagram (which turns out to be
free of finite rank over $\mSnu$):
\begin{equation}
\raisebox{-1cm}{
\begin{tikzpicture}[normal line/.style={->}, font=\scriptsize]
\matrix(m) [ampersand replacement=\&, matrix of math nodes, row sep = 3em, column sep =
2.5em]
{ \& \& \M_u  \\
  \M_\pi   \& \mE \otimes_{\mSpnu} \M_\pi \& \mE \otimes_{\mSunu} \M_u \\};
\path[normal line]
(m-1-3) edge node[auto] {} (m-2-3)
(m-2-1) edge node[auto] {} (m-2-2)
(m-2-2) edge node[auto] {} (m-2-3);
\end{tikzpicture}
}
\end{equation}

Theorem \ref{prop:bijection} then says that these two functors are 
equivalences of categories inverse one to the other. Actually, this
result can be generalized to non-free modules as follows.

\begin{proposition}
  The functor $\uMod_{\mSnu} \to \uMod_{\mSpnu} \otimes_ {\uMod_\mE}
\uMod_{\mSunu}$, $\M \mapsto (\mSpnu \otimes_S \M, \mSunu \otimes_S \M)$ 
factors through $\uMod^\qis_{\mSnu}$ and the resulting functor
$$\uMod^\qis_{\mSnu} \to \uMod_{\mSpnu} \otimes_ {\uMod_\mE}
\uMod_{\mSunu}$$
is an equivalence of categories.
\end{proposition}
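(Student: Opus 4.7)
The plan is to deduce this equivalence from Theorem \ref{prop:bijection}, which handles the free/maximal case, by treating torsion components separately and then exploiting a calculus of fractions.

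For the factorization through $\uMod^\qis_{\mSnu}$, it suffices to check that any quasi-isomorphism $f : \M \to \M'$ becomes an isomorphism after tensoring with $\mSpnu$ and with $\mSunu$. The kernel and cokernel of $f$ are killed jointly by some $\pi^n$ and by a distinguished element $\lambda \in \mSnu$; since $\pi$ is invertible in $\mSpnu$ and $\lambda$ (satisfying $v_\nu(\lambda)=0$) is a unit in the discrete valuation ring $\mSunu$, both $\mSpnu \otimes_{\mSnu} f$ and $\mSunu \otimes_{\mSnu} f$ kill these kernels and cokernels, hence are isomorphisms. The Gabriel--Zisman universal property of the localized category then yields the factorization.

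For essential surjectivity, given a triple $(A, B, f)$ in the target, I would decompose $A = A_{\text{fr}} \oplus A_{\text{tor}}$ and $B = B_{\text{fr}} \oplus B_{\text{tor}}$ using the structure theorem over the PIDs $\mSpnu$ and $\mSunu$. The torsion parts vanish after tensoring with the field $\mE$, so $f$ identifies only $\mE \otimes A_{\text{fr}}$ with $\mE \otimes B_{\text{fr}}$. Theorem \ref{prop:bijection} then supplies a maximal $\mSnu$-module $\M_{\text{fr}} = A_{\text{fr}} \cap B_{\text{fr}}$ (inside a common $\mE^d$ via $f$) whose image under the functor is $(A_{\text{fr}}, B_{\text{fr}}, f)$. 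Viewing $A_{\text{tor}}$ as an $\mSnu$-module via the inclusion $\mSnu \hookrightarrow \mSpnu$, one has $A_{\text{tor}} \otimes_{\mSnu} \mSunu = 0$ (any polynomial annihilator of $A_{\text{tor}}$ becomes invertible in $\mE$ and hence in the further localization), so $A_{\text{tor}}$ maps to $(A_{\text{tor}}, 0)$; symmetrically for $B_{\text{tor}}$. The direct sum $\M_{\text{fr}} \oplus A_{\text{tor}} \oplus B_{\text{tor}}$ is then a preimage of $(A, B, f)$.

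Full faithfulness proceeds via the calculus of fractions satisfied by quasi-isomorphisms (the pylonet structure of \cite{MR2951749} alluded to in the remark above): a morphism $\M \to \Nc$ in $\uMod^\qis_{\mSnu}$ is represented by a roof $\M \xleftarrow{s} \M' \xrightarrow{g} \Nc$ with $s$ a quasi-isomorphism, which the functor sends to $(g_\pi \circ s_\pi^{-1}, g_u \circ s_u^{-1})$. Faithfulness follows because if the image vanishes, then both $g_\pi$ and $g_u$ are zero, so $g$ factors through a module annihilated by $\pi^n$ and by a distinguished element, hence is zero in $\uMod^\qis_{\mSnu}$. For fullness, given a compatible pair $(\varphi_\pi, \varphi_u)$, take a finite generating set $(x_i)$ of $\M$ and observe that the $\mE$-compatibility forces $\varphi_\pi(x_i)$ and $\varphi_u(x_i)$ to arise from a common element of $\Nc$ after multiplying $x_i$ by suitably large powers of $\pi$ and of $u^\alpha/\pi^\beta$; the $\mSnu$-submodule $\M' \subset \M$ generated by these rescaled generators is quasi-isomorphic to $\M$ and carries a canonical map $g : \M' \to \Nc$ inducing $(\varphi_\pi, \varphi_u)$. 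The main obstacle lies here: verifying that $g$ is well-defined and independent of the lifting choices, which should reduce via Proposition \ref{prop:qis} and Theorem \ref{prop:bijection} to the rigidity of the maximal modules attached to $\M'$ and its image in $\Nc$.
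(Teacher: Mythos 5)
The paper offers no proof of this proposition (it is left to the reader), so your sketch can only be judged against the machinery the paper does develop; your overall architecture --- factor through the localization, reduce essential surjectivity to Theorem \ref{prop:bijection} plus a torsion analysis, and describe morphisms by roofs --- is the natural one. But one step is wrong as written. In the essential surjectivity argument, $A_{\text{tor}}$ ``viewed as an $\mSnu$-module via the inclusion $\mSnu \hookrightarrow \mSpnu$'' is in general \emph{not} finitely generated over $\mSnu$, hence is not an object of $\uMod_{\mSnu}$. For $\nu=0$ and $\Ri=\Z_p$, take $A_{\text{tor}} = \mSp/(u) \cong \Q_p$: the $\mS_0$-action factors through $\mS_0/(u)=\Z_p$, and $\Q_p$ is not finitely generated over $\Z_p$; likewise $\mSu/(\pi) \cong k((u))$ is not finitely generated over $k[[u]]$. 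The correct preimages are the cyclic modules $\mSnu/(t_i)$ and $\mSnu/(\pi^{d_j})$, where the $t_i \in \mSnu$ are distinguished generators (obtained via Corollary \ref{cor:weierstrass}) of the elementary divisors of $A_{\text{tor}}$: then $\mSnu/(t_i)\otimes_{\mSnu}\mSpnu \cong \mSpnu/(t_i)$ while $\mSnu/(t_i)\otimes_{\mSnu}\mSunu = 0$ because $t_i$ is a unit in $\mSunu$, and symmetrically for $\pi^{d_j}$. The step is easily repaired, but as stated the claimed preimage does not exist in the category.

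The second genuine gap is the one you flag yourself in the fullness argument: sending rescaled generators to chosen lifts does not obviously yield an $\mSnu$-linear map, since relations among the $\pi^n x_i$ and $(u^\alpha/\pi^\beta)^m x_i$ need not be respected by arbitrary lifts, and you also assume without justification that quasi-isomorphisms in the full category of finitely generated modules (not merely the torsion-free ones, for which the paper cites the pylonet structure) admit a calculus of fractions. Both issues can be bypassed simultaneously by not choosing lifts at all: given a compatible pair $(\varphi_\pi,\varphi_u)$, form the fiber product $P = (\mSpnu\otimes_{\mSnu}\Nc) \times_{\mE\otimes_{\mSnu}\Nc} (\mSunu\otimes_{\mSnu}\Nc)$. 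The $\mE$-compatibility makes $x \mapsto (\varphi_\pi(x),\varphi_u(x))$ a well-defined $\mSnu$-linear map $\M \to P$, and the canonical map $\Nc \to P$ is a quasi-isomorphism (its kernel and cokernel are killed by a power of $\pi$ and by a distinguished element), so the roof $\M \to P \leftarrow \Nc$ is the required morphism in $\uMod^\qis_{\mSnu}$; faithfulness and the independence of choices then follow from your (correct) observation that any morphism factoring through a finite-length module vanishes in the localized category. Until these two repairs are made, the proposal establishes neither essential surjectivity nor fullness.
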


\begin{proof}
Left to the reader.
\end{proof}

\subsubsection{Normal forms for modules over $\mSpnu$ and
$\mSunu$}\label{subsec3.1}

As $\mSpnu$ and $\mSunu$ are Euclidean rings there exists a good notion 
of rank as well as Hermite Normal Forms for matrix over these rings. In 
this section, we state propositions giving the shape of Hermite Normal 
Form together with algorithms with oracles to compute them. We recall 
that an algorithm with oracle is a Turing machine which has access to 
oracles to store elements of the base ring and perform all usual ring 
operations: test equality, computation of the valuation, addition, 
opposite, multiplication and Euclidean division.  We will measure the 
time complexity of the algorithms by counting the number of calls to the 
oracles. 
Classically, we then derive some consequences which will be used in this 
paper. For the complexity analysis, we denote by $\theta$ a real number 
such that product of two $d\times d$ matrices with coefficient in 
$\mSnu$ can be done in $O(d^\theta)$ ring operations. With a naive 
algorithm, we can take $\theta=3$ and with the current best known 
algorithm of Coppersmith and Winograd \cite{MR1056627}, $\theta=2.376$.

\begin{proposition}\label{sec3.1:prop1}
  Let $M=(m_{ij})\in M_{d\times d'}(\mSpnu)$,
let $r$ be the rank of $M$. Then, there exists an invertible matrix 
$P$ such that $M.P=T$ with 
\begin{equation}\label{eqformmat1}
T = 
\raisebox{-1.2cm}{
\begin{tikzpicture}[ms/.style={densely dotted}]
\matrix [ampersand replacement=\&, left delimiter=(, right delimiter=)]
{
\node (T1) {$t_{1}$}; \& \& \&  \& \node (z1) {$0$}; \& \node ()
{}; \&
\node (z2) {$0$}; \\
\node (star1) {$\star$}; \& \& \&  \& \& \& \\
 \& \& \& \& \& \& \\
 \& \& \& \& \& \& \\
\& \& \& \node (Td) {$t_{r}$}; \& \& \& \\
\& \& \& \node (star2) {$\star$}; \& \& \& \\
\node () {}; \& \& \& \& \& \& \\
\node (star3) {$\star$}; \& \& \& \node (star4) {$\star$}; \& \node
(z3) {$0$}; \& \& \node (z4) {$0$}; \\
};
\draw[ms] (T1) -- (Td);
\draw[ms] (star1) -- (star2);
\draw[ms] (star1) -- (star3);
\draw[ms] (star2) -- (star4);
\draw[ms] (star3) -- (star4);
\draw[ms] (z1) -- (z2);
\draw[ms] (z1) -- (z3);
\draw[ms] (z3) -- (z4);
\draw[ms] (z2) -- (z4);
\end{tikzpicture}
},
\end{equation}
where 
\begin{itemize}
\item for $i=1, \ldots , r$, $t_{i} = u^{d_j}+ \sum_{i=0}^{d_j-1} b_j
u^j$ with $v_K(b_j)+\nu
(j-d_j) > 0$ ;
\item for $i=1, \ldots, r$, $T_{l(i),i}=t_i$ and $l$ is a scrictly increasing function from $\{1, \ldots r\}$ to
$\{ 1, \ldots , d \}$ such that $l(1)=1$.
\end{itemize}
The matrix $T$ is said to be an echelon form of $M$. 
Let $d_{\max}$ be the maximal Weierstrass degree of the
entries of $M$, an echelon form of $M$ can be computed in
$O(d\cdot d'\cdot d_{\max} + \max(d^\theta\cdot d', d'^{\theta}\cdot d)
\log(2d'/d))$  ring operations

If the echelon form moreover
satisfies:
\begin{itemize}
\item all entries on the $l(i)^{th}$-row are elements of
$K[u]$ of degree
$< d_i$.
\end{itemize}
then $T$ is unique with these properties and is called the Hermite
Normal Form. The Hermite Normal form of $M$ can be computed from an
echelon form of $M$ at the expense of an additional $O(r^2)$ ring
operations.
\end{proposition}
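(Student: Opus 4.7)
The plan is to compute an echelon form first by column-wise Euclidean reduction, normalize the pivots via Weierstrass preparation, and then back-substitute for the Hermite normal form.

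For existence of the echelon form, I would exploit that $\mSpnu$ is Euclidean with Weierstrass degree playing the role of Euclidean function (as established in Section~\ref{sec:deffirst}). Processing rows from top to bottom, inside the first nonzero row of the current submatrix I would pair up entries and reduce them via Euclidean division (Proposition~\ref{sec2:prop1}) through column operations, until a single nonzero entry remains: this is the pivot $t_i$ at position $(l(i),i)$. Restricting to the submatrix strictly below and to the right and iterating gives the staircase pattern with $l$ strictly increasing. To bring $t_i$ into the required form $u^{d_i} + \sum_{j=0}^{d_i - 1} b_j u^j$ with $v_K(b_j) + \nu(j - d_i) > 0$, I would multiply $t_i$ by a suitable power of $\pi$ to realise it as a distinguished element of $\mSnu$, apply Weierstrass preparation (Corollary~\ref{cor:weierstrass}) to factor it as $q \cdot h$ with $q$ invertible in $\mSnu$ (hence in $\mSpnu$) and $h$ of the required shape, and rescale the $i$-th column by the inverse of this unit factor.

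For the Hermite normal form, I would perform a standard back-substitution: for each entry $m_{l(i), j}$ with $j > i$, divide by $t_i$ via Proposition~\ref{sec2:prop1} to obtain $m_{l(i), j} = q t_i + r$ with $\deg r < d_i$, and subtract $q$ times column $i$ from column $j$. Since there are at most $r(r-1)/2$ positions to reduce, this contributes $O(r^2)$ ring operations on top of the echelon cost. For uniqueness, I would argue that if $T$ and $T'$ are two Hermite forms obtained from $M$, then $T = T' P$ with $P$ invertible in $M_{d' \times d'}(\mSpnu)$; the monic normalization of the pivots combined with the degree bounds on the entries in each pivot row forces the first column of $P$ to be the first standard basis vector, and induction on the column index yields $P = I$.

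The main technical obstacle is the complexity bound. The linear term $O(d \cdot d' \cdot d_{\max})$ accounts for the cost of the underlying Euclidean divisions: each pivot has Weierstrass degree at most $d_{\max}$, and Algorithm~\ref{algo1} performs such a division in $O(d_{\max})$ ring operations, with $O(d \cdot d')$ divisions in total. The more delicate term $\max(d^\theta d', d'^\theta d) \log(2d'/d)$ would require batching the column operations into block-matrix products through a divide-and-conquer scheme with $O(\log(2d'/d))$ levels, each level combining matrices whose dimensions account for the $\max(d^\theta d', d'^\theta d)$ factor; the subtle point, which I expect to be the hardest, is to make sure that the unit factors produced by Weierstrass preparation interact consistently with the batched operations, so that the asymptotic gains from fast matrix multiplication are not eaten up by bookkeeping on the sizes of the Weierstrass polynomials appearing as pivots.
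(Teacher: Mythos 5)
Your proposal follows essentially the same route as the paper: the paper's proof simply observes that $\mSpnu$ is Euclidean and invokes \cite[Th\'eor\`eme 3.1]{MR1135749} for the echelon-form algorithm together with the stated complexity, then passes to the Hermite form by the same back-substitution you describe. The divide-and-conquer batching that you correctly single out as the delicate point is precisely the content of that cited theorem, so the paper does not work it out either; the remaining ingredients of your argument (normalizing each pivot via Weierstrass preparation after clearing a power of $\pi$, and proving uniqueness by showing the change-of-basis matrix is the identity) are correct and fill in what the paper leaves implicit.
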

\begin{proposition}\label{sec3.1:prop2} Let $M\in M_{d\times
  d'}(\mSunu)$, let $r$ be the rank of $M$. Then there exists an
invertible matrix $P$ such that $M.P=T$ and
\begin{equation}\label{eqformmat2} T = \raisebox{-1.2cm}{
\begin{tikzpicture}[ms/.style={densely dotted}] \matrix [ampersand
replacement=\&, left delimiter=(, right delimiter=)] { \node (T1)
{$\pi^{d_1}$}; \& \& \&  \& \node (z1) {$0$}; \& \node () {}; \& \node
(z2) {$0$}; \\ \node (star1) {$\star$}; \& \& \&  \& \& \& \\ \& \& \&
\& \& \& \\ \& \& \& \& \& \& \\ \& \& \& \node (Td) {$\pi^{d_r}$}; \&
\& \& \\ \& \& \& \node (star2) {$\star$}; \& \& \& \\ \node () {}; \&
\& \& \& \& \& \\ \node (star3) {$\star$}; \& \& \& \node (star4)
{$\star$}; \& \node (z3) {$0$}; \& \& \node (z4) {$0$}; \\ };
\draw[ms] (T1) -- (Td); \draw[ms] (star1) -- (star2); \draw[ms]
(star1) -- (star3); \draw[ms] (star2) -- (star4); \draw[ms] (star3) --
(star4); \draw[ms] (z1) -- (z2); \draw[ms] (z1) -- (z3); \draw[ms]
(z3) -- (z4); \draw[ms] (z2) -- (z4); \end{tikzpicture} },
\end{equation} where \begin{itemize} \item for $i=1, \ldots, r$,
$T_{l(i),i}=\pi^{d_i}$ where $l$ is a strictly increasing function
from $\{1, \ldots r\}$ to $\{ 1, \ldots , d \}$ such that $l(1)=1$.
\end{itemize} The matrix $T$ is said to be an echelon form of $M$.
An echelon form of $M$
can be computed in $O(d.d')+\max
(d^\theta\cdot d',d'^\theta\cdot d)\log(2d'/d))$ ring operations.  

If the echelon form moreover satisfies
\begin{itemize}
\item the
entries on the $l(i)^{th}$-row are representatives modulo
$\pi^{d_i}$.  
\end{itemize} 
then $T$ is unique with these
properties and the called the Hermite Normal Form of $M$. The Hermite
Normal form of $M$ can be computed at the expense of an additional
$O(r^2)$ ring operations.
\end{proposition}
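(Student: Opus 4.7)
The plan leverages the fact, established at the end of Section \ref{sec:deffirst} (in the proof that $\mSunu$ is a discrete valuation ring), that every nonzero element of $\mSunu$ factors uniquely as $\pi^n \cdot v$ with $n = v_\nu(\cdot) \in \N$ and $v \in \mSunu^\times$. This makes the Hermite Normal Form theory over $\mSunu$ structurally identical to the classical theory over $\Z_p$, so the proposition reduces to adapting the standard algorithm, tracking the complexity carefully.

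For existence of the echelon form I would proceed by a column-reduction induction. Given $M$, find the topmost row containing a nonzero entry, locate among those entries one of minimum $\pi$-adic valuation, call it $\pi^{d_1}$ times a unit, and bring it to position $(l(1),1)$. Multiplying the column by the inverse of the unit part turns this entry into exactly $\pi^{d_1}$. Every other entry in row $l(1)$ has valuation $\geq d_1$ and hence is an $\mSunu$-multiple of $\pi^{d_1}$, so a single column operation kills it. Recurse on the submatrix strictly to the right and strictly below $(l(1),1)$; this yields the shape of (\ref{eqformmat2}) together with the strictly increasing pivot positions $l(i)$.

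For the complexity $O(dd') + \max(d^\theta d', d'^\theta d)\log(2d'/d)$, naive sequential reduction is too slow, so I would employ a divide-and-conquer scheme in the spirit of Storjohann's fast Hermite reduction. Split the $d'$ columns into two halves of roughly equal size, recursively reduce each half to echelon form, then merge the two echelon forms by another echelon reduction. Each merge amounts to a block manipulation on matrices whose active dimensions are bounded by the current rank, and can be expressed in terms of $O(1)$ matrix products of size at most $\max(d,d')$, contributing $O(\max(d^\theta d', d'^\theta d))$ ring operations; the recursion depth accounts for the $\log(2d'/d)$ factor, and the initial $O(dd')$ term covers reading the matrix and handling the base cases. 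This complexity analysis is the most delicate step, and it follows \emph{verbatim} the $\Z_p$-case once one has observed that all required operations in $\mSunu$ (valuation, inversion of units, Euclidean division by $\pi^{d_i}$) are available in a single oracle call.

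For uniqueness of the HNF, suppose $T$ and $T'$ are both Hermite Normal Forms of $M$; write $T = T'Q$ with $Q \in \mathrm{GL}_{d'}(\mSunu)$. Processing the pivot rows $l(1), l(2), \ldots, l(r)$ from top to bottom, the combination of the echelon shape and the reduction condition (entries on row $l(i)$ are representatives modulo $\pi^{d_i}$) forces the corresponding diagonal entries of $Q$ to be $1$ and the off-diagonal entries to vanish, hence $Q = \mathrm{Id}$; this is exactly the classical argument over $\Z$. Finally, converting an echelon form to HNF is a matter of performing, for each entry on a pivot row $l(i)$, one Euclidean division in $\mSunu$ by $\pi^{d_i}$; there are at most $O(r^2)$ such entries, giving the claimed additional cost.
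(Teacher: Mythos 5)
Your proposal is correct and follows essentially the same route as the paper, which simply observes that $\mSunu$ is Euclidean (indeed a discrete valuation ring) and then invokes the asymptotically fast triangularization result of \cite[Theoreme 3.1]{MR1135749} for the echelon form and its complexity, with the passage to the Hermite Normal Form costing the extra $O(r^2)$ reductions. You have merely unpacked what the cited reference provides (the divide-and-conquer column reduction and the classical uniqueness argument), so the two proofs coincide in substance.
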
 

\begin{proof}
The proof of the previous propositions as well as algorithms to
compute the echelon form of $M$ with the given complexity is an
immediate consequence of \cite[Theoreme 3.1]{MR1135749} together with
the fact that $\mSpnu$ and $\mSunu$ are Euclidean rings.  Moreover for
all  $x,y \in \mSpnu$ one can compute the $\gcd(x,y)$ in
$O(\deg_W(y))$ ring operations. From its triangle form, one can
then compute the Hermite Form of $M$ with coefficients in $\mSpnu$ at
the expense of $O(d\cdot r\cdot d_{\max})$ ring operations.
\end{proof}
\begin{remark}
  We deduce from this proposition that if $M\in M_{d\times
  d'}(\mSpnu)$
is a full rank matrix, there
exists $P$ such that $M\cdot P$ is a matrix of the form
(\ref{eqformmat1}) with all coefficients in $K[u]$.
In the same way, if $M\in M_{d \times d'}(\mSunu)$ is a full rank matrix
then there exists an invertible matrix $P$
such that $M\cdot P$ has the form (\ref{eqformmat2}) with all entries
defined modulo $\pi^{\max\{d_1, \ldots , d_r\}}$.
\end{remark}

Let $\mSloc$ be $\mSunu$ or $\mSpnu$. We derive some consequences of
the existence of triangle forms and Hermite Normal Form for the
representation and computation with finitely generated sub-$\mSloc$-modules
of $\mSloc^d$.  We can represent a finitely generated
sub-$\mSloc$-module $\M$ of $\mSloc^d$ by a $d\times d$ matrix $M$
giving $d$ generators of $\M$ in the canonical basis of $\mSloc^d$
since every sub-module of $\mSloc^d$ has dimension at most $d$.
Keeping the same notations, one can compute the module of syzygies of
$\M$. For this it is enough to compute $R$, a matrix of maximal rank such that
$M\cdot R=0$ which can easily be done by computing an echelon form of $M$.
Given a vector $\mathscr{V} \in \mSloc^d$ provided by its coordinates
vector $V$ in the canonical basis, one can check efficiently if
$\mathscr{V} \in \M$ by finding a vector $X$ such that $M.X=V$ which
can also be done with the echelon form of $M$.

Let $M$ and $M'$ representing the modules $\M$ and $\M'$, one can
compute a matrix representing the module $\M+\M'$ by computing the
echelon form of the matrix $(M M')$ and taking the $d$ first columns. 
One can compute the intersection of $\M$ and $\M'$ in the same way by
finding $R$ and $R'$ such that $(MM')\left( \begin{array}{c} R \\ R'
\end{array}  \right)=0$.

\subsubsection{Consequences for algorithmics}

In view of the results of \S \ref{sec:useful} and \S
\ref{subsec3.1}, 
we shall represent a maximal $\mSnu$-module 
$\M$ living in some $\mSnu^d$ as a pair $(A,B)$ where $A$ (resp. $B$) is 
the matrix with coefficients in $\mSpnu$ (resp. in $\mSunu$) in Hermite 
Normal Form representing $\mSpnu \otimes_{\mSnu} \M$ (resp. $\mSunu
\otimes_{\mSnu} 
\M$). 

The second part of Theorem \ref{prop:bijection} tells us that it
is very easy to compute intersections and ``maximal-sums'' of
$\mSnu$-modules with this representation. Indeed, we just have to
perform the same operations on each component, and we have already
explained in \S \ref{subsec3.1} how to do it efficiently. As the
Hermite Normal Form is unique, it is also very easy to check the
equality of two maximal sub-$\mSnu$-modules of $\mSnu^d$. Using only
the echelon form of the matrices $A$ and $B$ it is also possible to
test membership.

Even better, this representation is also very convenient for many other 
operations we would like to perform on $\mSnu$-modules. Below we detail 
three of them.
First, let $\M \subset \mSnu^d$ be a maximal $\mSnu$-module. By definition, 
the \emph{saturation} of $\M$ in $\mSnu^d$ is the module
$$\M_\sat = \big\{ \,x \in \mSnu^d \quad | \quad \exists n \in \N, \,\,
\pi^n x \in \M \big\}.$$
It follows from Proposition \ref{prop:qis} that $\M_\sat$ is maximal over 
$\mSnu$, and we would like to compute it. For that, working with our
representation, we need to compute $(\M_\sat)_\pi$ and $(\M_\sat)_u$.
But, we have $(\M_\sat)_\pi = \M_\pi$ and 
$$(\M_\sat)_u = \big\{ \,x \in \mSunu^d \quad | \quad \exists n \in \N, 
\,\, \pi^n x \in \M_u \big\}.$$
The computation of $(\M_\sat)_\pi$ is then for free, whereas the computation 
of $(\M_\sat)_u$ can be achieved using Smith forms, which is here quite 
efficient due to the fact that $\mSunu$ is a discrete valuation ring.
An important special case is when $\M$ has rank $d$ over $\mSnu$. Then
$(\M_\sat)_u$ is always equal to $\mSunu^d$. Thus, in this case, if $\M$
is represented by the pair of matrices $(A,B)$, then $\M_\sat$ is just
represented by the pair $(A, I)$ where $I$ is the identity matrix.

More generally, one can consider the following situation. Let $\M \in 
\Max^d_{\mSnu}$ and $\M' \in \Max^d_{\mSpnu}$. We want to compute $\M \cap 
\M'$, which is a maximal module over $\mSnu$. As before, we need to determine 
$(\M \cap \M')_\pi$ and $(\M \cap \M')_u$ and one can check that:
\begin{eqnarray*}
(\M \cap \M')_\pi & = & \M_\pi \cap \M'_\pi \\
(\M \cap \M')_u & = & \M_u \cap \M'_u.
\end{eqnarray*}
Note that, here, $\M'_u$ is vector space over $\mE$. As before, the 
intersection $\M_u \cap \M'_u$ can be computed using Smith forms and,
if $\M'$ has rank $d$ over $\mSpnu$, we just have $\M'_u = \mE^d$ and so
$(\M \cap \M')_u = \M_u$.

The third example we would like to present is obtained from the previous 
one by inverting the roles of $\mSpnu$ and $\mSunu$: we take $\M \in 
\Free^d_{\mSnu}$ and $\M' \in \Free^d_{\mSunu}$ and we want to compute $\M 
\cap \M'$. We then have $(\M \cap \M')_\pi = \M_\pi \cap \M'_\pi$ and $(\M 
\cap \M')_u = \M_u \cap \M'$. Here a new difficulty occurs: $\M'_\pi$ is a 
$\mE$-vector space and so, in previous formulas, it appears an 
intersection between a free module over $\mSpnu$ and a $\mE$-vector space. 
Again, one can compute this Smith form. However, it is not so efficient 
as before since $\mSpnu$ is just an Euclidean ring, and not a discrete 
valuation ring. Anyway, it remains true that, in the case where $\M'$ has 
full rank, then $\M'_\pi = \mE^d$. So, in this case, $(\M \cap \M')_\pi$ is 
just equal to $\M_\pi$ and the computation of $(\M \cap \M')_\pi$ becomes 
very easy.

\subsubsection{Further localisations}

We remark that the matrix appearing in Proposition \ref{sec3.1:prop2} 
has coefficients in $\mSunu$ which is a discrete valuation ring while 
the matrix of Proposition \ref{sec3.1:prop1} has coefficients in 
$\mSpnu$ which is only Euclidean. For certain applications, it can be 
more convenient to compute with elements in a discrete valuation ring; 
for instance, the computation of the Smith Normal Form can be made 
faster in a discrete valuation ring.

It is actually possible to work only over discrete valuation rings by 
localising further. More precisely, for any element $a \in \bar K$ 
(where $\bar K$ is an algebraic closure $\bar K$ of $K$) with valuation 
$> \nu$, we have a canonical injective morphism $\mSpnu \rightarrow \bar 
K[[u-a]]$ which maps a series to its Taylor expansion at $a$. Hence, if 
$\M_p$ is a sub-$\mSpnu$-module of $\mSpnu^d$, one can consider 
$\M_{p,a} = \M_p \otimes_{\mSpnu} \bar K[[u-a]] \subset \bar K[[u-a]]^d$ 
for all element $a$ as before. Moreover, if $\M_p$ has maximal 
rank, all $\M_{p,a}$'s are trivial (\emph{i.e.} equal to $\bar 
K[[u-a]]$) expect a finite number of them (which are those for which $a$ 
is a root of one of the $t_i$'s of Proposition \ref{sec3.1:prop1}). In 
addition, the map:
$$\begin{array}{rcl}
  \Xi \quad : \Mod_{\mSpnu}^d & \longrightarrow &
  \prod_{a \in \overline{\Ri}} \Mod_{\bar K[[u-a]]}^d \\
    \M_p & \mapsto & (\M_{p,a})_a
\end{array}$$
is injective and commutes with sums and intersections. Hence, one can
substitute to $\M_p$, the (finite) family consisting of all non trivial
$\M_{p,a}$'s. This way, we just have to work with modules defined over
discrete valuation rings.

Note finally that there exist algorithms to compute one 
representation from the other. Indeed, remark first that computing the 
image of $\M_p$ by $\Xi$ is trivial if $\M_p$ is represented by a matrix 
of generators: it is enough to map all coefficients of this matrix to 
all $\bar K [[u-a]]$'s. Going in the other direction is more subtle but
is explained In \cite{caruso-LU}, \S 2.3.

\subsection{A generalisation of Iwasawa's theorem and
applications}\label{subsec:iwasawa}

The aim of this subsection is to present an algorithm with oracle to 
compute the maximal module associated to a $\mSnu$-module. Moreover, as 
a byproduct of our study, we will derive an upper bound on the number of 
generators of a maximal sub-$\mSnu$-module of $\mSnu^n$.

The idea of our construction (inspired by an algorithm of Cohen) is to 
consider the matrix of relations of a module and to perform elementary 
operations preserving quasi-isomorphisms to put this matrix in a certain 
form. In order to do so, we first need a way to compute the matrix of 
relations of a module or at least a certain approximation of it.
Let $\M$ be a torsion-free finitely generated $\mSnu$-module and let
$(e_1, \ldots, e_k) \in \M^k$ be a family of generators of $\M$.  We denote by
$\Rc$ the module of relations of $(e_1, \ldots, e_k)$ that is the set
of $(\lambda_1, \ldots, \lambda_k) \in \mSnu^k$ such that $
\sum_{i=1}^k \lambda_i e_i=0$. Let $r$ be the rank of $\M
\otimes_{\mSnu} \mSpnu$. From the exact sequence
\begin{equation}\label{eq:exact}
0 \rightarrow \Rc
\otimes_{\mSnu} \mSpnu \rightarrow \mSpnu^k \rightarrow \M
\otimes_{\mSnu} \mSpnu \rightarrow 0,
\end{equation}
we deduce that $\Rc \otimes_{\mSnu} \mSpnu$ is a free module over 
$\mSpnu$ of rank $\ell=k-r$. Let $(f_1, \ldots, f_{\ell})$ be a basis of 
$\Rc \otimes_{\mSnu} \mSpnu$ and set $\Rc'=\oplus_{i=1}^{\ell} 
(\mSpnu\cdot f_i \cap \mSnu^k)$. Apparently, $\Rc'$ is a 
sub-$\mSnu$-module of $\Rc$ which is free of rank $\ell$. Indeed, if 
$n_i$ denotes the smallest integer such that $\pi^{n_i}\cdot f_i \in 
\mSnu^k$, then the family $(\pi^{n_i}\cdot f_i)$ is a basis of $\Rc'$.
Moreover, we have the inclusion $\Rc' \supset \pi^N \Rc$ for
a certain $N$ since $\Rc' \otimes_{\mSnu} \mSpnu = \Rc \otimes_{\mSnu}
\mSpnu$. Now, from the knowledge of the matrix $M \in M_{d \times
k}(\mSnu)$ whose column vectors are the coordinates of $e_i$ in the
canonical basis of $\mSnu^d$, we can compute a matrix $R' \in M_{k
\times \ell}(\mSnu)$  of generators of $\Rc'$ using the algorithms of
\S \ref{subsec3.1}.  We have by definition $M.R'=0$. Of course in the
above construction, we can replace, {\it mutatis mutandis} the
localisation with respect to $\pi$ by the localisation with respect to
$u^\alpha/\pi^\beta$. 

\subsubsection{An algorithm to compute the maximal module}
We start with a couple of matrices $M=(m_{i,j}) \in M_{d \times
k}(\mSnu)$ and $R=(r_{i,j}) \in M_{k \times \ell}(\mSnu)$
representing the generators of $\M$ embedded in $\mSnu^d$ and a
sub-module of $\Rc$ containing $\pi^N \Rc$ for
a certain $N$. We are going to prove by induction that we can put $R$
in triangular form by using elementary operations on the rows of $R$
and the columns of $M$ which preserve $\M$ up to quasi-isomorphism. We 
suppose that for a positive integer $i_0$ there is a strictly increasing 
function $t : [1,i_0] \rightarrow \N^*$ such that
\begin{itemize}
  \item for all $i=1, \ldots, i_0-1$, for
$j > i$, and $t(i) \leq m < t(i+1)$, $r_{j,m}=0$ ; 
\item for
all $i=1, \ldots, i_0$,
for all
$j > t(i)$, $r_{i,j}=0$.
\end{itemize}
The matrix $R$ has the following shape:

\begin{equation}\label{sec3.2:formR} 
R = \raisebox{-1.2cm}{
\begin{tikzpicture}[ms/.style={densely dotted}] 
\matrix [ampersand replacement=\&, left delimiter=(, right delimiter=)] 
{ \node (T1) {$r_{1,t(1)}$}; \& \& \&  \& \&  \\ 
\node (star1) {}; \& \& \&  \& \& \& \\ \& \& \&	\& \& \& \\ 
\& \& \& \& \& \& \\ 
\& \& \& \node (Td) {$r_{i_0,t(i_0)}$}; \& \& \& \\ 
\& \& \& \& \& \& \\ 
\node () {}; \& \& \& \& \node (z1) {$\star$}; \& \node () {}; \& \node (z2)
  {$\star$}; \\ 
  \node () {}; \& \& \& \& \& \& \\ 
\node (star3) {}; \& \& \& \& \node (z3) {$\star$}; \& \& \node (z4) {$\star$}; \\ };
\draw[ms] (T1) -- (Td); 
\draw[ms] (z1) -- (z2); 
\draw[ms] (z1) -- (z3); 
\draw[ms] (z3) -- (z4); 
\draw[ms] (z2) -- (z4); 
\end{tikzpicture} },
\end{equation}
where the blanks represent $0$ entries.

We set $t(i_0+1)$ to be the first integer $t$ 
such that $t(i_0) < t \leq \ell$
and there exists a $j \geq i_0+1$ with $r_{j,t}\neq 0$. If no such
integer exists then we have finished.
In order to describe operations on rows (resp. columns) of a matrix
$T$ of dimension $k \times \ell$
it is convenient to denote the row vectors of $T$ (resp. the column
vectors of $T$) by $L_i(T)$ for $i=1,\ldots, k$ (resp. $C_i(T)$ for
$i=1,\ldots, \ell$). We say that the
condition $\Cond(i)$ on $R$ is satisfied if there exist two
different indices $j_0, j_1 \in \{ 1, \ldots, k\}$ such that
$r_{j_0,t(i)}\cdot r_{j_1,t(i)}\neq 0$, $v_\nu(r_{j_0,t(i)})
\leq v_\nu(r_{j_1,t(i)})$ and $\deg_W(r_{j_0,t(i)}) \leq
\deg_W(r_{j_1,t(i)})$. We apply the algorithm ColumnReduction (see
Algorithm \ref{sec3.2:algo1}) on $R,M,i_0+1,t(i_0+1)$.

\begin{algorithm}
\SetKwInOut{Input}{input}\SetKwInOut{Output}{output}

\Input{
  \begin{itemize}
\item $M \in M_{d \times k}(\mSnu)$
    \item 
  $R \in M_{k \times \ell}(\mSnu)$ in the form (\ref{sec3.2:formR}),
\item $i,t(i) \in \N$
\end{itemize}
}
\Output{$R$,$M$ such that $M\cdot R=0$ and $R$ does not satisfy condition $\Cond(t(i))$}
\BlankLine
  \While{$\Cond(t(i))$ is satisfied}{
Pick up $j_0,j_1 \in \{ 1, \ldots, k\}$ such that
$r_{j_0,t(i)}\cdot r_{j_1,t(i)}\neq 0$, $v_\nu(r_{j_0,t(i)})
\leq v_\nu(r_{j_1,t(i_0+1)})$ and $\deg_W(r_{j_0,t(i)}) \leq\deg_W(r_{j_1,t(i)})$\;
$(q,r) \leftarrow \mathrm{EuclideanDivision}(r_{j_0,t(i)},r_{j_1,t(i)})$\;
$C_{j_0}(M) \leftarrow C_{j_0}(M)+qC_{j_1}(M)$\;
$L_{j_1}(R) \leftarrow L_{j_1}(R) - q L_{j_0}(R)$\;\label{step4algo1}
}
\Return{$M,R$}\;
\caption{ColumnReduction (preliminary version)}\label{sec3.2:algo1}
\end{algorithm}

It is clear that the matrix $M$ returned by Algorithm \ref{sec3.2:algo1}
represents the same module $\M$ since it modifies $M$ by performing
elementary operations on the columns. 
Moreover, the algorithm preserves the relation $M\cdot R=0$.
The effect of the operation of
Step \ref{step4algo1} of Algorithm \ref{sec3.2:algo1} on the entry
$r_{j_1,t(i)}$ of $R$ is either 
\begin{itemize}
  \item
replace it by $0$,
\item or it decreases strictly its Weierstrass
degree and it increases its Gauss valuation.
\end{itemize}

Hence, it is easily seen that after a finite number of
loops the conditions $\Cond(t(i_0+1))$ will no longer be satisfied on $R$.  
It may happen that there is only one nonzero entry on the
$t(i_0+1)^{th}$ column of $R$ and in this case, we are basically done:
by permuting the rows of $R$ we can suppose that the non zero entry is
$r_{i_0+1,t(i_0+1)}$. Next, we remark that the vector $v$ of $\M$ whose
coordinates in the canonical basis of $\mSnu^d$ is given by the
$(i_0+1)^{th}$ column of $M$ verifies $r_{i_0+1,t(i_0+1)}\cdot v=0$ which
means that $v=0$ and we can set $r_{i_0+1,j}=0$ for $j> t(i_0+1)$.

If there are several nonzero entries on the $t(i_0+1)^{th}$ column of
$R$ and the condition $\Cond(t(i_0+1))$ is not satisfied on $R$, we
let $j_0$ be such that $v_\nu(r_{j_0,t(i_0+1)}) = \min_{1 \leq j \leq
k}\{ v_\nu(r_{j,t(i_0+1)})\}$. Note that we have
$v_\nu(r_{j_0,t(i_0+1)}) < v_\nu(r_{j,t(i_0+1)})$ for $j \neq j_0$
because on the contrary, the condition $\Cond(t(i_0+1))$ would be
satisfied on $R$.  By multiplying the $t(i_0+1)^{th}$ column of $R$ by an
element of $\mSpnu$ with valuation $-v_\nu(r_{j_0,t(i_0+1)})$, we can
moreover suppose that $v_\nu(r_{j_0,t(i_0+1)})=0$. Let $\delta =
\min_{j\neq j_0}(v_\nu(r_{j,t(i_0+1)}))$.

\paragraph{The case $\nu=0$} First, we suppose that $\nu=0$ from which we
deduce that $\delta$ is a positive integer. Denote
by $e_1, \ldots, e_k$ the generators of $\M$ represented by the column
vectors of the matrix $M$. Denote by $\M_1$ the module generated by
$(e'_j)_{j=1 \ldots k}$ with $e'_j=e_j$ for $j\neq j_0$ and
$e'_{j_0}=\frac{1}{\pi} e_{j_0}$.  The identity of $\mSnu^d$ induces
an inclusion $f:\M \rightarrow \M_1$. It is clear that the cokernel of
$f$ is annihilated by $\pi$.  Moreover, we have 
\begin{equation}\label{sec3.2:eq2} 
r_{j_0,t(i_0+1)}.e'_{j_0}= \sum_{j \neq j_0} \frac{r_{j,t(i_0+1)}}{\pi} e_j.  
\end{equation} 
As the right hand side of (\ref{sec3.2:eq2}) is in $\M$ since
$\frac{r_{j,t(i_0+1)}}{\pi} \in \mSnu$, the cokernel of $f$ is also
annihilated by $r_{j_0,t(i_0+1)}$ which is a distinguished element of
$\mSnu$. We conclude that $f$ is a quasi-isomorphism.

We denote by $O_1(j)$ the operation on the couple of matrices $(M,R)$ 
which consists in multiplying by $\frac{1}{\pi}$ the $(j)^{th}$ column 
of $M$ and multiplying by $\pi$ the $(j)^{th}$ row of $R$. Keeping the 
hypothesis and notations of the preceding paragraph, it is clear that if 
$(M,R)$ represents the module $\M$ and its relations, then the matrices 
resulting of the operation of $O_1(j_0)$ represents the module $\M_1$ 
which is quasi-isomorphic to $\M$. By repeating operations of the form 
$O_1(j)$ a finite number of time, we can suppose that $\delta =0$. But 
it means that the condition $\Cond(t(i_0+1))$ is not satisfied on $R$ 
and we can call again Algorithm \ref{sec3.2:algo1}.

We thus obtain the algorithm ColumnReduction (final version) which
takes a relation matrix of the form (\ref{sec3.2:formR}) for $i_0$ and
returns a relation matrix of the same form for $i_0+1$.  The algorithm
MatrixReduction (final version), Algorithm \ref{algo:matrixreduction}, uses
ColumnReduction in order to compute a new set of generators of a
module quasi-isomorphic to $\M$ the relation matrix of which has a
triangular form.

\begin{algorithm}
\SetKwInOut{Input}{input}\SetKwInOut{Output}{output}
\Input{
  \begin{itemize}
    \item 
  $R \in M_{k \times \ell}(\mSnu)$, 
\item $M \in
M_{d \times k}(\mSnu)$ such that $M\cdot R=0$.
\end{itemize}
}
\Output{
$R \in M_{k \times \ell}(\eSnu)$, $M \in
M_{d \times k}(\eSnu)$  such that $M\cdot R=0$ and $R$ is a triangular
matrix.}
\BlankLine
$i_0 \leftarrow 0$\;
$t(i_0) \leftarrow 1$\;
\While{$i \leq k$}{
$t(i_0) \leftarrow \min\{t | t>t(i_0)\, \text{and}\, \exists
j > 0, \text{with}\, r_{j,t} \neq 0$ \}\;
$i_0 \leftarrow i_0 +1$\;
$M,R \leftarrow \mathrm{ColumnReduction}(M,R,i_0,t(i_0))$\;
\For{$j \leftarrow t(i_0)+1$ {\bf to} $\ell$}{
$r_{i_0,j} \leftarrow 0$}
}
\caption{MatrixReduction for the case $\nu=0$}\label{sec3.2:algo2}
\end{algorithm}

\begin{algorithm}
\SetKwInOut{Input}{input}\SetKwInOut{Output}{output}

\Input{
  \begin{itemize}
\item $M \in
M_{d \times k}(\mSnu)$,
      \item 
  $R \in M_{k \times \ell}(\mSnu)$ in the form (\ref{sec3.2:formR}), 
\item $i,t(i) \in \N$ the position of the last non zero "diagonal"
  entry of $R$.
\end{itemize}}
\Output{$R$,$M$ such that $M.R=0$ and $R$ is triangular up to the
$i+1$ row.}
\BlankLine
\While{$\exists j_0,j_1$ such that $j_0 \neq j_1$ and $r_{j_0,t(i)}\cdot r_{j_1,t(i)} \neq 0$}{
  \While{$\Cond(t(i))$ is satisfied}{
Pick up $j_0,j_1 \in \{ 1, \ldots, k\}$ such that
$r_{j_0,t(i)}\cdot r_{j_1,t(i)}\neq 0$,
$v_\nu(r_{j_0,t(i)})
\leq v_\nu(r_{j_1,t(i)})$ and $\deg_W(r_{j_0,t(i)}) \leq\deg_W(r_{j_1,t(i)})$\;
$(q,r) \leftarrow \mathrm{Euclidean Division}(r_{j_0,t(i)},r_{j_1,t(i)})$\;
$C_{j_0}(M) \leftarrow C_{j_0}(M)+qC_{j_1}(M)$\;
$L_{j_1}(R) \leftarrow L_{j_1}(R) - q L_{j_0}(R)$\;
}
Let $j_0$ be such that $\deg_W(r_{j_0,t(i)}) = \max_{1 \leq j \leq k}\{ \deg_W(r_{j,t(i)})\}$\;
$\delta \la \min_{j\neq j_0}(v_\nu(r_{j,t(i)}))-v_\nu(r_{j_0,t(i)})$\;
$C_{j_0}(M) \leftarrow \frac{1}{\pi^{\delta}}
C_{j_0}(M)$\;
$L_{j_0}(R) \leftarrow \pi^{\delta}
L_{j_0}(R)$\;
}
\Return{$M,R$}\;
\caption{ColumnReduction (final version) for
$\nu=0$}\label{sec3.2:algo3}
\end{algorithm}

\paragraph{The general case}
We reduce the general case to the case $\nu=0$, by using Lemma
\ref{sec1:lempiso}.
Let $\Rpi$ in an algebraic closure of $K$ be such
that $\Rpi^\alpha=\pi$.  Let $\Ri'=\Ri[\Rpi]$, ${\eSnu}=\mSnu
\otimes_{\Ri} \Ri'$ and $\M'= \M \otimes_{\mSnu} {\eSnu}$. The
valuation on $\Ri$ (resp. the Gauss valuation on ${\mSnu}$) extends 
uniquely to $\Ri'$ (resp. to $\eSnu$). We have
$v_\nu(\Rpi)=1/\alpha$. The algorithm for the general case is exactly
the same as for the case $\nu=0$ up to the point when
$\Cond(t(i_0+1))$ is not satisfied. 
By multiplying the $t(i_0+1)^{th}$ column of $R$ by
$\Rpi^{-v_\nu(r_{j_0,t(i_0+1)})\cdot \alpha}$, we can
moreover suppose that $v_\nu(r_{j_0,t(i_0+1)})=0$. Let $\delta =
\min_{j\neq j_0}(v_\nu(r_{j,t(i_0+1)}))$.

With this setting, we can define a quasi-isomorphism in the same
manner as before.  Namely, let $e_1, \ldots, e_k$ be the generators of
$\M'$ as a sub-module of ${\eSnu}^d$ represented by the column vectors
of the matrix $M$.  Denote by $\M'_1$ the module generated by
$(e'_j)_{j=1\ldots k}$ where $e'_j=e_j$ for $j\neq j_0$ and $e'_{j_0}=
\frac{1}{\Rpi^\delta} e_{j_0}$. Then the natural injection $\M'
\rightarrow \M'_1$ is a quasi-isomorphism.  We denote by
$O_2(j,\delta)$ the operation on the couple of matrices $(M,R)$ with
coefficients in $\eSnu$ which consists in multiplying by
$\frac{1}{\Rpi^\delta}$ the $(j)^{th}$ column of $M$ and multiplying
by $\Rpi^\delta$ the $(j)^{th}$ row of $R$. With the hypothesis and
notations of this paragraph (\emph{i.e.} $M$ has the form
(\ref{sec3.2:formR})), if $(M,R)$ represents the module $\M'$ and its
relations, then the matrices $(M',R')$ resulting of the operation of
$O_2(j_0,\delta)$ represents the module $\M'_1$ which have been shown
to be quasi-isomorphic to $\M'$ (as a $\eSnu$-module). Moreover, $R'$
verifies the condition $\Cond(t(i_0+1))$.

The matrix $M'$ (resp. $R'$), resulting of the operation
$O_2(j,\delta)$ is made of column (resp.  row) vectors with
coefficients in $\mSnu$ multiplied by $\Rpi^\delta$ for a certain
$\delta \in \frac{1}{\alpha} \Z$. An important claim is that this
structure will be kept intact in the course of the computations
involving all the elementary operations introduced up to now.
In  fact, these operations on the
rows of $R$ are:
\begin{itemize}
  \item multiplication of a row by a $\Rpi^\alpha$, for $\alpha$ an
    integer ;
  \item permutation of the rows ;
  \item for $j_0, j_1 \in \{1, \ldots, k \}$, replacing $L_{j_1}(R)$
    by $L_{j_1}(R) - q' L_{j_0}(R)$ where $q'$ is the quotient of
    $\Rpi^\alpha_1\cdot y$ by $\Rpi^{\alpha_0}\cdot x$ for $x,y \in
    \mSnu$ and $\alpha_0, \alpha_1 \in \N$.
\end{itemize}
It is clear that the two first operations does not change the
structure of $R$ and the same thing is true for the last operation.
Indeed,
let $q \in \mSpnu$ and $r \in \mSpnu \cap K[u]$ with $\deg(r)
\leq \deg_W(x)$, be such that $y=q\cdot x + r$, then for $\alpha_0,
\alpha_1 \in \N$, we have $\Rpi^{\alpha_0}\cdot y =
\Rpi^{\alpha_0-\alpha_1}q\cdot \Rpi^{\alpha_1}x+\Rpi^{\alpha_0}r$ so
that we have $q'= \Rpi^{\alpha_0-\alpha_1}q$ with $q \in \mSnu$.

In order to prove formality this claim and take advantage of it to carry
out all the computations in the smaller $\mSnu$ coefficient ring, we
represent the couple of matrices $(M',R')$ with coefficients in
$\eSnu$ by a triple $(M,R,L)$ where $M,R$ are matrices with
coefficients in $\mSnu$ and $L=[\alpha_1, \ldots, \alpha_k]$ is a list
of integers such that for $i=1,\ldots,k$, $C_i(M') = \Rpi^\alpha_i
C_i(M)$ and $L_i(R') = \Rpi^{-\alpha_i} L_i(R)$.  We say that the
condition $\Cond'(i)$ on $R$ is satisfied if there exists two
different $j_0, j_1 \in \{ 1, \ldots, k\}$ such that
$r_{j_0,t(i)}\cdot r_{j_1,t(i)}\neq 0$,
$v_\nu(r_{j_0,t(i)})+\frac{\alpha_{j_0}}{\alpha} \leq
v_\nu(r_{j_1,t(i)}) +\frac{\alpha_{j_1}}{\alpha} $ and
$\deg_W(r_{j_0,t(i)}) \leq \deg_W(r_{j_1,t(i)})$. With these
notations, we can write the final version of the MatrixReduction
algorithm (see Algorithm \ref{algo:matrixreduction}) which encode the matrices $M',R'$ with coefficients in
$\eSnu$ with a couple $M,R$ of matrices with coefficients in $\mSnu$
and a list of integers.

\begin{algorithm}
\SetKwInOut{Input}{input}\SetKwInOut{Output}{output}
\Input{
  \begin{itemize}
    \item 
  $R \in M_{k \times \ell}(\mSnu)$, 
\item $M \in
M_{d \times k}(\mSnu)$ such that $M\cdot R=0$.
\end{itemize}
}
\Output{
$R \in M_{k \times \ell}(\eSnu)$, $M \in
M_{d \times k}(\eSnu)$, $L$  such that $M\cdot R=0$ and $R$ is a triangular
matrix.}
\BlankLine
$i_0 \leftarrow 0$\;
$t(i_0) \la 1$\;
$L \leftarrow [0, \ldots, 0]$\;
\While{$i \leq k$}{
$i_0 \leftarrow i_0 +1$\;
$t(i_0) \leftarrow \min\{t | t>t(i_0)\, \text{and}\, \exists
j > 0, \text{with}\, r_{j,t} \neq 0$ \}\;
\While{$\exists j_0,j_1$ such that $j_0 \neq j_1$ and $r_{j_0,t(i_0)}\cdot r_{j_1,t(i_0)} \neq 0$}{
  \While{$\Cond'(t(i_0))$ is satisfied}{
Pick up $j_0,j_1 \in \{ 1, \ldots, k\}$ such that
$r_{j_0,t(i_0)}\cdot r_{j_1,t(i_0)}\neq 0$,
$v_\nu(r_{j_0,t(i_0)})+\frac{L[j_0]}{\alpha}
\leq v_\nu(r_{j_1,t(i_0)})+\frac{L[j_1]}{\alpha}$ and $\deg_W(r_{j_0,t(i_0)}) \leq\deg_W(r_{j_1,t(i_0)})$\;
\If{$v_\nu(r_{j_0,t(i_0)}) > v_\nu(r_{j_1,t(i_0)})$}{
  $\delta_0 \leftarrow \lceil v_\nu(r_{j_0,t(i_0)}) -
  v_\nu(r_{j_1,t(i_0)}) \rceil$\;
  $L_{j_1}(R) \leftarrow \pi^{\delta_0} L_{j_1}(R)$\;
  $C_{j_1}(M) \leftarrow \pi^{-\delta_0} C_{j_1}(M)$\;
  $L[j_1] \leftarrow L[j_1] + \alpha\cdot \delta_0$\;
}
$(q,r) \leftarrow \mathrm{Euclidean Division}(r_{j_0,t(i_0)},r_{j_1,t(i_0)})$\;
$C_{j_0}(M) \leftarrow C_{j_0}(M)+qC_{j_1}(M)$\;
$L_{j_1}(R) \leftarrow L_{j_1}(R) - q L_{j_0}(R)$\;
}
Let $j_0$ be such that $\deg_W(r_{j_0,t(i_0)}) = \max_{1 \leq j \leq k}\{ \deg_W(r_{j,t(i_0)})\}$\;
$\delta \la \min_{j\neq j_0}(v_\nu(r_{j,t(i_0)}))-v_\nu(r_{j_0,t(i_0)})$\;
$C_{j_0}(M) \leftarrow \frac{1}{\pi^{\lfloor \delta \rfloor}}
C_{j_0}(M)$\;
$L_{j_0}(R) \leftarrow \pi^{\lfloor \delta \rfloor}
L_{j_0}(R)$\;
$L[j_0] \leftarrow L[j_0] + \delta -\lfloor \delta
\rfloor$\;
}
\For{$j \leftarrow t(i_0)+1$ {\bf to} $\ell$}{
$r_{i_0,j} \leftarrow 0$}
Let $j_0 \in \{1, \ldots, k\}$ be such that $r_{j_0,t(i_0)}\neq 0$\;
$(C_{j_0}(M), C_{i_0}(M)) \la (C_{i_0}(M), C_{j_0}(M))$\;
$(L_{j_0}(R), L_{i_0}(R)) \la (L_{i_0}(R), L_{j_0}(R))$\;
}
\caption{MatrixReduction}\label{algo:matrixreduction}
\end{algorithm}

\begin{example}
  We illustrate the operation of the algorithm on the
  module of example \ref{mysimpleexample}.  Recall that $\M$ is the
  submodule of $\mS_0$ generated by $(\pi^2,\pi  u^3)$.
It is represented in the canonical basis of $\mS_0$ by the matrices
$M$ of generators and $R$ of relation :
$$M=\left( \begin{matrix} \pi^2 & \pi u^3 \end{matrix}\right), R=\left( \begin{matrix} 
u^3 \\ -\pi \end{matrix} \right).$$
It is clear that $Cond(1)$ is not verified on $R$ since there is no
division possible between its entries. As a consequence, we apply
operation $O_1(1)$ on the couple $(M,R)$ to obtain:
$$M=\left( \begin{matrix} \pi & \pi u^3 \end{matrix}\right), R=\left( \begin{matrix} \pi
u^3 \\ -\pi \end{matrix} \right).$$
Now, we have $\pi u^3=-u^3\cdot \pi$ and by applying on $M$  (resp.
$R$) an
elementary operation on the columns (resp. rows), we get finally :
$$M=\left( \begin{matrix} \pi & 0 \end{matrix}\right), R=\left(
\begin{matrix} 0
\\ -\pi \end{matrix} \right).$$
An we deduce that the maximal module associate to $\M$ is $\pi.\mS_0$.
\end{example}

\subsubsection{Computation of $\Max(\M)$}\label{subsec:compmax}
Let $M_1,R_1,L_1=\mathrm{MatrixReduction}(M,R,L=[0,\ldots, 0])$. Let
$L_1=[\beta_1, \ldots, \beta_k]$.
We denote by $\M'_1$ the sub-$\eSnu$-module of $(\eSnu)^d$ generated
by the vectors given in the canonical basis of $(\eSnu)^d$ by the
column vectors $\Rpi^{\beta_i}\cdot C_i(M_1)$ for $i \in \{1,\ldots , k \}$
such that $L_i(R_1)$ is the zero vector.
\begin{lemma}\label{sec3.2:lemmamax}
  We have $\M'_1=\Max(\M \otimes_{\mSnu} \eSnu)$.
\end{lemma}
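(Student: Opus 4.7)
The plan is to show that $\M'_1$ is free as an $\eSnu$-module and that it is quasi-isomorphic to $\M \otimes_{\mSnu} \eSnu$; by Lemma~\ref{sec3.2:lemma1}, such a module is necessarily the maximal one.

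The first task is to identify the invariants preserved by the MatrixReduction algorithm. Denote by $(M', R')$ the $\eSnu$-matrices encoded by the triple $(M, R, L)$, defined by $C_i(M') = \Rpi^{L[i]} C_i(M)$ and $L_i(R') = \Rpi^{-L[i]} L_i(R)$; a direct computation then gives $M' R' = M R$, so the equality $MR = 0$ is the same invariant on both sides. The key invariants to maintain along the execution are: (i) $MR = 0$; (ii) the $\eSnu$-module $\M'$ generated in $\eSnu^d$ by the columns of $M'$ is quasi-isomorphic to $\M \otimes_{\mSnu} \eSnu$ and torsion free; (iii) the columns of $R'$ generate a submodule $\Rc'$ of the full relation module of $\M'$ satisfying $\pi^N \Rc \subset \Rc'$ for some fixed integer $N$. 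The unimodular Euclidean-division steps preserve $\M'$ itself, being a mere change of generators, while the $O_1/O_2$-style rescalings replace $\M'$ by a super-module whose quotient is annihilated by a power of $\Rpi$, producing a quasi-isomorphism as already argued in the body of the text.

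Next, I would describe the shape of $R_1$ at termination. The outer loop on column $t(i_0)$ runs until that column has at most one nonzero entry, and the final row swap places it at position $(i_0, t(i_0))$. The subsequent zeroing of the remainder of row $i_0$ is justified as follows: the column encodes the relation $r_{i_0, t(i_0)} \cdot e_{i_0} = 0$ in $\M'$ with $r_{i_0, t(i_0)} \neq 0$, and torsion-freeness of $\M'$ forces $e_{i_0} = 0$, so any entry of $R_1$ in row $i_0$ may be cleared without modifying the module. Hence, at termination, every generator $e_i = \Rpi^{\beta_i} C_i(M_1)$ with $L_i(R_1) \neq 0$ is zero in $\M'$, and every entry of $R_1$ lying in a row $i$ with $L_i(R_1) = 0$ vanishes.

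Finally, I would deduce freeness of $\M'_1$. For any relation $(\lambda_1, \ldots, \lambda_k) \in \Rc$, invariant (iii) gives that $\pi^N (\lambda_1, \ldots, \lambda_k)$ is an $\mSnu$-linear combination of the columns of $R_1$; by the previous step, such a combination is zero on every coordinate $i$ with $L_i(R_1) = 0$. Thus $\pi^N \lambda_i = 0$, and since $\mSnu$ is a domain $\lambda_i = 0$ for each such $i$. The subfamily $\{ e_i : L_i(R_1) = 0 \}$ is therefore $\eSnu$-linearly independent; it also spans $\M'$ since the remaining $e_i$ are zero. Hence $\M'_1 = \M'$ is a free $\eSnu$-module quasi-isomorphic to $\M \otimes_{\mSnu} \eSnu$, and Lemma~\ref{sec3.2:lemma1} concludes. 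The main difficulty I expect lies in the first step: verifying that all the updates to $L$, notably the preliminary multiplication of $L_{j_1}(R)$ by $\pi^{\delta_0}$ triggered when $v_\nu(r_{j_0,t(i_0)}) > v_\nu(r_{j_1,t(i_0)})$, correspond to legal operations on the $\eSnu$-pair $(M', R')$, and that invariant (iii) survives the $O_1/O_2$ rescalings, which actually modify both $\M'$ and its relation module.
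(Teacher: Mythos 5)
Your proposal is correct and follows essentially the same route as the paper: both establish that the elementary operations of MatrixReduction preserve a chain of quasi-isomorphisms together with the property that the tracked relation matrix generates the full relation module up to a power of $\pi$ (equivalently, after inverting $\Rpi$), deduce that the retained generators have trivial relation module and hence that $\M'_1$ is free, and conclude via Lemma \ref{sec3.2:lemma1}. Your explicit linear-independence argument for the columns indexed by zero rows of $R_1$ is just an unpacking of the paper's statement that $\Rc_1[1/\Rpi]=0$ forces $\Rc_1=0$.
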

\begin{proof} 
 Let $\M'=\M
 \otimes_{\mSnu} \eSnu$ and
  let $\M_1$ be the sub-$\eSnu$-module of $(\eSnu)^d$
  generated by the column vectors of $M_1$. It is clear
  that $\M_1=\M'_1$ since for $i \in \{1,\ldots , k \}$ such that
  $L_i(R_1)$ is not the zero vector, we have $C_i(M_1)=0$ (because
  $\M_1$ is torsion free).  As $\M_1$ is obtained from $\M'$ by a
  sequence of quasi-isomorphisms, it means that there exists a
  quasi-isomorphism $q':\M' \rightarrow \M'_1$. If
  we prove that $\M'_1$ is a free $\eSnu$-module, we are done by
  Lemma \ref{sec3.2:lemma1}.

 Consider the exact sequence $0 \rightarrow \Rc \rightarrow \mSnu^k
 \rightarrow \M \rightarrow 0$ associated to the family $(e_1, \ldots,
 e_k)$ of generators of $\M$. As $\eSnu$ is flat over $\mSnu$, and as
 $\Rc' \otimes_{\mSnu} \eSnu[1/\Rpi] = \Rc \otimes_{\mSnu}
 \eSnu[1/\Rpi]$ by definition of $\Rc'$, we have
 an exact sequence 
 \begin{equation}\label{exactseq1}
 0 \rightarrow \Rc' \otimes_{\mSnu} \eSnu[1/\Rpi]
 \rightarrow ({\eSnu}^k)[1/\Rpi] \rightarrow \M'[1/\Rpi] \rightarrow 0
 \end{equation}
 defined by the generators $(e_1, \ldots, e_k)$ of $\M'[1/\Rpi]$. It
 is clear that at each step, the algorithm ReduceMatrix describes an exact sequence 
 of the form (\ref{exactseq1}) for
 a different map $({\eSnu}^k)[1/\Rpi] \rightarrow \M'[1/\Rpi]$ since
 it preserves the relation $MR=0$. From this and the definition of $M'_1$, we deduce that
 if $\Rc_1$ is the module of relations of $\M'_1$ then
 $\Rc_1[1/\Rpi]=0$ from which we deduce that $\Rc_1=0$ and we are
 done.  \end{proof}

%Next, we would like to "descend" $\M'_1$ to a module over $\mSnu$. For
%this, we need the following definition and notation:
%\begin{definition}
%Let $\Nc'$ be a $\eSnu$-module, a $\mSnu$-structure on $\Nc'$ is
%the data of a $\mSnu$-module $\Nc \subset \Nc'$ such that $\Nc
%\otimes_{\mSnu} \eSnu[1/\pi] = \Nc'\otimes_{\eSnu} \eSnu[1/\pi]$. We say that $x$ an element of $\Nc'$ is
%defined over $\mSnu$ if $x \in \Nc\otimes_{\mSnu} \mSnu[1/\pi]$. We denote by $\Nc'(\mSnu)$
%the sub-$\mSnu$-module of elements of $\Nc'$ defined over $\mSnu$.
%\end{definition}
%
%By definition $\M' = \M \otimes_{\mSnu} \eSnu$ has a
%$\mSnu$-structure. Moreover, it is clear that the quasi-isomorphism $q': \M'
%\rightarrow \M'_1$ (see the proof of the preceding Lemma) induces a
%$\mSnu$-structure on $\M'_1$.

 \begin{remark}\label{rqbasis}
As a byproduct of the preceding proof, we see that 
 the vectors given in the canonical basis of $(\eSnu)^d$ by the
column vectors $\Rpi^{\beta_i}\cdot C_i(M_1)$ for $i \in \{1,\ldots , k \}$
such that $L_i(R_1)$ is the zero vector form a basis of $\M'_1$.
 \end{remark}

\begin{corollary}
  Let $\M_2 = \M'_1 \cap \mSnu^d$. Then, $\M_2=\Max(\M)$.
\end{corollary}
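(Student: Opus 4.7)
The plan is to combine Lemma \ref{sec3.2:lemmamax}, which identifies $\M'_1$ with $\Max(\M \otimes_{\mSnu} \eSnu)$, together with Proposition \ref{prop:tech}, whose left inverse formula is precisely intersection with $\mSnu^d$. Concretely, I want to show that $\M_2 = \Phi(\Max(\M)) \cap \mSnu^d$ and that $\Phi(\Max(\M)) = \M'_1$; the corollary then follows immediately from the formula for the left inverse of $\Phi$ given in Proposition \ref{prop:tech}.

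First, since $\M \subset \mSnu^d$, Proposition \ref{prop:qis} ensures that $\Max(\M) \subset \mSnu^d$, so $\Max(\M)$ is a legitimate element of $\Max^d_{\mSnu}$. By the injectivity statement in Proposition \ref{prop:tech}, we have
\begin{equation*}
\Max(\M) \;=\; \Phi(\Max(\M)) \cap \mSnu^d \;=\; \Max\bigl(\Max(\M) \otimes_{\mSnu} \eSnu\bigr) \cap \mSnu^d.
\end{equation*}
Hence everything reduces to identifying $\Max(\Max(\M) \otimes_{\mSnu} \eSnu)$ with $\M'_1$.

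The main step is to check that the canonical quasi-isomorphism $\iota_{\M} : \M \to \Max(\M)$ remains a quasi-isomorphism after tensoring with $\eSnu$. Since $\M$ is torsion free, $\iota_\M$ is injective, and its cokernel is a finite length $\mSnu$-module, hence killed simultaneously by some power of $\pi$ and by some distinguished element $\lambda \in \mSnu$. By Lemma \ref{sec1:lempiso}, $\eSnu$ is flat over $\mSnu$, so tensoring preserves the injection and yields a cokernel which is a finitely generated $\eSnu$-module still killed by those same elements. But in $\eSnu$ one has $\pi = \Rpi^\alpha$, and $\lambda$ remains distinguished (its Gauss valuation is unchanged). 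Thus $\M \otimes_{\mSnu} \eSnu \to \Max(\M) \otimes_{\mSnu} \eSnu$ is a quasi-isomorphism of $\eSnu$-modules, so by the universal property of $\Max$ the two modules share the same maximal module:
\begin{equation*}
\Max\bigl(\Max(\M) \otimes_{\mSnu} \eSnu\bigr) \;=\; \Max\bigl(\M \otimes_{\mSnu} \eSnu\bigr).
\end{equation*}

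Applying Lemma \ref{sec3.2:lemmamax}, the right-hand side is precisely $\M'_1$. Putting everything together,
\begin{equation*}
\Max(\M) \;=\; \M'_1 \cap \mSnu^d \;=\; \M_2,
\end{equation*}
which is the claim. The only mildly delicate point is the preservation of quasi-isomorphism under $(\cdot) \otimes_{\mSnu} \eSnu$; it is harmless here because the extension $\mSnu \to \eSnu$ is finite flat and behaves well with respect to both the uniformizer and distinguished elements.
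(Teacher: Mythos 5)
Your proof is correct and follows essentially the same route as the paper, which simply declares the corollary an immediate consequence of Proposition \ref{prop:tech} and Lemma \ref{sec3.2:lemmamax}; you have merely made explicit the one step the paper leaves implicit, namely that $\M \otimes_{\mSnu} \eSnu \to \Max(\M) \otimes_{\mSnu} \eSnu$ remains a quasi-isomorphism (by flatness of $\eSnu$ and persistence of the annihilators), so that $\Phi(\Max(\M)) = \Max(\M \otimes_{\mSnu} \eSnu) = \M'_1$.
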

\begin{proof}
  The corollary is an immediate consequence of Proposition
  \ref{prop:tech} and Lemma
  \ref{sec3.2:lemmamax}. 
\end{proof}

\subsubsection{Computation with $\mSnu$-modules}

Proposition \ref{prop:tech}
and Lemma \ref{sec3.2:lemmamax} establish a one-to-one correspondence
$\Phi : \Max_{\mSnu}^d \rightarrow \Free^d_{\eSnu}$, defined by $\M
\mapsto \Max(\M \otimes_{\mSnu} \eSnu)$. 
Moreover, the image of $\Phi$ is exactly the set of free sub-$\eSnu$-modules of
${\eSnu}^d$ which admit a basis $(e_i)_{i \in I}$ where $e_i \in
(\eSnu)^d$ and $e_i =\Rpi^{\alpha_i} e'_i$ with $e'_i \in (\mSnu)^d$
and $0 \leq \alpha_i \leq \alpha$.   We have seen that a $\M \in
\Phi(\Max_{\mSnu}^d)$ can be represented by a couple $(M,L)$ where $M
\in M_{d \times k}(\mSnu)$ and $L$ is a list of
positive integers $\leq \alpha$.

From the data of a matrix representing an element of $\M \in
\Max^d_{\mSnu}$ the algorithm MatrixReduction computes the couple
$(M,L)$ representing $\Phi(\M)$. Moreover, if $\M' \in
\Phi(\Max^d_{\mSnu})$, the Algorithm \ref{sec3.3:algo1} allows to
recover $\Phi^{-1}(\M')$.  We see that we can easily go back and forth
between the different representations.  For most of the applications
however, it is convenient to represent an element of $\M \in
\Max^d_{\mSnu}$ by a couple $(M,L)$. Indeed, we have the lemma:

\begin{lemma}\label{sec:3.3.4:lemmcomp}
Let $\M_1,\M_2 \in \Max^d_{\mSnu}$, then
\begin{eqnarray*}
  \Phi(\M_1 \cap \M_2) & = & \Phi(\M_1) \cap \Phi(\M_2), \\
  \Phi(\M_1 +_\free \M_2) & = & \Phi(\M_1) +_\free \Phi(\M_2).
\end{eqnarray*}
\end{lemma}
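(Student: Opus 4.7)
The plan is to reduce both identities to two ingredients already developed in the paper: (i) by Lemma \ref{sec1:lempiso} the ring $\eSnu$ is free of finite rank $\alpha$ over $\mSnu$, hence a flat base change that sends finite-length $\mSnu$-modules to finite-length $\eSnu$-modules, and in particular preserves quasi-isomorphisms; (ii) by Lemma \ref{sec3.1:lemmax1} and Proposition \ref{prop:qis}, $\Max$ is insensitive to replacing a module by a quasi-isomorphic one, and inside $\eSnu^d$ the intersection of two maximal modules is again maximal.

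For the intersection, I would first use flatness of $\eSnu$ over $\mSnu$ to commute base change with intersection inside $\eSnu^d$, obtaining
$$(\M_1 \cap \M_2) \otimes_{\mSnu} \eSnu \;=\; (\M_1 \otimes_{\mSnu} \eSnu) \cap (\M_2 \otimes_{\mSnu} \eSnu).$$
Since the natural maps $\M_i \otimes \eSnu \hookrightarrow \Phi(\M_i)$ are quasi-isomorphisms (this is just Proposition \ref{prop:qis} applied over $\eSnu$), the induced inclusion of the left-hand side into $\Phi(\M_1)\cap\Phi(\M_2)$ has cokernel embedding into the finite-length module $\Phi(\M_1)/(\M_1\otimes\eSnu)\,\oplus\,\Phi(\M_2)/(\M_2\otimes\eSnu)$, hence is itself a quasi-isomorphism. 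As the target is already maximal by Proposition \ref{prop:qis}, applying $\Max$ yields $\Phi(\M_1\cap\M_2)=\Phi(\M_1)\cap\Phi(\M_2)$.

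For the sum, I would unravel $\M_1 +_\free \M_2 = \Max(\M_1+\M_2)$ and use that $\M_1+\M_2 \hookrightarrow \Max(\M_1+\M_2)$ stays a quasi-isomorphism after tensoring with the free $\mSnu$-module $\eSnu$, giving
$$\Phi(\M_1 +_\free \M_2) \;=\; \Max\bigl((\M_1+\M_2)\otimes_{\mSnu}\eSnu\bigr) \;=\; \Max\bigl((\M_1\otimes\eSnu) + (\M_2\otimes\eSnu)\bigr).$$
Summing the two quasi-isomorphisms $\M_i\otimes\eSnu\hookrightarrow\Phi(\M_i)$ produces a further quasi-isomorphism inside $\eSnu^d$, and taking $\Max$ of the target rewrites the last term as $\Max(\Phi(\M_1)+\Phi(\M_2)) = \Phi(\M_1)+_\free\Phi(\M_2)$, as desired.

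The only step requiring a moment of care is the preservation of quasi-isomorphisms under $-\otimes_{\mSnu}\eSnu$; but since $\eSnu\cong\mSnu^{\oplus\alpha}$ as an $\mSnu$-module, this tensor product is simply $(-)^{\oplus\alpha}$ at the level of abelian groups, which manifestly sends finite-length modules to finite-length modules. Everything else is a direct application of the universal property of $\Max$ together with the lattice operations on submodules of $\eSnu^d$ already studied, so no further obstacle is expected.
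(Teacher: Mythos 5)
Your proof is correct, and it is worth noting where it coincides with and where it departs from the paper's argument. For the sum, your route is essentially the paper's: both arguments exhibit $(\M_1+\M_2)\otimes_{\mSnu}\eSnu$ as the source of quasi-isomorphisms into $\Max(\M_1+\M_2)\otimes_{\mSnu}\eSnu$ and into $\Phi(\M_1)+\Phi(\M_2)$, and then apply $\Max$ to both targets. For the intersection, however, the paper argues in one line through the explicit left inverse of $\Phi$ from Proposition \ref{prop:tech}: it computes $\Phi(\M_1)\cap\Phi(\M_2)\cap\mSnu^d=(\Phi(\M_1)\cap\mSnu^d)\cap(\Phi(\M_2)\cap\mSnu^d)=\M_1\cap\M_2$ and concludes by bijectivity of $\Phi$ onto its image. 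You instead work on the source side, using flatness (indeed freeness of rank $\alpha$) of $\eSnu$ over $\mSnu$ to commute base change with intersection and then checking that the inclusion into $\Phi(\M_1)\cap\Phi(\M_2)$ is a quasi-isomorphism with maximal target. Your version is longer but makes explicit a point the paper leaves implicit, namely that $\Phi(\M_1)\cap\Phi(\M_2)$ is again in the image of $\Phi$ (you get this from maximality of the intersection of maximal modules), whereas the paper's version is shorter because it never needs to manipulate quasi-isomorphisms at all for this half. Your verification that $-\otimes_{\mSnu}\eSnu$ preserves quasi-isomorphisms is sound; only note that the freeness of $\eSnu$ over $\mSnu$ comes from $\Ri'=\Ri[\Rpi]$ being free of rank $\alpha$ over $\Ri$ rather than from Lemma \ref{sec1:lempiso} itself, which is a harmless misattribution.
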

\begin{proof}
  For the first claim, we have $\Phi^{-1}(\Phi(\M_1) \cap
  \Phi(\M_2))=\Phi(\M_1) \cap \Phi(\M_2) \cap \mSnu^d= (\Phi(\M_1)
  \cap \mSnu^d) \cap (\Phi(\M_2) \cap \mSnu^d)=\M_1 \cap \M_2$.
 
  Next, we prove the second claim. We have the following diagram of
  quasi-isomorphisms:
\begin{equation}\label{sec:3.3.4:diag1}
\raisebox{-1cm}{
\begin{tikzpicture}[normal line/.style={->}, font=\scriptsize]
\matrix(m) [ampersand replacement=\&, matrix of math nodes, row sep =
  1.5em, column sep =
0.5em]
{ \& (\M_1 + \M_2) \otimes_{\mSnu} \eSnu \&  \\
  \Max(\M_1 + \M_2) \otimes_{\mSnu} \eSnu   \&  \& \Max(\M_1
\otimes_{\mSnu} \eSnu) + \Max(\M_2 \otimes_{\mSnu} \eSnu) \\};
\path[normal line]
(m-1-2) edge node[auto] {} (m-2-3)
(m-1-2) edge node[auto] {} (m-2-1);
\end{tikzpicture}
}
\end{equation}
Thus, we have $\Max(\Max(\M_1 + \M_2) \otimes_{\mSnu} \eSnu)=\Max
((\M_1 + \M_2) \otimes_{\mSnu} \eSnu) 
=\Max(\Max(\M_1
\otimes_{\mSnu} \eSnu) + \Max(\M_2 \otimes_{\mSnu} \eSnu))$ which is
exactly the desired result.
\end{proof}

Let $\M_1, \M_2 \in \Phi(\Max^d_{\mSnu})$ be represented respectively
by the couples $(M_1,L_1)$ and $(M_2,L_2)$. Then, by Lemma
\ref{sec:3.3.4:lemmcomp} one can represent the sum $\M_1+_\free \M_2$
by applying the algorithm MatrixReduction on the couple $((M_1 M_2),
L_1+L_2)$ (where $L_1+L_2$ is the concatenation of the lists $L_1$ and
$L_2$). The representation as a couple $(M,L)$ is however not well
suited to the computation of the intersection of modules, since it
implies the computation of the kernel of a matrix with
coefficient in $\mSnu$ which is not Euclidean.

\subsubsection{The generators of a maximal module}

In order to have a complete algorithm (with oracles) to compute 
$\Max(\M)$, it remains to explain how to recover $\M_2 = \M'_1 \cap 
\mSnu^d$ from the knowledge of $\M'_1$ (see \S \ref{subsec:compmax}
for the definition of $\M'_1$). We would like also to obtain a 
bound on the number of generators of $\M_2$. By the construction of $\M'_1$, 
there exists a basis $(e_1, \ldots, e_k) \in \mSnu^d$ and $\delta_i \in 
\N$ for $i=1, \ldots, k$, such that
$\M'_1 = \bigoplus_{i=1}^k \eSnu.\Rpi^{\delta_i} e_i$. Then, we have
$\M_2=\bigoplus_{i=1}^k (\eSnu.\Rpi^{\delta_i} \cap \mSnu). e_i$.
Hence, it is enough to explain
how to compute $\M'_1 \cap \mSnu^d$ when $\M'_1$ has dimension $1$. In this
case, $\M'_1$ is generated by an element of the form
$\frac{1}{\Rpi^\delta}\cdot y$ where $y \in \mSnu$ and by definition,
we want to find generators for the $\mSnu$-module $\{ x \in \mSnu |
v_\nu(x) \geq v_\nu(\frac{1}{\Rpi^\delta}\cdot y) \}$. We are reduced
to the problem of finding generators of the $\mSnu$-module $\Nc=\{ x
\in \mSnu | v_\nu(x) \geq -\delta / \alpha \}$.

\begin{lemma}\label{sec3.2:lemma2}
  Let $\delta \in \{0, \ldots ,\alpha-1 \}$.
  We define inductively a sequence of couple of integers $(\alpha_i, \beta_i)$
  by setting $\alpha_0=0$, $\beta_0=0$. Then for $i>0$, while
  $\beta_{i-1} + \alpha_{i-1} \nu > - \frac{\delta}{\alpha}$, we
  let $(\alpha_i,\beta_i)$ be the unique couple of integers such that
\begin{itemize}
  \item
    $\beta_i + \alpha_i
  \nu \geq -\frac{\delta}{\alpha}$, 
\item for all $(x,y)\neq (\alpha_i,\beta_i) \in
  \Z^2$ such that $0 \leq x \leq \alpha_i$ and $y+x \nu \geq
  -\frac{\delta}{\alpha}$, we
  have $\beta_i + \alpha_i \nu < y + x
  \nu$,
\item $\alpha_i$ is the smallest integer strictly greater than $\alpha_{i-1}$ such that
  there exists an integer $\beta_i$ with $(\alpha_i, \beta_i)$ satisfying the two
  conditions above. 
  \end{itemize}

The family $(\pi^{\beta_i}\cdot u^{\alpha_i})$ has cardinality bounded by
$\alpha$ and is a system of generators of the $\mSnu$-module $\Nc=\{ x
\in \mSnu | v_\nu(x) \geq -\delta / \alpha \}$.

\end{lemma}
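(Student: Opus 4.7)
The plan is to introduce the auxiliary function $f : \N \to \frac{1}{\alpha}\Z$ defined by
$f(x) = \min\{\, y + x\nu : y \in \Z,\ y + x\nu \geq -\delta/\alpha\,\} = \lceil -\delta/\alpha - x\nu\rceil + x\nu,$
and to recognize that the inductive construction picks out exactly the abscissae $\alpha_i$ at which $f$ attains a new strict minimum, with $\beta_i = \lceil -\delta/\alpha - \alpha_i\nu\rceil$ so that $f(\alpha_i) = \beta_i + \alpha_i\nu$. The proof then splits into a cardinality bound, a key geometric claim about $f$, and an assembly step.

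For the cardinality bound, the values $f(\alpha_0) > f(\alpha_1) > \cdots > f(\alpha_n)$ are strictly decreasing, because $(\alpha_{i-1},\beta_{i-1})$ always lies in the set on which $(\alpha_i,\beta_i)$ is the unique strict minimum. They lie in $\frac{1}{\alpha}\Z$ since $\nu = \beta/\alpha$, and in $[-\delta/\alpha, 0]$ since $f(\alpha_0) = 0$ and the first defining condition supplies the lower bound. This interval contains $\delta + 1 \leq \alpha$ points of $\frac{1}{\alpha}\Z$, giving the bound.

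For the generating claim, the heart of the argument is the following geometric fact: for every integer $j \geq 0$, letting $\iota(j)$ denote the largest index $i$ with $\alpha_i \leq j$, every integer $y$ with $y + j\nu \geq -\delta/\alpha$ automatically satisfies $y + j\nu \geq \beta_{\iota(j)} + \alpha_{\iota(j)}\nu$. When $\iota(j) = n$, termination of the algorithm forces $\beta_n + \alpha_n\nu = -\delta/\alpha$, and the claim reduces to $y + j\nu \geq -\delta/\alpha$. Otherwise $\alpha_i \leq j < \alpha_{i+1}$ with $i = \iota(j)$; suppose by contradiction that $f(j) < f(\alpha_i)$ and let $j^*$ be the smallest integer with $f(j^*) < f(\alpha_i)$. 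Then $j^* > \alpha_i$, because the uniqueness condition defining $(\alpha_i,\beta_i)$ forces $f(x) \geq f(\alpha_i)$ for every $x \leq \alpha_i$. Since $f(x) \geq f(\alpha_i) > f(j^*)$ for all $x < j^*$, the pair $(j^*, \lceil -\delta/\alpha - j^*\nu\rceil)$ is the unique strict minimum of $y' + x'\nu$ over $\{(x',y') \in \Z^2 : 0 \leq x' \leq j^*,\ y' + x'\nu \geq -\delta/\alpha\}$, making $j^*$ a valid candidate for $\alpha_{i+1}$; but $\alpha_i < j^* \leq j < \alpha_{i+1}$ contradicts the minimality of $\alpha_{i+1}$.

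With the key claim in hand, the generation step is routine. Given any $x = \sum_{j \geq 0} a_j u^j \in \Nc$, partition the indices by $\iota$ and set $x_i = \sum_{\iota(j) = i} a_j u^j$, so that $x = \sum_i x_i$. For each $j$ occurring in $x_i$ we have $j \geq \alpha_i$ and $v_K(a_j) + j\nu \geq -\delta/\alpha$, so the key claim applied with $y = v_K(a_j)$ yields $v_K(a_j) + j\nu \geq \beta_i + \alpha_i\nu$, equivalently $v_\nu(\pi^{-\beta_i} a_j u^{j-\alpha_i}) \geq 0$. Summing over $j$ shows that $c_i := x_i / (\pi^{\beta_i} u^{\alpha_i})$ lies in $\mSnu$, and the decomposition $x = \sum_i c_i \cdot \pi^{\beta_i} u^{\alpha_i}$ realises $x$ as the desired $\mSnu$-combination. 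The main obstacle is the geometric key claim: carefully extracting the content of the uniqueness condition built into each $\alpha_{i+1}$ is the delicate point, after which the valuation bookkeeping is mechanical.
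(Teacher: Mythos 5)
Your proof is correct, and it takes a genuinely different route from the paper's. The paper proves the generating statement by induction on the Weierstrass degree: it first invokes Weierstrass preparation to replace $x$ by a polynomial $h$ with $\deg h=\deg_W(h)=d$, then performs a single Euclidean division of $h$ by $\pi^{\beta_{i_0}}u^{\alpha_{i_0}}$ (where $i_0$ is the largest index with $\alpha_{i_0}\le d$) and recurses on the remainder, whose Weierstrass degree has strictly dropped. You bypass preparation and division entirely and split the series coefficient by coefficient into the blocks $x_i=\sum_{\iota(j)=i}a_ju^j$, each of which is manifestly an $\mSnu$-multiple of $\pi^{\beta_i}u^{\alpha_i}$. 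Both arguments hinge on the same geometric fact, namely $y+j\nu\ge\beta_{\iota(j)}+\alpha_{\iota(j)}\nu$ for every admissible lattice point $(j,y)$: the paper dispatches it with the words ``by construction'' applied to the leading term of $h$, whereas you prove it in full by a minimal-counterexample argument that extracts exactly what the minimality clause in the definition of $\alpha_{i+1}$ forbids. Your version is thus more self-contained on the one delicate point; the paper's version is more algorithmic (it is literally the division procedure one would implement). The two cardinality counts (distinct values of $f(\alpha_i)$ in $\frac1\alpha\Z\cap[-\delta/\alpha,0]$ versus $\alpha_i$ bounded by the residue of $-\delta/\beta$ modulo $\alpha$) are equivalent. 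One remark applying equally to both proofs: the statement's $\Nc$ must be read as the set of series in $K[[u]]$ with $v_\nu\ge-\delta/\alpha$ rather than literally as a subset of $\mSnu$, since otherwise the generators with $\beta_i+\alpha_i\nu<0$ would not lie in $\Nc$ and the claim would be vacuous; this is a defect of the statement, not of your argument.
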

\begin{proof}
First, it is clear by definition that all the
$\pi^{\beta_i}\cdot u^{\alpha_i}$ are elements of $\Nc$. Moreover, it is
clear that $\alpha_i$ is bounded by $-\delta/\beta \mod \alpha$.

Denote by $\Nc_0$ the sub-$\mSnu$-module of $\Nc$ generated by the
family $(\pi^{\beta_i}\cdot u^{\alpha_i})$. Let $x \in \Nc$, we prove
inductively on $\deg_W(x)$ that $x$ is in $\Nc_0$. If $\deg_W(x)=0$
then $v_\nu(x) \geq 0$ so that $x=x\cdot 1$ with $x \in \mSnu$.
Suppose that $d=\deg_W(x) > 0$. As $v_\nu(x) \geq -\delta/\alpha$, by
applying Corollary \ref{cor:weierstrass}, we can write $x=q\cdot h$, with $q
\in \mSnu$ invertible and $h \in K[u]$ is a degree $d$ polynomial such
that $v_\nu(h) \geq -\delta/\alpha$ and $\deg_W(h)=d$. We have to show
that $h$ is in $\Nc_0$.  Let $i_0$ be the greatest index such that
$\alpha_{i_0} \leq d$.  Then by construction of the family
$(\alpha_i,\beta_i)$, we have $v_\nu(\pi^{\beta_{i_0}}\cdot
u^{\alpha_{i_0}}) \leq v_\nu(h)$.
Indeed, if $t$ is the term of $h$ of degree $d$ then $t \in \Nc$ and
if we write $t=\pi^{\mu}\cdot u^\chi$, we have by construction
$\beta_{i_0}+\alpha_{i_0}\nu \leq \mu + \chi \nu$.
Thus we can write $h=q_1\cdot
\pi^{\beta_{i_0}}\cdot u^{\alpha_{i_0}}+r$ where $q_1 \in \mSnu$,
$\deg_W(r) < \alpha_{i_0}$ and $v_\nu(r) \geq -\delta/\alpha$.  We can
then apply the induction hypothesis on $r$ to conclude.  \end{proof}

From the above lemma, one can easily deduce an algorithm to
compute the generators of $\Nc=\{ x \in \mSnu | v_\nu(x) \geq -\delta
/ \alpha \}$ as well as an upper bound on the number of generators.
In order to find the $\alpha_i$ we just run over all the values between
$1$ and $-\delta/\beta \mod \alpha$ and check for each of them if it
satisfies the conditions of Lemma \ref{sec3.2:lemma2}.  Nevertheless this algorithm is
inefficient and the obtained bound is far from tight.  In the
following, we explain how to obtain a tight bound as well as an
efficient algorithm to compute a family of generators of $\Nc$ by
using the theory of continued fractions.  In order to set up the
notations, we briefly recall the results from this theory that we need
(see \cite{MR0161833}). For $a_0, \ldots, a_n$
integers, the notation $[a_0; a_1, \ldots, a_n]$ refers to the value
of the continued fraction $$a_0+\cfrac{1}{a_1+\cfrac{1}{\ddots \, +
\cfrac{1}{a_n}}}.$$ 

We take the convention that $a_n\ne 1$ in $[a_0; a_1, \ldots, a_n]$ so
that every rational number can be written uniquely as a finite
continued fraction. Let $r = [a_0; a_1, \ldots, a_n]$. We let
$p_0=a_0$, $q_0=1$, $p_1=a_0 a_1+1$, $q_1=a_1$ and define inductively
$p_k = a_k p_{k-1} + p_{k-2}$, $q_k=a_k q_{k-1}+q_{k-2}$. 
The fractions $p_k/q_k$ are called the $k^{th}$ convergent of the
continued fraction $[a_0; a_1, \ldots , a_n]$. We have the properties:
\begin{itemize}
  \item the integers
$p_k$ and $q_k$ are relatively prime (see \cite[Th. 2]{MR0161833});
\item $p_k/q_k
=[a_0; a_1, \ldots, a_k]$.
\end{itemize}

\begin{definition} 

  Let $r$ be a real number, and let $\gamma$ be a positive integer.
  We say that a fraction $\frac{a}{b}$ ($b\geq\gamma$) is a  \emph{best
  approximation} (resp. a \emph{positive best approximation}) of $r$
  relatively to $\gamma$ if for all integers $c,d$ such that
  $\gamma\leq d\leq b$ and $c/d \ne a/b$ (resp. such that $\gamma\leq
  d\leq b$, 
 $dr-c > 0$ and $c/d \ne a/b$), we have $|dr-c| > |br-a|$ (resp. $dr-c > br-a>0$). We
 say simply that $\frac{a}{b}$ is a \emph{best approximation} (resp.
 \emph{a
 positive best approximation}) of $r$ if $\frac{a}{b}$ is a best
  approximation (resp. a positive best approximation) relatively to
  $1$.
\end{definition}

\begin{remark}
  Our definition of best approximation corresponds to what is often
  called in the literature \emph{best approximation of second kind}
  (see \cite{MR0161833}). 
\end{remark}

Everything we need about continued fractions is
contained in the following theorem (see \cite[Th. 15 and Th.
16]{MR0161833}).

\begin{theorem}\label{th:continued}
  Let $x=[a_0; a_1, \ldots, a_n]$. 
  \begin{enumerate}
    \item Every convergent $p_k/q_k$ is
  a best approximation of $x$. 
\item Reciprocally, every
  best approximation of $x$ is a convergent, the
  only exceptions being the cases $x=a_0+\kappa$, with $\kappa
\in [1/2,1[$,
  $\frac{p_0}{q_0}=\frac{a_0}{1}$.
  \end{enumerate}

  Moreover, for $i=0, \ldots, n-1$,
$x-\frac{p_i}{q_i}>0$ for $i$ even and $x-\frac{p_i}{q_i}<0$ for $i$
odd.
\end{theorem}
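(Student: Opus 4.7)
The plan is to reduce the whole theorem to a single algebraic identity together with a sign analysis. Setting $\xi_k = q_k x - p_k$ and using the conventions $p_{-1} = 1,\, q_{-1} = 0$, I would first prove by induction on $k$, directly from the recurrences $p_k = a_k p_{k-1} + p_{k-2}$ and $q_k = a_k q_{k-1} + q_{k-2}$, the fundamental identity
\[
p_k q_{k-1} - p_{k-1} q_k = (-1)^{k-1}.
\]
Dividing by $q_k q_{k-1}$ produces $\frac{p_k}{q_k} - \frac{p_{k-1}}{q_{k-1}} = \frac{(-1)^{k-1}}{q_k q_{k-1}}$, so consecutive convergents lie on opposite sides of their limit $x$; combined with an easy induction using the recurrence (even-indexed convergents form an increasing subsequence, odd-indexed ones a decreasing subsequence), this yields the final sign clause of the theorem. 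The same identity applied to $\xi_k,\xi_{k-1}$ gives $q_k\xi_{k-1} - q_{k-1}\xi_k = (-1)^k$, from which the strict monotonicity $|\xi_k| < |\xi_{k-1}|$ follows (using the opposite-sign alternation together with the convention $a_n \neq 1$).

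For part 1, I would fix $k \geq 1$ and take any $c/d$ with $1 \leq d \leq q_k$ and $c/d \neq p_k/q_k$. The identity above makes the matrix $\bigl(\begin{smallmatrix} p_k & p_{k-1} \\ q_k & q_{k-1}\end{smallmatrix}\bigr)$ unimodular, so there are unique integers $u,v$ with $c = up_k + vp_{k-1}$ and $d = uq_k + vq_{k-1}$. A case analysis on signs eliminates: $v = 0,\, u = 1$ (forces $c/d = p_k/q_k$); $v = 0$, $u \neq 1$ (forces $d \leq 0$ or $d > q_k$); both $u,v \geq 1$ (forces $d > q_k$); and both $u,v \leq 0$ not both zero (forces $d \leq 0$). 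The only surviving configurations are $u = 0,\, v \geq 1$ (so $c/d = p_{k-1}/q_{k-1}$, giving $|dx - c| = v|\xi_{k-1}| > |\xi_k|$ by monotonicity), and $u,v$ both nonzero of opposite signs. In the latter case, since $\xi_k$ and $\xi_{k-1}$ also have opposite signs by part 3, the two summands of $dx - c = u\xi_k + v\xi_{k-1}$ have the \emph{same} sign, so $|dx - c| = |u|\,|\xi_k| + |v|\,|\xi_{k-1}| > |\xi_k|$ as required. The case $k = 0$ reduces to the elementary inequality $|x - c| > |x - a_0|$ for every integer $c \neq a_0$, which holds precisely when $\kappa := x - a_0 < 1/2$.

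For part 2, I would let $a/b$ be a best approximation and pick the largest $k$ with $q_k \leq b$. If $a/b \neq p_k/q_k$ and $k \geq 1$, the sign analysis applied at level $k$ to $(c,d) = (a,b)$ produces $|bx - a| > |\xi_k|$, contradicting the best-approximation property since $q_k \leq b$. The only residual case is $k = 0$ (i.e.\ $b = 1$): the relevant competitor for $a_0$ is the integer $a_0 + 1$, and the inequality $|x - (a_0 + 1)| > |x - a_0|$ needed for $p_0/q_0 = a_0/1$ to qualify as a best approximation fails precisely when $\kappa \in [1/2, 1[$. This is exactly the exception recorded in the theorem, and there is no other. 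The main obstacle will be the sign bookkeeping in part 1, in particular the exhaustive case distinctions on $(u,v)$, together with the isolation of the $k = 0$ boundary behaviour; both are standard and elementary, so for a polished write-up I would follow Khintchin's presentation in \cite{MR0161833}, which the paper already cites.
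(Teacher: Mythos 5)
The paper does not prove this statement at all: it is quoted as a known classical result, with an explicit pointer to Theorems 15 and 16 of Khinchin's book \cite{MR0161833}, so there is no in-paper argument to compare yours against. Your sketch is the standard textbook proof (the identity $p_kq_{k-1}-p_{k-1}q_k=(-1)^{k-1}$, sign alternation of $\xi_k=q_kx-p_k$, monotonicity $|\xi_k|<|\xi_{k-1}|$, and the unimodular change of basis $(c,d)=(up_k+vp_{k-1},\,uq_k+vq_{k-1})$ with a case analysis on the signs of $u,v$), and it is essentially sound; the sign in your intermediate identity should read $q_k\xi_{k-1}-q_{k-1}\xi_k=(-1)^{k-1}$, but nothing depends on it.

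One step is genuinely underspecified. In part 2 you take the largest $k$ with $q_k\leq b$ and invoke ``the sign analysis applied at level $k$''; but the elimination of the case $u,v\geq 1$ in part 1 relied on the hypothesis $d\leq q_k$, whereas here $b$ may lie strictly between $q_k$ and $q_{k+1}$, so that case survives (it is exactly the case of the semiconvergents $\frac{up_k+vp_{k-1}}{uq_k+vq_{k-1}}$). You must either run the decomposition at level $k+1$, writing $(a,b)$ in the basis $(p_{k+1},q_{k+1}),(p_k,q_k)$, where the constraint $b<q_{k+1}$ does kill the both-positive case, or add the estimate $|u\xi_k+v\xi_{k-1}|\geq|\xi_{k+1}|+(a_{k+1}-u)\,|\xi_k|+(v-1)\,|\xi_{k-1}|$ valid for $1\leq u\leq a_{k+1}$, $v\geq 1$, which still yields $|bx-a|>|\xi_k|$ once $(u,v)=(a_{k+1},1)$ (i.e.\ $b=q_{k+1}$) is excluded. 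With that repair, and your correct isolation of the $k=0$ boundary case $\kappa\in[1/2,1[$, the argument is complete; deferring the polished write-up to Khinchin, as you propose, is exactly what the authors themselves do.
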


Let $r$ be a real number and $b$ an integer. In the following, it is
convenient to denote by $\min(r,b)$ (resp. $\minplus(r,b)$) the integer $a$ such that
$|b\cdot r-a|=\min\{ |b\cdot r-k|, k\in \Z \}$ (resp. such that
$b\cdot r-a=\min\{
b\cdot r-k, k\in \Z\,\text{with}\, b\cdot r-k >0\}$).
Then, for $r$ a real number and $b$ a positive integer, we let
$\proche r b=b\cdot r - \min(r,b)$ and
$\procheplus r b=b\cdot r-\minplus(r,b)$.  

\begin{example}
  Let $r=0.9$ and $b=2$. Then we have $\min(r,b)=2$,
  $\minplus(r,b)=1$, $\proche r b=-0.2$ and $\procheplus r b=0.8$.
\end{example}

We need the following lemma:
\begin{lemma}\label{sec3:remark1} We have:
  \begin{itemize}
    \item for all $j \in \{0, \ldots, n \}$,
      $\proche x {q_{j}} > 0$ if $j$ is even, $\proche x {q_{j}} < 0$
      if $j$ is odd;
    \item for $j \in \{1, \ldots, n-2\}$ for all $\zeta$ integer such
      that
      $0 \leq \zeta < a_{j+2}$,  $\zeta \cdot \proche x
      {q_{j+1}} + \proche x {q_j}$  has the same sign has
      $\proche x {q_{j}}$.
  \end{itemize}
  Moreover for all $j \in \{1, \ldots, n-2 \}$ and 
all $\zeta$ integer such
      that
      $0 \leq \zeta < a_{j+2}$,     
      $$\proche x {\zeta\cdot q_{j+1}+q_{j}} = \zeta \cdot \proche x
      {q_{j+1}} + \proche x {q_j}.$$   

\end{lemma}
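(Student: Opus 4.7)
The plan is to first rewrite $\proche{x}{q_j}$ in the convenient form $q_j x - p_j$, and then to exploit the linear recurrences among the $p_k, q_k$ together with the monotonicity of a simple affine function of $\zeta$.

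For the first bullet, set $\epsilon_j := q_j x - p_j$. By Theorem~\ref{th:continued}, $p_j/q_j$ is a best approximation of $x$ (of second kind, as is standard in the continued fraction literature); in particular, taking $d = q_j$ in the definition of best approximation and varying $c$ shows that $|q_j x - c|$ is strictly minimised over $c \in \Z$ at $c = p_j$. Hence $\min(x,q_j) = p_j$ and $\proche{x}{q_j} = \epsilon_j$. The signs $\epsilon_j > 0$ for $j$ even and $\epsilon_j < 0$ for $j$ odd then follow from the ``Moreover'' part of Theorem~\ref{th:continued} after multiplication by $q_j > 0$.

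For the second bullet, the recurrences $q_{j+2} = a_{j+2}q_{j+1} + q_j$ and $p_{j+2} = a_{j+2}p_{j+1} + p_j$ give $\epsilon_{j+2} = a_{j+2}\,\epsilon_{j+1} + \epsilon_j$. Consider the affine function $g(\zeta) := \zeta\,\epsilon_{j+1} + \epsilon_j$. By the sign analysis just performed, $\epsilon_j$ and $\epsilon_{j+2}$ share the sign of $(-1)^j$, whereas $\epsilon_{j+1}$ carries the opposite one. Since $g(0) = \epsilon_j$ and $g(a_{j+2}) = \epsilon_{j+2}$ both carry the sign of $\epsilon_j$, and $g$ is strictly monotonic in $\zeta$, all intermediate values $g(\zeta)$ for $0 \leq \zeta < a_{j+2}$ must also share the sign of $\epsilon_j$. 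This is precisely the content of the second bullet.

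Finally, the identity
$$(\zeta q_{j+1} + q_j)\,x \;-\; (\zeta p_{j+1} + p_j) \;=\; g(\zeta)$$
shows that the claim $\proche{x}{\zeta q_{j+1} + q_j} = g(\zeta)$ is equivalent to asserting that $\zeta p_{j+1} + p_j$ is the (unique) integer closest to $(\zeta q_{j+1} + q_j)\,x$, i.e.\ to the bound $|g(\zeta)| < 1/2$. This inequality is the main obstacle. The monotonicity argument of the previous paragraph gives $|g(\zeta)| \leq |\epsilon_j|$, and the classical estimate $|\epsilon_j| = |q_j x - p_j| < 1/q_{j+1}$ (derived by expanding $x$ in terms of the complete quotient $\alpha_{j+1}$, or invoked directly from \cite{MR0161833}) reduces the problem to $q_{j+1} \geq 2$. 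Since $j \geq 1$ one has $q_{j+1} \geq q_2 = a_1 a_2 + 1 \geq 2$, hence $|g(\zeta)| < 1/2$, and the ``Moreover'' follows.
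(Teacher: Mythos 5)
Your proof is correct, and its central step is genuinely different from the paper's. Both arguments share the same skeleton: identify $\proche x {q_k}$ with $\epsilon_k = q_k x - p_k$ via the best-approximation property, show that $g(\zeta) = \zeta\,\epsilon_{j+1} + \epsilon_j$ keeps the sign of $\epsilon_j$, and finally check $|g(\zeta)| < 1/2$ so that $\zeta p_{j+1} + p_j$ is the nearest integer to $(\zeta q_{j+1} + q_j)x$. For the sign-preservation step, however, the paper argues by contradiction: taking the smallest offending $\zeta$, it shows that $\zeta q_{j+1} + q_j$ would produce a best approximation with denominator strictly between $q_{j+1}$ and $q_{j+2}$, contradicting the second part of Theorem \ref{th:continued}. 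You instead use the recurrences $p_{j+2} = a_{j+2}p_{j+1} + p_j$ and $q_{j+2} = a_{j+2}q_{j+1} + q_j$ to get $g(a_{j+2}) = \epsilon_{j+2}$, and conclude by monotonicity of the affine function $g$ between two endpoints of the same sign. This is shorter, avoids any appeal to the converse direction of the best-approximation theorem, and hands you the bound $|g(\zeta)| \leq |\epsilon_j|$ needed for the ``Moreover'' for free (since $g$ is decreasing from $\epsilon_j$ and stays positive, say, when $\epsilon_j > 0$); the paper instead invokes $|\proche x {q_j}| \leq 1/2$ directly, while you go through $|\epsilon_j| < 1/q_{j+1} \leq 1/2$ --- equivalent in substance. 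Two caveats, both shared with the paper's own proof and really defects of the statement rather than of either argument: when $j+2 = n$ one has $\epsilon_{j+2} = 0$, so the endpoint ``carries the sign of $\epsilon_j$'' only weakly, but strict monotonicity (using $\epsilon_{j+1} \neq 0$, valid since $j+1 < n$) still gives the strict sign for $\zeta < a_{j+2}$; and the identification $\min(x,q_0) = p_0$ underlying the first bullet at $j=0$ rests on $p_0/q_0$ being a best approximation, which is exactly the exceptional case flagged in Theorem \ref{th:continued}.
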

\begin{proof}
  The fact that $\proche x {q_{j}} > 0$ if $j$ is even, $\proche x
  {q_{j}} < 0$ if $j$ is odd
  is an immediate consequence of Theorem \ref{th:continued}. 

   If $\zeta=0$, there is nothing
  to prove. We suppose for instance that 
  $\proche x {q_j} > 0$ and $\proche x {q_{j+1}} < 0$ (the other case
  can be treated in a similar manner). Suppose that for $0<\zeta < a_{j+2}$, we have 
\begin{equation}\label{lemma:continued:eq1}
\proche x {q_j} + \zeta \cdot \proche
  x {q_{j+1}}<0.
\end{equation}
Let $\zeta$ be the smallest verifying (\ref{lemma:continued:eq1}),
then $\zeta \geq 2$ since we have by definition of a best
approximation $|\proche x {q_{j}}| > |\proche x {q_{j+1}}|$. Then,
as $\proche x {q_j} + (\zeta-1) \cdot \proche x {q_{j+1}} > 0$,
we have
$|\proche x {q_j} + \zeta \cdot \proche x {q_{j+1}}|< |\proche x
{q_{j+1}}|$ which is a contradiction with the fact that there is no
best approximation of $x$ the denominator of which is between $q_{j+1}$ and
$q_{j+2}=a_{n+2}q_{j+1}+q_j > \zeta\cdot  q_{j+1}+q_j$.

With our hypothesis, for all
  integer $\zeta$ such that $0< \zeta < a_{j+2}$, we have $\proche x {q_j} > \proche x {q_j} + \zeta \cdot \proche
  x {q_{j+1}}$.   
  Thus we have 
  we have $\proche x {q_j} > 
\zeta (q_{j+1}\cdot x - \min(x,q_{j+1})) + q_j \cdot x -
\min(x,q_j)> 0$, so that $1/2 > (\zeta q_{j+1}+q_j)\cdot x -
\zeta \min(x,q_{j+1}) - \min(x,q_j) > 0$ (remember that as $j\geq 1$,
$\proche x {q_j} \leq 1/2$). As a consequence,  $\zeta
\min(x,q_{j+1}) +\min(x,q_j)=\min(x,\zeta q_{j+1}+q_j)$ thus
$\proche x {\zeta\cdot q_{j+1}+q_{j}} = \zeta \cdot \proche x
      {q_{j+1}} + \proche x {q_j}.$
\end{proof}

For $x=[a_0; a_1, \ldots, a_n] \in \Q$ and $\gamma$ a positive
integer, we would like to be able to obtain the list of positive best
approximations of $x$ relatively to $\gamma$. The lemma tells us that
not only the convergents $p_{2i}/q _{2i}$ for $i \in \{0, \ldots,
\lfloor n/2 \rfloor \}$ are positive best approximations of $x$ but
also the $\minplus (x, q_{2i}+\mu q_{2i+1})/(q_{2i}+\mu
q_{2i+1})$ for $i \in \{0, \ldots, \lfloor (n-2)/2 \rfloor \}$ and
$\mu$ integer such that $1 < \mu <a_{2i+2}$. The following
proposition states that these are all the positive best
approximations of $x$ and gives a generalisation for the case of a
positive $\gamma$.

\begin{proposition}\label{sec3:propfrac}
  Let $x=a/b$ where $a,b$ are relatively prime integers. Write $x=
  [a_0; a_1, \ldots, a_n]$ and denote by $p_k/q_k$ the
  sequence of convergents associated to the continued fraction $[a_0;
  a_1, \ldots, a_n]$.
  Let $\gamma<b$ be a positive integer. Let $\gamma\leq d \leq b$ be
  an integer such that
  $\frac{\min^+(x,d)}{d}$ is a positive best approximation of $x$ relatively to $\gamma$.
  Let $i$ be the biggest index such that $d-q_{2i+1} \geq
  \gamma$ and let $\lambda$ be the biggest integer such that
  $d-q_{2i+1}-\lambda\cdot q_{2i+2} \geq \gamma$. Then
  \begin{enumerate}[1)]
    \item
  $\frac{\minplus(x,d-q_{2i+1}-\lambda\cdot
  q_{2i+2})}{d-q_{2i+1}-\lambda\cdot q_{2i+2}}$ is a
  positive
  best approximation of $x$ relatively to $\gamma$. 
\item If $e$
  is such that $d-q_{2i+1}-\lambda\cdot q_{2i+2}<e<d$ then $\minplus(x,e)/e$ is not a positive best
  approximation of $x$ relatively to $\gamma$.
  \end{enumerate}

  Moreover, we have 
  \begin{equation}\label{sec3:prop:eq2}
  \procheplus x {d-q_{2i+1}-\lambda\cdot
  q_{2i+2}} - \procheplus x d =  \lambda \cdot
  \proche x {q_{2i+2}} -\proche x {q_{2i+1}}> 0.
\end{equation}
\end{proposition}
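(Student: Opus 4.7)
The plan is to interpret the positive best approximations of $x$ relatively to $\gamma$ as the strict record lows of the map $e \mapsto \procheplus x e$ on integers $e \in [\gamma, b]$, and then to locate the immediate predecessor of $d$ in this sequence of records. The central computational tool is Lemma~\ref{sec3:remark1}, which gives $\proche x {\zeta q_{j+1} + q_j} = \zeta \proche x {q_{j+1}} + \proche x {q_j}$ (of the same sign as $\proche x {q_j}$) for all integers $0 \leq \zeta < a_{j+2}$. A preliminary observation is that the prescribed $\lambda$ satisfies $\lambda < a_{2i+3}$: otherwise $d - d' \geq q_{2i+1} + a_{2i+3} q_{2i+2} = q_{2i+3}$, so $d - q_{2i+3} \geq d' \geq \gamma$ would contradict the maximality of $i$.

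Granted $\lambda < a_{2i+3}$, Lemma~\ref{sec3:remark1} applied with $j = 2i+1$ yields
\begin{equation*}
\proche x {d-d'} \;=\; \proche x {q_{2i+1} + \lambda q_{2i+2}} \;=\; \proche x {q_{2i+1}} + \lambda \proche x {q_{2i+2}} \;<\; 0,
\end{equation*}
the negativity being inherited from $\proche x {q_{2i+1}}$. Since $d$ is a record, $\procheplus x {d'} > \procheplus x d$, which forces the fractional parts of $d'x$ and $(d-d')x$ to sum past $1$; a direct computation then gives $\procheplus x {d'} - \procheplus x d = -\proche x {d-d'}$, and together with the displayed identity this establishes equation~(\ref{sec3:prop:eq2}). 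Its strict positivity is immediate from $\proche x {q_{2i+2}} > 0 > \proche x {q_{2i+1}}$.

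The substantive work is then parts~(1) and~(2). For part~(2), fix $e$ with $d' < e < d$ and set $k = d - e \in (0, q_{2i+1} + \lambda q_{2i+2})$. The strategy is to express $\procheplus x e$ in terms of $\procheplus x d$ and $\proche x k$ via the standard carry rule, then compare with $\procheplus x {d'}$ using the formula just obtained. Splitting the range into $k < q_{2i+1}$ (where one uses that $d$ is a record and applies Lemma~\ref{sec3:remark1} to the residual $q_{2i+1} - k$) and $k \geq q_{2i+1}$ (where $k - q_{2i+1} = \mu q_{2i+2} + r$ with $0 \leq \mu < \lambda$ and $0 \leq r < q_{2i+2}$, and the Lemma is applied recursively), one verifies $\procheplus x e > \procheplus x {d'}$ in each sub-case. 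Part~(1) then follows by induction on the number of positive best approximations relative to $\gamma$ in $[\gamma, d']$: applied to $d'$, the inductive hypothesis identifies its own predecessor via the same greedy rule, confirming that $d'$ is itself a record low.

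The main obstacle will be the case analysis in part~(2): the behaviour of $\procheplus x e$ on intermediate $e$'s involves multiple carries whose pattern depends sensitively on the continued fraction structure, and securing $\procheplus x e > \procheplus x {d'}$ in every sub-case requires the maximality of both $i$ (blocking any $q_{2i+3}$-jump) and $\lambda$ (ensuring $\lambda < a_{2i+3}$), which together make Lemma~\ref{sec3:remark1} applicable with the correct signs.
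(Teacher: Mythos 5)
Your setup matches the paper's: the observation $\lambda < a_{2i+3}$, the use of Lemma \ref{sec3:remark1} to evaluate $\proche x {q_{2i+1}+\lambda q_{2i+2}}$, and the carry identity $\procheplus x {d-\mu} = \procheplus x d - \proche x {\mu}$ (which the paper proves as its equation (\ref{sec3:prop:eq1}), using exactly the best-approximation property of $d$ as you indicate) are all the right ingredients, and they do yield (\ref{sec3:prop:eq2}). But the core of the proposition, parts 1) and 2), is not actually established. For part 2) you announce a case analysis on $k = d-e$ ("one verifies $\procheplus x e > \procheplus x {d'}$ in each sub-case") without carrying it out, and you yourself flag it as the main obstacle; this is precisely where the content lies, because for intermediate $e$ the quantity $\procheplus x e$ is not controlled by a single application of the carry rule. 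For part 1) the proposed "induction on the number of positive best approximations in $[\gamma,d']$" is circular as stated: identifying the predecessor of $d'$ presupposes that $d'$ is itself a positive best approximation, which is the claim to be proved. Showing $d'$ is a record requires comparing $\procheplus x {d'}$ with $\procheplus x e$ for \emph{all} $e \in [\gamma, d')$, and nothing in your outline addresses that range.

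The paper closes both gaps with one argument that sidesteps the multi-carry analysis entirely. It first proves the carry identity for \emph{every} $0 \le \zeta < a_{2i+3}$ (not just $\zeta = \lambda$; the case $\zeta > \lambda$ needs a separate comparison since $d - \mu(\zeta) < \gamma$ there). Then, supposing some $\gamma \le e < d$ satisfies $\procheplus x d < \procheplus x e \le \procheplus x h$, it picks $\zeta$ minimizing $|\procheplus x e - \procheplus x {e(\zeta)}|$ with $e(\zeta) = d - q_{2i+1} - \zeta q_{2i+2}$ and invokes the second part of Theorem \ref{th:continued}: if $\procheplus x e \ne \procheplus x {e(\zeta)}$, then $|e - e(\zeta)| < q_{2i+3}$ would give a best approximation strictly better than $p_{2i+2}/q_{2i+2}$ that is not a convergent, a contradiction. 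Hence $\procheplus x e = \procheplus x {e(\zeta)}$, and monotonicity in $\zeta$ plus the constraint $e \ge \gamma$ forces $\zeta = \lambda$ and $e = h$. This single statement simultaneously gives part 1) (no $e \in [\gamma, h)$ can have $\procheplus x e \le \procheplus x h$) and part 2) (no $e \in (h,d)$ can beat $\procheplus x h$). If you want to keep your structure, you must either execute the full carry case analysis for all $e \in (h,d)$ \emph{and} supply a genuine argument for $e \in [\gamma, h)$, or adopt an exclusion principle of the type the paper uses.
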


\begin{proof}

Let $i$ and $\lambda$ be defined as in the statement.  We remark that
we have $\lambda < a_{2i+3}$. Indeed, by hypothesis
$d-q_{2i+1}-\lambda\cdot q_{2i+2} \geq \gamma$, but we have
$q_{2i+3}=a_{2i+3}\cdot q_{2i+2}+q_{2i+1}$ and we know
that $d-q_{2i+3} < \gamma$.  For $0 \leq \zeta < a_{2i+3}$ an integer,
let $\mu(\zeta)=q_{2i+1}+\zeta.q_{2i+2}$, $h=d-\mu(\lambda)$.

  First, we prove that \begin{equation}\label{sec3:prop:eq1}
    \procheplus x d - \proche x {\mu(\zeta)} = \procheplus x
    {d-\mu(\zeta)}, \end{equation} if $0 \leq \zeta < a_{2i+3}$.
    Using Lemma \ref{sec3:remark1}, we obtain
    \begin{equation}\label{prop3.26:eq1} 0 \leq \min(x,\mu(\zeta)) -
      \mu(\zeta)\cdot x < 1.  \end{equation} As $0\leq d\cdot
      x-\minplus(x,d) < 1$, we have $0 \leq (d-\mu(\zeta))\cdot
      x-\minplus(x,d)+\min(x,\mu(\zeta)) < 2$.  We have to prove that
      $(d-\mu(\zeta))\cdot x-\minplus(x,d)+\min(x,\mu(\zeta)) < 1$.
      Suppose, on the contrary, that $(d-\mu(\zeta))\cdot
      x-\minplus(x,d)+\min(x,\mu(\zeta)) \geq 1$, then because of
      (\ref{prop3.26:eq1}), we have:
      \begin{equation}\label{prop3.26:eq2} 0 \leq (d-\mu(\zeta))\cdot
	x-\minplus(x,d)+\min(x,\mu(\zeta))-1 < d\cdot x-\minplus(x,d).
      \end{equation} If $\zeta \leq \lambda$ this is a contradiction
      with the hypothesis that $\frac{\min^+(x,d)}{d}$ is a positive
      best approximation of $x$ relatively to $\gamma$.  If $\zeta >
      \lambda$ then  $(d-\mu(\zeta))\cdot
      x-\minplus(x,d)+\min(x,\mu(\zeta)) < (d-\mu(\lambda))\cdot
      x-\minplus(x,d)+\min(x,\mu(\lambda))$ because $\procheplus x
      {\mu(\zeta)} > \procheplus x {\mu(\lambda)}$ by Lemma
      \ref{sec3:remark1}. Next, we remark that $(d-\mu(\lambda))\cdot
      x-\minplus(x,d)+\min(x,\mu(\lambda))<1$ by what we have just
      proved, so that we have
      $(d-\mu(\zeta))\cdot x-\minplus(x,d)+\min(x,\mu(\zeta))<1$.  In
      any case, we are done.

Now, suppose that there exists $\gamma \leq e < d$ such that 
\begin{equation}\label{prop3.eq3}
  \procheplus x d < \procheplus x e \leq \procheplus x h.
\end{equation}
For $0\leq \zeta < a_{2i+3}$ a non negative integer, let $e(\zeta)=d-\mu(\zeta)$.
Choose $\zeta$ so that $|\procheplus x e - \procheplus x {e(\zeta)}|$ is minimal.
By (\ref{sec3:prop:eq1}), we know that $\procheplus x {e(\zeta)}= \procheplus
x d - \proche x {\mu(\zeta)}$. As moreover $\procheplus
x d - \proche x {\mu(a_{2i+3})} \leq \procheplus x d$ (following
Lemma \ref{sec3:remark1}) and $\procheplus x {e(\lambda)}=
\procheplus x h$, we deduce that $\lambda \leq \zeta <
a_{2i+3}$. Suppose that $\procheplus x  e  - \procheplus x {e(\zeta)} \neq 0$. 
As for all $\zeta \in \{ \lambda, \ldots , a_{2i+3}-1
\}$, $|\procheplus x {e(\zeta+1)} - \procheplus x {e(\zeta)}|=
|\procheplus x {\mu(\zeta)} - \procheplus x {\mu(\zeta+1)}| =
\proche x {q_{2i+2}}$, we deduce that
$|\proche x {e -
e(\zeta)} |<  \proche x {q_{2i+2}}$ and the fact that $|e-e(\zeta)| <
q_{2i+3}$ contradicts the second statement of
Theorem \ref{th:continued}.

Thus, we have that $\procheplus x e  =
\procheplus x  {e(\zeta)}$. Then, from (\ref{prop3.eq3}), we can write
$\procheplus x e = \procheplus x d - \proche x {\mu(\zeta)} \leq 
\procheplus x {h} = \procheplus x d - \proche x {\mu(\lambda)}$ so that 
$\proche x {\mu(\zeta)} \geq  \proche x {\mu(\lambda)}$. 
Suppose that $\proche x {\mu(\zeta)} >  \proche x {\mu(\lambda)}$
then, as $\lambda \leq \zeta < a_{2i+3}$, it means that $\zeta >
\lambda$. But then, $e=e(\zeta)=d-\mu(\zeta) < \gamma$ which is a
contradiction with the hypothesis $\gamma \leq e$.
As a consequence, we have $\lambda=\zeta$ and $e=h$.

To finish the proof, we note that (\ref{sec3:prop:eq2}) is an
immediate consequence of (\ref{sec3:prop:eq1}) and Lemma
\ref{sec3:remark1}.
\end{proof}

Let $x$ be a rational and $\gamma$ a positive integer.  From the
Proposition \ref{sec3:propfrac}, we immediately obtain an algorithm
(see Algorithm \ref{sec3.3:algo1}) to compute the reserve ordered
list of the integers $q$ such that $\minplus(x,q)/q$ is a positive
best approximation of $x$ relatively to $\gamma$.

\begin{algorithm}
\SetKwInOut{Input}{input}\SetKwInOut{Output}{output}

\Input{
  \begin{itemize}
    \item $x=a/b=[a_0; a_1, \ldots, a_n]$ a rational number ;
    \item the lists of integers $p[k],q[k]$ for $k=0, \ldots, n$, such
      that $p[k]/q[k]$ are the convergents associated to $[a_0; a_1,
      \ldots, a_n]$; 
\item $\gamma\leq b$ a positive integer.
\end{itemize}
}
\Output{$L$ a reverse ordered list of the integers $q$ such that
$\minplus(x,q)/q$ is a positive best approximation of $x$ relatively
to $\gamma$}
\BlankLine
${\tt L} \la [b]$\;
${\tt last} \la b$\;
$t \la n$\;
\eIf{$(t+1) \mod 2 = 0$}{
  ${\tt nextqk} \la t-2$\;
}{${\tt nextqk} \la t-1$\;
}
\While{${\tt nextqk} \geq 0$}{
  \If{${\tt last} - q[{\tt nextqk}] \geq \gamma$}{
      $\displaystyle \lambda \la {\tt floor}\Big( \: \frac{{\tt last} -q[{\tt nextqk}]-\gamma} {q[{\tt nextqk+1]}}\:\Big)$ \;
      ${\tt last} \la {\tt last} - \lambda.q[{\tt nextqk}+1]$ \;
  }
  \While{${\tt last} - q[{\tt nextqk}] \geq \gamma$\label{looparith}}{
      ${\tt last} \la {\tt last} - q[{\tt nextqk}]$\;
      ${\tt L} \la {\tt last} \cup {\tt L}$ \;\label{looparithend}
  }
  ${\tt nextqk} \la {\tt nextqk} -2$\;
}
\If{$L[1] > \gamma$}{
$L \leftarrow \gamma \cup L$\;}
\Return{$L$}\;

\caption{Reverse order list of positive best approximations}\label{sec3.3:algo1}
\end{algorithm}

From Algorithm \ref{sec3.3:algo1}, it is possible to obtain a bound on
the number of positive best approximations of a rational number $x$.
In order to state the following corollary, we introduce a notation:
for $(\mu, \rho,
\chi) \in \R^2 \times \N$, we denote by $L(\mu, \rho, \chi)$
the finite arithmetic sequence with first term $\mu$, common
difference $\rho$ and length $\chi$ (if $\chi$ is zero then the
sequence is considered as empty).
\begin{corollary}
  Let $x=[a_0; a_1, \ldots, a_n]$ be a rational number, denote by
  $p_k/q_k$ for $k=0, \ldots, n$ the associated sequence of
  convergents. Let $\gamma$ be a positive
  integer. The list a positive best approximations of $x$ relatively
  to $\gamma$ has cardinality bounded by $2+\sum_{i=1}^{\lfloor n/2
  \rfloor} a_{2i}$.
  
  Denote by $L$ the finite sequence of increasing integers
  $q$ such that $\minplus(x,q)/q$ is a positive best approximation
  relatively to $\gamma$.
Let $I=\{0, \ldots, \lfloor (n-1)/2 \rfloor\}$. There exist
two sequences $(\mu_i)_{i \in I}$ and $(\chi_i)_{i \in  I}$ with
coefficients respectively in $\Q$ and $\N$ such that $L=\cup_{i \in I}
L(\mu_i, q_{2i+1}, \chi_i)$. 
Moreover, for $i \in I$, the sequence $(\procheplus x q)_{q \in
L(\mu_i,q_{2i+1},\chi_i)}$ is also an arithmetic sequence with common
difference $\proche x {q_{2i+1}} < 0$.
\end{corollary}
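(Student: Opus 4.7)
The plan is to analyze Algorithm \ref{sec3.3:algo1} directly: by Proposition \ref{sec3:propfrac}, each iteration of its outer loop implements the recursion passing from one positive best approximation to the next smaller one, so the list it returns is precisely $L$ (possibly augmented by the boundary terms $b$ and $\gamma$ accounted for in the ``$+2$''). The outer loop partitions the computation into phases indexed by odd convergent denominators $q_{2i+1}$ with $i \in I$; within a single phase, Proposition \ref{sec3:propfrac} applied with the maximal index $i$ fixed reduces each update to a subtraction of $q_{2i+1}$ alone, after the one-time $\lambda$-correction. The elements produced in phase $i$ therefore form an arithmetic sequence $L(\mu_i, q_{2i+1}, \chi_i)$, and taking the union over $i$ yields the decomposition of $L$ claimed in the statement.

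For the statement about the values $\procheplus x q$, I would specialize equation (\ref{sec3:prop:eq2}) to $\lambda = 0$, obtaining
\[
\procheplus x {q + q_{2i+1}} \;-\; \procheplus x q \;=\; \proche x {q_{2i+1}}
\]
for any $q$ such that both $q$ and $q + q_{2i+1}$ lie in $L(\mu_i, q_{2i+1}, \chi_i)$. Since $2i+1$ is odd, Lemma \ref{sec3:remark1} gives $\proche x {q_{2i+1}} < 0$, so $(\procheplus x q)_q$ along this arithmetic sequence is itself arithmetic with exactly the stated common difference.

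For the cardinality bound, the key estimate is that after the $\lambda$-correction in phase $i$, the value of \texttt{last} lies in the interval $[q_{2i+1} + \gamma,\, q_{2i+1} + q_{2i+2} + \gamma)$ (the upper bound comes from maximality of $\lambda$), so
\[
\chi_i \;=\; \lfloor(\texttt{last} - \gamma)/q_{2i+1}\rfloor \;\leq\; a_{2i+2} + 1,
\]
after substituting $q_{2i+2} = a_{2i+2} q_{2i+1} + q_{2i}$ and using $q_{2i} < q_{2i+1}$. Summing these per-phase bounds together with the contributions of the initial $b$ and the possibly appended $\gamma$ should give the stated bound $2 + \sum_{i=1}^{\lfloor n/2 \rfloor} a_{2i}$.

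The main obstacle I anticipate is tightening this sum: the naive per-phase bound $\chi_i \leq a_{2i+2} + 1$ summed over $i \in I$ overshoots by roughly $|I|$, which is incompatible with the constant ``$+2$''. Resolving this requires showing that the ``$+1$'' excesses are mutually incompatible across phases; concretely, $\chi_i = a_{2i+2} + 1$ forces \texttt{last} entering phase $i$ close to the top of its admissible interval, which in turn restricts the value handed to phase $i-1$ and prevents its $\chi$ from attaining its own maximum. A telescoping argument on the sequence of values of \texttt{last} at the start of each phase (or equivalently an inductive case analysis of the running excess) should absorb all bonuses into the constant ``$+2$''.
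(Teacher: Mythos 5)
Your treatment of the second and third assertions is correct and matches the paper's: the inner loop of Algorithm \ref{sec3.3:algo1} visibly produces arithmetic progressions of common difference $q_{2i+1}$, and specializing (\ref{sec3:prop:eq2}) to $\lambda=0$ gives that the values $\procheplus x q$ along each such progression form an arithmetic sequence of common difference $\proche x {q_{2i+1}}<0$.

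The gap is in the cardinality bound, and you have located it precisely but not closed it. Your per-phase estimate $\chi_i\le a_{2i+2}+1$ is the correct one (the paper's own proof asserts the sharper bound $a_{2i+2}$ per phase, which is false: for $x=5/12=[0;2,2,2]$ and $\gamma=1$ one has $q_1=2$, $q_2=5$, and after the $\lambda$-correction the single phase performs $3=a_2+1$ subtractions, the output being $[1,3,5,12]$ of cardinality exactly $2+a_2$). But the compensation argument you gesture at ("a telescoping argument \dots should absorb all bonuses into the constant $+2$") is the entire content of the first assertion and is left unproven; moreover, as sketched it is insufficient, since showing that a bonus at phase $i$ blocks a bonus at phase $i-1$ does not rule out a bonus recurring at phase $i-2$. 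What closes the argument is a propagating invariant: if a phase with denominator $q_{2i+1}$ performs $a_{2i+2}+1$ subtractions, then its exit value satisfies $\texttt{last}-\gamma<q_{2i}$, one convergent level below the generic exit bound $q_{2i+1}$; and any subsequent phase entered in this ``ahead'' state which performs its full quota $a$ of subtractions exits again ahead relative to the next level (using $q_{k+1}=a_{k+1}q_k+q_{k-1}$). Consequently, after a bonus either every later phase stays ahead with no further bonus, and at the last phase $\texttt{last}-\gamma<q_0=1$ forces $\texttt{last}=\gamma$, so the terminal $\gamma$ is not re-appended and the bonus is paid for by the ``$+2$''; or some later phase falls short of its quota and the deficit cancels the bonus. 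Without this (or an equivalent accounting of the running value of \texttt{last} across phases), the bound $2+\sum_i a_{2i}$ does not follow from the per-phase estimates.
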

\begin{proof} 
  To prove the first part of the statement, it suffices to show that
  the number of elements of the list generated by the loop
  beginning in line \ref{looparith} of Algorithm \ref{sec3.3:algo1}
  for a given value of {\tt nextqk} is less than $a_{{\tt nextqk}+1}$.
  Indeed, it is clear from the initialisation of Algorithm
  \ref{sec3.3:algo1} that ${\tt nextqk}$ is
  running through the odd indices in $\{ 0, \ldots, n-1 \}$.
  Now the relation $q[{\tt
  nextqk+1}]=a_{{\tt nextqk}+1} \cdot q[{\tt nextqk}] + q[{\tt nextqk}-1]$
  implies that the loop on line \ref{looparith} is executed at most
  $a_{{\tt nextqk}+1}$ times.
  Taking into account the first and last element in the
  list $L$, we obtain that its cardinality is bounded by
  $2+\sum_{i=1}^{\lfloor n/2 \rfloor} a_{2i}$.

  The second part of the statement is clear, since
  the while loop on line \ref{looparith} build a (reverse ordered)
  arithmetic sequence of common difference $q[{\tt nextqk}]$ and
  the last point is an immediate consequence of
  (\ref{sec3:prop:eq2}).
\end{proof}

\begin{remark}\label{remark:arithseq}
  Denote by $L$ the output of Algorithm \ref{sec3.3:algo1}. By the
  corollary, $L$ is a union of arithmetic sequences each of which can
  be encoded by a triple of integers giving the first term of the
  sequence, its common difference and the number of terms of the
  sequence. Recall that $x=[a_0; a_1, \ldots,
  a_n]$. Using this encoding, the list $L$ can be represented
  (as a data structure) by $O(n)$ bits of
  information. Moreover, it is easy to modify Algorithm
  \ref{sec3.3:algo1} so that it returns the list $L$ encoded in that
  way and have running time $O(n)$. For this, we just have to replace
  lines \ref{looparith}-\ref{looparithend} by:
$$\begin{array}{l}
\displaystyle
{\tt length} \la \tt{floor}\Big( \frac{{\tt last}-\gamma}{q[{\tt nextqk}]}\Big); \smallskip \\
{\tt first} \la {\tt last} - {\tt length} \cdot q[{\tt nextqk}]; \smallskip \\
L \la ({\tt first}, q[{\tt nextqk}], {\tt length}) \cup L; \smallskip \\
{\tt last} \la {\tt first}
\end{array}$$
\end{remark}

We have everything at hand in order to compute efficiently the
generators of $\Nc=\{ x \in \mSnu | v_\nu(x) \geq -\delta / \alpha
\}$. Indeed, consider the line $\mathcal{L}$ given by the equation
$y+x.\frac{\beta}{\alpha}=-\frac{\delta}{\alpha}$. Let $\gamma =
\frac{\delta}{\beta} \mod \alpha$, where
$\frac{\delta}{\beta} \mod \alpha$ is considered as a positive integer
in $\{0, \ldots, \alpha -1 \}$. Then $-\gamma$ is the abscissa of the
first point of the line $\mathcal{L}$ with integer coordinates to the
left of the origin point. Denote by $(q_i)_{i \in I}$ the list of
integers $q_i$ such that $\minplus (\beta/\alpha,q_i)/q_i$ is a positive
best approximation of $\beta/\alpha$ relatively to $\gamma$. Then if
we set $\alpha_i = q_i - \gamma$, it is easily seen that the $\alpha_i$
are precisely the same as the one defined in the Lemma
\ref{sec3.2:lemma2}. 

\begin{corollary}\label{sec3:coro:gen}
  Let $\nu=\beta/\alpha=[a_0; a_1, \ldots, a_n]$.
  Let $\delta$ be an integer. Set
$\Nc=\{ x \in \mSnu | v_\nu(x) \geq -\delta / \alpha
\}$. Then $\Nc$ is generated elements of the form
$(\pi^{\beta_i}.u^{\alpha_i})_{i \in J}$ where the cardinality of $J$
is bounded by $2+\sum_{i=1}^{\lfloor n/2 \rfloor} a_{2i}$. 
Let $I=\{1, \ldots, \lfloor n/2 \rfloor\}$. 
There exist
two sequences $(\mu_i)_{i \in I}$ and $(\chi_i)_{i \in  I}$ with
coefficients respectively in $\Q$ and $\N$ such that $(\alpha_i)_{i
\in J}=\cup_{i \in I}
L(\mu_i, q_{2i+1}, \chi_i)$. Moreover, the sequence
$v_\nu(\pi^{\beta_i}.u^{\alpha_i})_{\alpha_i \in
L(\mu_i,q_{2i+1},\chi_i)}$ is also an arithmetic sequence. 
\end{corollary}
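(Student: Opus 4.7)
The plan is to combine Lemma \ref{sec3.2:lemma2} with the preceding corollary on positive best approximations, via the geometric dictionary sketched in the paragraph just before the statement of the corollary. Lemma \ref{sec3.2:lemma2} already tells us that $\Nc$ is generated by a family $(\pi^{\beta_i}u^{\alpha_i})$ indexed by the integer points lying immediately above the line $\mathcal{L}:y+\nu x=-\delta/\alpha$, selected by the rule that the $\alpha_i$ form a strictly increasing sequence and, for each $i$, $\beta_i+\alpha_i\nu$ is minimal among all integer points with abscissa in $\{0,\ldots,\alpha_i\}$ that lie (weakly) above $\mathcal{L}$. So it suffices to count and describe this set of distinguished integer points.

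The first step is to translate the problem into a statement about positive best approximations of $\nu=\beta/\alpha$. Let $\gamma$ be the representative of $-\delta/\beta \bmod \alpha$ in $\{0,\ldots,\alpha-1\}$, so that $(-\gamma,\ast)$ is the first integer point on $\mathcal{L}$ to the left of the origin. Making the change of variable $q=\alpha_i+\gamma$, the condition that $(\alpha_i,\beta_i)$ lies above $\mathcal{L}$ with minimal $\beta_i+\alpha_i\nu$ among abscissae $\leq\alpha_i$ becomes exactly the condition that $q\cdot\nu-\minplus(\nu,q)$ is minimal among all integers in $\{\gamma+1,\ldots,q\}$, that is, that $\minplus(\nu,q)/q$ is a positive best approximation of $\nu$ relative to $\gamma$ in the sense of the preceding definition. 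Conversely, every such $q$ produces exactly one admissible pair $(\alpha_i,\beta_i)$. This sets up a bijection between the index set $J$ and the denominators of positive best approximations of $\nu$ relative to $\gamma$.

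Once this dictionary is established, the bound on the cardinality of $J$ and the decomposition into arithmetic progressions both follow immediately by applying the previous corollary to $x=\nu$: it gives $|J|\leq 2+\sum_{i=1}^{\lfloor n/2\rfloor}a_{2i}$ and the union description $(\alpha_i)_{i\in J}=\bigcup_{i\in I}L(\mu'_i,q_{2i+1},\chi_i)$ after shifting each arithmetic progression by $-\gamma$ (which only changes the initial term $\mu'_i$, not the common difference or length). For the last assertion, we simply note that $v_\nu(\pi^{\beta_i}u^{\alpha_i})=\beta_i+\alpha_i\nu$ equals $\bigl(\procheplus{\nu}{\alpha_i+\gamma}-\gamma\nu\bigr)/1$ up to an additive constant depending only on $\delta$ and $\gamma$; since the preceding corollary already asserts that $(\procheplus{\nu}{q})_{q\in L(\mu_i,q_{2i+1},\chi_i)}$ is an arithmetic sequence with common difference $\proche{\nu}{q_{2i+1}}$, the valuations $v_\nu(\pi^{\beta_i}u^{\alpha_i})$ form an arithmetic sequence along each $L(\mu'_i,q_{2i+1},\chi_i)$ as well.

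The main obstacle is to verify rigorously that the two-sided minimality condition in Lemma \ref{sec3.2:lemma2}, namely that $\beta_i+\alpha_i\nu$ is strictly smaller than $y+x\nu$ for every other integer point $(x,y)$ above $\mathcal{L}$ with $0\leq x\leq\alpha_i$, matches exactly the defining inequality of a \emph{positive} best approximation relative to $\gamma$ (as opposed to the weaker notion of a best approximation of second kind), so that no spurious pair appears and none is missed. This amounts to a careful case analysis at the boundary, in particular at $q=\gamma$ which corresponds to $\alpha_0=0$ and to the possible final term of the last progression; the remaining verifications are bookkeeping.
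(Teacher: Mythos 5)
Your proof follows essentially the same route as the paper: the paper gives no separate proof of this corollary, relying exactly on the dictionary $\alpha_i = q_i - \gamma$ set up in the paragraph preceding the statement (identifying the generators of Lemma \ref{sec3.2:lemma2} with denominators of positive best approximations of $\nu$ relative to $\gamma$) and then transferring the cardinality bound, the arithmetic-progression decomposition, and the statement about $\procheplus{\nu}{q}$ from the previous corollary. The boundary verification you flag as the ``main obstacle'' is precisely what the paper dismisses with ``it is easily seen,'' so your account is, if anything, slightly more candid about where the residual work lies.
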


By gathering all the results of this section, we obtain: 
\begin{theorem}\label{th:main}
  Let $\nu = [a_0; a_1, \ldots, a_n]$.
Let $\M$ be a sub-$\mSnu$-module of $\mSnu^d$. Then a bound on the
number of generators of $\Max(\M)$ is $d.(2+\sum_{i=1}^{\lceil n/2
\rceil}
a_{2i})$. These generators can be represented by $d$ vectors of
$\mSnu^d$ and $d\cdot \lfloor n/2 \rfloor$ arithmetic sequences of the
form $L(\mu,q,\chi)$ where $q$ is the denominator of a convergent of
odd index associated to  $[a_0; a_1, \ldots, a_n]$.
\end{theorem}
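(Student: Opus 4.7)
My plan is to assemble the bound by combining three ingredients that are already established in the paper: the output of the MatrixReduction algorithm, the intersection formula $\Max(\M) = \Max(\M \otimes_{\mSnu} \eSnu) \cap \mSnu^d$, and the continued fraction count from Corollary~\ref{sec3:coro:gen}.

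First, I would run the MatrixReduction algorithm on a generating family of $\M$. By Lemma~\ref{sec3.2:lemmamax} and Remark~\ref{rqbasis}, this produces a free $\eSnu$-module $\M'_1 = \Max(\M \otimes_{\mSnu} \eSnu) \subset (\eSnu)^d$ with an explicit basis of the form $(\Rpi^{\delta_i} e_i)_{i=1,\ldots,k}$, where $e_i \in \mSnu^d$ and $k \leq d$. Applying the corollary at the end of \S\ref{subsec:compmax}, we have $\Max(\M) = \M'_1 \cap \mSnu^d$.

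Next, because the $(e_i)$ are part of a $\mSnu$-basis after clearing denominators, one verifies immediately that
\[
\Max(\M) \;=\; \bigoplus_{i=1}^k \bigl(\eSnu \cdot \Rpi^{\delta_i} \cap \mSnu\bigr)\cdot e_i,
\]
so each summand is a module of the shape $\Nc_i \cdot e_i$ where $\Nc_i = \{x \in \mSnu \mid v_\nu(x) \geq -\delta_i/\alpha\}$ (up to an integer rescaling of $\delta_i$ modulo $\alpha$, which does not affect the shape of $\Nc_i$). Hence a set of generators of $\Max(\M)$ is obtained by taking, for each $i$, a system of generators of $\Nc_i$ and multiplying it by $e_i$.

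Finally, by Corollary~\ref{sec3:coro:gen}, each $\Nc_i$ is generated by at most $2 + \sum_{i=1}^{\lfloor n/2 \rfloor} a_{2i}$ elements of the form $\pi^{\beta}u^{\alpha}$, and the list of the exponents $\alpha$ decomposes as a union of arithmetic sequences $L(\mu, q_{2j+1}, \chi_j)$ indexed by $j \in \{0,\ldots,\lfloor (n-1)/2\rfloor\}$ (which is essentially $\lfloor n/2 \rfloor$ sequences), whose common differences $q_{2j+1}$ are denominators of odd-index convergents. Summing over the $k \leq d$ indices gives the bound $d\cdot(2+\sum_{i=1}^{\lceil n/2 \rceil} a_{2i})$, and provides the claimed compact encoding by $d$ basis vectors $e_i \in \mSnu^d$ together with $d\cdot \lfloor n/2 \rfloor$ arithmetic sequences $L(\mu,q,\chi)$. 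The only subtlety, which I would need to double-check, is the bookkeeping between $\lfloor n/2 \rfloor$ and $\lceil n/2 \rceil$ in the summation, coming from the convention that $a_n \neq 1$ and from whether $n$ is even or odd; this amounts to observing that the missing index contributes a trivial term and is absorbed by the constant $2$.
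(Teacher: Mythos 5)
Your proposal is correct and follows essentially the same route as the paper, whose own ``proof'' is precisely the assembly you describe: the decomposition $\Max(\M)=\M'_1\cap\mSnu^d=\bigoplus_{i=1}^k(\eSnu\cdot\Rpi^{\delta_i}\cap\mSnu)\cdot e_i$ with $k\le d$ coming from MatrixReduction and Lemma~\ref{sec3.2:lemmamax}, followed by Corollary~\ref{sec3:coro:gen} applied to each rank-one factor. Your remark about $\lfloor n/2\rfloor$ versus $\lceil n/2\rceil$ is also apt --- the corollary actually yields the sharper $\lfloor n/2\rfloor$ bound, and the theorem's $\lceil n/2\rceil$ is simply the weaker (and, for odd $n$, slightly sloppily stated) version.
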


\subsubsection{Application: scalar extension of $\mSnu$-modules}

Let $\nu', \nu \in \Q$ such that $\nu' > \nu$, there is a natural
inclusion $\theta_{\nu,\nu'} : \mSnu \rightarrow \mSnup$. Given a
module $\M$ over $\mSnu$, We would
like to compute the module $\Max(\M \otimes_{\mSnu} \mSnup) \in
\Max_{\mSnup}^d$. If $M=(m_{ij})\in M_{d \times k}(\mSnu)$ is a matrix
representing  $\M$, it can be done by calling the algorithm
MatrixReduction on the matrix $(\theta_{\nu,\nu'}(m_{ij}))$. 

Nevertheless, \emph{if $\M$ is maximal}, there is another better way to 
carry out this computation. Assume that $\M$ is
represented by a couple $(M',L')$ with $M' \in M_{d \times k}(\mSnu)$
and $L'=[\alpha_1, \ldots, \alpha_k]$ is a list of integers. Let
$(f_1, \ldots, f_k)$  with $f_i=\Rpi^{\alpha_i}\cdot e_i$ for
$i=1,\ldots, k$ and $e_i \in \mSnu^d$ be the basis of $\Phi(\M)$
given by the column vectors associated to the couple $(M',L')$ (see
Remark \ref{rqbasis}). Then by definition $\M$ is generated by the
sub-$\mSnu$-modules $F_i=f_i.\eSnu \cap \mSnu^d$. Moreover, using
Algorithm \ref{sec3.3:algo1}, one can recover a family of generators
of $F_i$ which are of the form $s_j \cdot e_i$ with $s_j \in \mSnu$
and following Remark \ref{remark:arithseq} it is possible to encode
the generators of $F_i$ by a list of arithmetic sequences. As this
representation is very compact, we would like take advantage of it in order to
compute the scalar extension. By working component by component,
we only have
to consider the case of a sub-$\mSnu$-module of $\mSnu$, $\Nc=\{ x \in
\mSnu | v_\nu(x) \geq -\delta / \alpha\}$ for $\delta \in \N$. Then it
has been seen in Corollary \ref{sec3:coro:gen} that $\Nc$ is generated
elements of the form $(\pi^{\beta_i}.u^{\alpha_i})_{i \in J}$.  More
precisely, write $\nu=[a_0; a_1, \ldots, a_n]$ and let $I=\{1,
\ldots, \lfloor n/2 \rfloor\}$. Then, there exists  three sequences
$(\mu_i)_{i \in I}$, where $(\zeta_i)_{i \in I}$ and $(\chi_i)_{i \in
I}$ with coefficients respectively in $\Q$, $\N$ and $\N$ such that
$(\alpha_j)_{j \in J}=\cup_{i \in I} L(\mu_i, \zeta_i, \chi_i)$. Let
$\Nc'=\Nc \otimes_{\mSnu} \mSnup$. Of course, the sequence
$(\pi^{\beta_j}.u^{\alpha_j})_{j \in J}$ has coefficients in $\mSnup$ and
is a family of generators of $\Nc'$. Hence, $\Max(\Nc')$ corresponds
to the couple $(M',L')$ where the unique element of $L'$ is given the
minimum of all quantites $\beta_j + \nu' \cdot \alpha_j$ when $j$ runs
over $J$. Now, we remark that the sequence $\beta_j + \nu' \cdot 
\alpha_j$ is arithmeric when $j$ runs over one subset 
$L(\mu_i, \zeta_i, \chi_i)$. On this subset, the minimum is reached
for the first index or the last one. Thus, to compute $L'$, it is enough 
to take the minimum over these particular indices.
It yields an algorithm whose complexity is $O(n)$ --- or $O(nd)$ for
the $d$-dimensional case --- where we recall that $n$ is the length of 
the continued fraction of $\nu$ (in particular $n = O(1 + \min(\log 
|\alpha|, \log |\beta|))$ if $\nu = \frac {\alpha}{\beta}$.)

\subsection{Comparing the two approaches}

We have introduced two different ways to represent $\mSnu$-modules and 
compute with them. It is important to compare the two approaches since 
they are well suited for different kind of applications.  We call the 
representation of \S \ref{sec:useful} the $(M_\pi, M_u)$-representation 
and the representation of \S \ref{subsec:iwasawa} the 
$(M,L)$-representation.

First, we explain how to go back and forth between the two
representations. Let $\M \in \Max_{\mSnu}^d$ given with the
$(M,L)$-presentation by the couple $(M,L)$ with $M \in M_{d \times
k}(\mSnu)$
and $L$ is a list of integers. We can recover a matrix $M_1$ with
coefficients in $\mSnu$ whose columns vectors gives generators of $\M$
in the canonical basis of $\mSnu^d$. Then to obtain the couple
$(M_\pi,M_u)$ representing $\M$ we just have to compute the Hermite
Normal Forms of $M_1 \otimes_{\mSnu} \mSpnu$ and $M_1 \otimes_{\mSnu}
\mSunu$. 

We explain how to compute the $(M,L)$-representation associated to a
$(M_\pi,M_u)$-representation in the case that the associated module
$\M \in \Max_{\mSnu}^d$ has full rank. Suppose we are given the couple
$(M_\pi, M_u)$ representing $\M$ where $M_\pi=(m_{\pi,i,j}) \in M_{d
\times k}(\mSpnu)$ and
$M_u=(m_{u,i,j}) \in M_{d \times k}(\mSunu)$.  We can suppose, by multiplying $M_\pi$ by a certain
power of $\pi$ (which is invertible in $\mSpnu$), that all the
$m_{\pi,i,j} \in \mSnu$. As the coefficients of $M_u$ are defined
modulo a certain power of $\pi$ (namely the determinant of $M_u$), we
can also suppose, by multiplying $M_u$ by a certain power of
$u^\alpha/\pi^{\beta}$ (which is invertible in $\mSunu$), that all the
coefficients of $M_u$ belongs to $\mSnu$.  Let $D_u=\det(M_u) \in
\mSnu$. On the other side, let $D_\pi =
\det(M_\pi)/\Rpi^{\alpha\cdot v_{\nu}(\det(M_\pi))} \in \eSnu$. By
definition, we have $v_\nu(D_\pi)=0$.  Denote by $\M_0^\pi$
(resp. $\M_0^u$) the sub-$\eSnu$-module of $({\eSnu})^d$ generated by the
column vectors of $D_u M_\pi$ (resp. $D_\pi M_u$), considered as matrices with
coefficients in $\eSnu$. We can prove:

\begin{lemma}\label{sec3.4:lastlemma}
  Keeping the above notations, we have:
  $$\Max((\M_u \cap \M_\pi) \otimes_{\mSnu} \eSnu) = \Max(\M_0^\pi +
  \M_0^u).$$
\end{lemma}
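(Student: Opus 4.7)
The plan is to rewrite the LHS via Theorem \ref{prop:bijection} and then establish that the natural inclusion $\iota : \M_0^\pi + \M_0^u \hookrightarrow \M \otimes_{\mSnu} \eSnu$ of sub-$\eSnu$-modules of $\eSnu^d$ is a quasi-isomorphism; once this is done, both modules saturate to the same free $\eSnu$-module inside $\eSnu^d$ (cf.\ Lemma \ref{sec3.1:lemmax1} and Proposition \ref{prop:qis}), which is exactly the claimed equality. The first step is immediate: since $\M$ is maximal and represented by $(M_\pi, M_u)$, Theorem \ref{prop:bijection} gives $\M_u \cap \M_\pi = \M$ in $\mE^d$, so the LHS is $\Max(\M \otimes \eSnu)$.

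For the inclusion, one applies Cramer's rule to $M_u$: because $D_u = \det M_u$, the matrix $D_u M_u^{-1}$ has coefficients in $\mSunu$, and for each column $c_i$ of $M_\pi$ the identity $D_u c_i = M_u \cdot (D_u M_u^{-1} c_i)$ places $D_u c_i$ inside $M_u \mSunu^d = \M_u$. Since $D_u c_i$ also lies in $M_\pi \mSnu^d \subset \M_\pi$ and in $\mSnu^d$, it belongs to $\M_\pi \cap \M_u = \M$. A symmetric argument, based on the factorization $\det M_\pi = D_\pi \cdot \pi^{v_\nu(\det M_\pi)}$ with $D_\pi \in \eSnu$ distinguished and on Cramer's rule for $M_\pi$, shows $D_\pi \cdot c_j(M_u) \in \M$ for every column $c_j$ of $M_u$. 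Therefore $\M_0^\pi + \M_0^u \subset \M \subset \M \otimes \eSnu$.

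The remaining task, which is the main obstacle, is to prove that $\iota$ is a quasi-isomorphism, i.e.\ that its cokernel $Q$ has finite length over $\eSnu$. Via Lemma \ref{sec1:lempiso}, $\eSnu$ is a two-dimensional regular local ring, and a finitely generated $\eSnu$-module has finite length exactly when it is killed by some power of $\pi$ and some power of $u^\alpha/\pi^\beta$, equivalently when it vanishes after tensoring with both $\eSpnu$ and $\eSunu$. The $\eSunu$-localization is clean: $D_\pi$ is a unit in the DVR $\eSunu$ because it is distinguished, so $\M_0^u \otimes \eSunu = M_u \eSunu^d$ coincides by flatness with $\M \otimes \eSunu = \M_u \otimes_{\mSunu} \eSunu$. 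The $\eSpnu$-localization is the hard part, because $D_u$ need not be a unit in $\eSpnu$: by Proposition \ref{sec3.1:prop2} the $\mSunu$-Hermite form of $M_u$ has determinant a pure $\pi$-power, but the rescalings by powers of $u^\alpha/\pi^\beta$ used to bring the entries of $M_u$ into $\mSnu$ may introduce an extra $u$-factor. The plan is to show that this residual $u$-factor is precisely absorbed by the generators of $\M_0^u$ after inverting $\pi$, that is, $D_u M_\pi \eSpnu^d + D_\pi M_u \eSpnu^d = M_\pi \eSpnu^d$; this will be verified by a direct Cramer-type computation comparing how $D_u M_u^{-1}$ and $D_\pi M_\pi^{-1}$ both express the identity on $\mE^d$. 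Once both localizations vanish, $Q$ is finite, $\iota$ is a quasi-isomorphism, and the identity of maximal modules follows.
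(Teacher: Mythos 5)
Your overall architecture is a legitimate variant of the paper's: the paper proves the hard inclusion by taking an arbitrary $x \in \Max((\M_u\cap\M_\pi)\otimes_{\mSnu}\eSnu)$ and exhibiting powers of $\pi$ and of $u/\Rpi^\beta$ that push $x$ into $\M_0^\pi+\M_0^u$ (so as to invoke Proposition \ref{prop:qis} directly), whereas you propose to show once and for all that the cokernel of $\M_0^\pi+\M_0^u\hookrightarrow \M\otimes_{\mSnu}\eSnu$ is finite by checking it dies after tensoring with $\eSpnu$ and with $\eSunu$. Your forward inclusion via the adjugate is the same as the paper's, and your $\eSunu$-localisation (using that $D_\pi$ is a unit in $\eSunu$ because it is distinguished) is correct and mirrors the role $D_\pi$ plays in the paper's argument.

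The problem is the $\eSpnu$-localisation, which you rightly single out as the crux but do not prove: ``this will be verified by a direct Cramer-type computation'' is a plan, not an argument, and in the very case you are worried about that computation cannot succeed. Write $N = D_\pi M_\pi^{-1}M_u$, so that your claimed identity $D_u M_\pi\eSpnu^d + D_\pi M_u\eSpnu^d = M_\pi\eSpnu^d$ becomes $D_u\eSpnu^d + N\eSpnu^d=\eSpnu^d$. But $\det N = D_\pi^{\,d}\det(M_u)/\det(M_\pi)$ is, up to a unit of $\eSpnu$, still divisible by $D_u$; hence every maximal minor of the $d\times 2d$ matrix $(D_u I \mid N)$ is divisible by any irreducible factor of $D_u$ of positive Weierstrass degree, and over the principal ring $\eSpnu$ this forces $D_u\eSpnu^d+N\eSpnu^d\subsetneq\eSpnu^d$ whenever $\deg_W(D_u)>0$. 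In other words, the ``residual $u$-factor'' is \emph{not} absorbed by $\M_0^u$; if it were genuinely present, the cokernel of $\iota$ would contain a nonzero finite-dimensional $K'$-vector space killed by a power of $u$ but by no power of $\pi$, hence would have infinite length and the lemma itself would fail. The only way to close the argument is the route the paper takes: establish (rather than just assert) that the normalisation of $M_u$ can be chosen so that $D_u$ is a power of $\pi$ up to a unit of $\mSnu$, i.e.\ $\deg_W(D_u)=0$, after which $D_u$ is a unit in $\eSpnu$ and the $\eSpnu$-localisation is as immediate as your $\eSunu$ one. So the missing step is not a Cramer computation but a statement about the choice of generators of $\M_u$ inside $\mSnu^d$, and as it stands your proof has a genuine gap there.
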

\begin{proof}
  Using the formula $\mathrm{adj}(M)=\det(M).M^{-1}$, 
it is clear that the column vectors of the matrix $D_u M_\pi$ (resp.
$D_\pi M_u$) belong to the $\eSunu$-module
generated by the column vectors of $M_u$ (resp.  the $\eSpnu$-module
generated by the column vectors of $M_\pi$). As a consequence, we have 
$\M_0^\pi \subset
(\M_u \cap \M_\pi) \otimes_{\mSnu} \eSnu$ and $\M_0^u \subset (\M_u
\cap \M_\pi) \otimes_{\mSnu} \eSnu$.
We deduce that $\M_0^\pi + \M_0^u \subset
(\M_u \cap \M_\pi) \otimes_{\mSnu} \eSnu$. Thus, we have $\Max((\M_u
\cap \M_\pi) \otimes_{\mSnu} \eSnu) \supset \Max((\M_0^\pi + \M_0^u)
\otimes_{\mSnu} \eSnu).$

Next, suppose that $x \in \Max((\M_u \cap \M_\pi) \otimes_{\mSnu}
\eSnu)$. By Proposition \ref{prop:qis}, it means that there exists $n
\in \N$ such that $\pi^n\cdot x \in (\M_u \cap \M_\pi)\otimes_{\mSnu}
\eSnu$ and $(u/\Rpi^\beta)^n\cdot x \in (\M_u\cap \M_\pi)
\otimes_{\mSnu} \eSnu$. Note that $D_u$ is a power of $\pi$, as a
consequence there exists $n_0 > n$ such that 
\begin{equation}\label{lemma3.35:eq1}
\pi^{n_0} \cdot x \in
\M_0^\pi \subset \M_0^\pi + \M_0^u.  
\end{equation}

We would like to prove that there exists $n_1\in \N$
such that $(u/\Rpi^\beta)^{n_1} x \in \M_0^\pi + \M_0^u$. For this, it
suffices to prove that $(u/\Rpi^\beta)^{n_1} x \mod \M_0^\pi \in
\M_0^u / (\M_0^\pi \cap \M_0^u) \subset \eSnu/\M_0^\pi$.
As $D_\pi$ is invertible in
$\eSunu$ (remember that $v_\nu(D_\pi)=0$) there exists $t \in \eSnu$
and $n_2 \in \N$ 
such that $t.D_\pi = (u/\Rpi^\beta)^{n_2} \mod \pi^{n_0}\eSnu$.
Denote by $f_1, \ldots, f_k$ the vectors whose coordinates in the
canonical basis of $(\eSnu)^d$ are given by the column vectors of
$\M_0^u$.
Now, as $(u/\Rpi^\beta)^n\cdot x \in
\M_u$ there exist $\lambda_i \in \eSunu$, for $i=1,\ldots, k$, such that 
$$(u/\Rpi^\beta)^{n}.x=\sum_{i=1}^k \lambda_i f_i.$$
But we have $(u/\Rpi^\beta)^{n}.x \in \M_\pi$ 
so that $(u/\Rpi^\beta)^{n}.x \in (\eSnu)^d$
and using the triangular form of the matrix $M_u$ (see Proposition
\ref{sec3.1:prop2})
we have that $\lambda_i \in \eSnu$ for $i=1, \ldots, k$.
By multiplying the preceding equation by $t.D_\pi$, we obtain:
$$(u/\Rpi^\beta)^{n+n_2}.x+\lambda (u/\Rpi^\beta)^n  \pi^{n_0} \cdot  x = \sum_{i=1}^k
(t.\lambda_i) (D_\pi f_i),$$
for $\lambda \in \eSnu$.
Recall that we have seen that $\pi^{n_0}\cdot x \in \M_0^\pi$, thus
$(u/\Rpi^\beta)^{n+n_2}\cdot x \mod \M_0^\pi \in \M_0^u/(\M_0^\pi \cap \M^\pi_0)$.
As a consequence by taking $n_1=n+n_2$, we have:
\begin{equation}\label{lemma3.35:eq2}
(u/\Rpi^\beta)^{n_1}\cdot x \in 
\M_0^\pi +
\M_0^u
\end{equation}

By (\ref{lemma3.35:eq1}) and (\ref{lemma3.35:eq2}), there exists a $m
\in \N$ such that $\pi^m\cdot x \in
\M_0^\pi + \M_0^u$ and $(u/\Rpi^\beta)^{m}\cdot x \in \M_0^\pi +
\M_0^u$. By applying Proposition \ref{prop:qis}, we deduce that
$x \in \Max((\M_0^\pi + \M_0^u) \otimes_{\mSnu} \eSnu)$ and we are
done.
\end{proof}

\begin{remark}
In the preceding construction, we need the extension $\eSnu$ of
$\mSnu$ just to ensure that $v_\nu(D_\pi)=0$. Thus, if
$v_\nu(\det(M_\pi)) \in \Z$, this extension is not necessary.
\end{remark}

Now, let $\M \in \Max^d_{\mSnu}$ be represented by a couple
$(M_\pi,M_u)$. As $M_\pi$ and $M_u$ are given in Hermite Normal Form,
we can easily compute $D_\pi$ and $D_u$. Let $M'_\pi=D_u M_\pi$ and
$M'_u=D_\pi M_u$. Lemma \ref{sec3.4:lastlemma} tells us that we can
then obtain the $(M,L)$-representation of $\M$ by calling the
MatrixReduction algorithm on the matrix $(M'_\pi M'_u)$.

The main advantage of the $(M_\pi,M_u)$-representation is that is
provides unique representation of maximal modules over $\mSnu$,
because of the same property for Hermite Normal Forms. Thus, it allows
to test equality between modules. We have seen also that the echelon
form is well suited to test whether $x \in \mSnu^d$ is an element of $\M
\in \Max_{\mSnu}^d$ as well as to computation the intersection of two
modules. On the other side the $(M,L)$-representation provides an
actual basis of module in $\Max_{\mSnu}^d$. Moreover, the base change
operation $\otimes_{\mSnu} \mSnup$ only makes sense in the
$(M,L)$-representation and we will see in \S \ref{sec:precision}, an
important application of this operation. Indeed, if $\nu' \geq \nu$,
altough there is a natural inclusion morphism $\mSnu \subset \mSnup$,
the two sub-rings of $\mE$, $\mSunu$ and $\mSunup$ are not comparable
by the inclusion relation.

\section{Representation and precision}\label{sec:precision}

In the previous sections, we have presented algorithms to compute with 
$\mSnu$-modules by using, as a black-box, the ring operations of 
$\mSnu$. As elements of $\mSnu$ can not be coded with a finite data 
structure, these procedures are not algorithms \emph{stricto sensus} 
since they can not be implemented on a Turing machine for instance. In 
order to turn them into algorithms, we have to explain how to represent 
mathematical objects by finite data structures. Much in the same way as 
we compute with approximations of real numbers, we can represent power 
series with coefficients $\Ri$ by truncating them up to a certain 
precision. Then we have to ensure the stability of the computations, 
\emph{i.e.} that the result is independent of the part of the input that 
we ignore. In the following, we proceed in an incremental manner. First, 
we explain how to represent the elements of the coefficient ring $\Ri$ 
of $\mSnu$ by a finite structure, then we deal with elements of $\mSnu$ 
and finally with more complex structures with coefficients in $\mSnu$ 
such as $\mSnu$-modules.

\subsection{Generality with precision}

We recall from the introduction that $\Ri$ is a complete discrete
valuation ring, and that for algorithmic applications we are mostly
interested by: 
\begin{itemize}
  \item $\Z_p$ or more generally the ring a integer
of a finite extension of $\Q_p$, 
\item the ring $k[[X]]$ of formal power
series with coefficients in a (finite) field $k$. 
\end{itemize}
In any case, if
$\pi$ denote the uniformizer element of $\Ri$ and $p_\pi$ is a
positive integer, we shall represent an element of $\Ri$ by its image
in the quotient $\Ri/\pi^{p_\pi} \Ri$. 
We suppose that there exists algorithms to compute the arithmetic
operations of the ring
$\Ri/\pi^{p_\pi} \Ri$.
We say that an element
$\overline{x} \in \Ri / \pi^{p_\pi} \Ri$ is the data of element of $x
\in \Ri$ up to $\pi$-adic precision $p_\pi$ if $x \mod \pi^{p_\pi} =
\overline{x}$.

For the complexity analysis, we shall assume that we have efficient
algorithms to perform all standard operations in quotients
$\Ri/\pi^{p_\pi} \Ri$ for all integers $p_\pi$. We discuss briefly  the
validity of this assumption for the aforementioned classical examples
of rings $\Ri$.  In the case that $\Ri=k[[X]]$, we suppose that the
operations in the field $k$ costs one unit of time and can be
represented by one unit of memory. With that in mind,
if $\Ri=k[[X]]$ there exists a trivial algorithm to perform additions.
It is optimal in the sense that its complexity is equal to the size of
the inputs.  The same thing is true if $\Ri$ is the ring of integers
of any finite extension of $\Q_p$. Things are more complicated for the
multiplication of two elements of $\Ri/\pi^{p_\pi} \Ri$, whose time
will be denoted by $T_0(p_\pi)$ in the rest of this paper. In the case
$\Ri=\Z_p$, using Strassen algorithm \cite{MR2001757}, we have
$T(p_\pi)=\sO(p_\pi)$ where the soft-O notation means that we neglect
logarithmic factors. If $\Ri$ is the ring of integer of a degree $d$
finite extension of $\Q_p$, we can represent elements of $\Ri$ with a
degree $d-1$ polynomial with coefficients in $\Z_p$ and using again
Strassen algorithm for polynomials, we have $T_0(p_\pi)=\sO(d\cdot
p_\pi)$.  If $\Ri=k[[X]]$ using again Strassen algorithm for
polynomials, we have $T(p_\pi)=\sO(p_\pi)$ (we suppose here that
operation in $k$ costs one unit of time). We can summarize these
results by saying that with the best known algorithms, the time
$T_0(p_\pi)$ is quasi-linear $\log(|\Ri / \pi^{p_\pi} \Ri|)$.

An obvious way to obtain a finite approximation of an element of $\sum
a_i u^i \in \mSnu$ is to consider a representative modulo a certain
power $p_u$ of $u$. We, thus obtain a degree $p_u-1$ polynomial with
coefficients in $\Ri$ that we can represent by a vector of dimension
$p_u$ with coefficients in $\Ri$ up to precision $p_\pi$ as before.
We call this representation the \emph{flat approximation} of an
element of $\mSnu$ with $u$-adic precision $p_u$ and $\pi$-adic
precision $p_\pi$ or the $(p_u,p_\pi)$-flat approximation. The data of
a representative with $\pi$-adic precision $p_\pi$ and $u$-adic
precision $p_u$ of an element $x =\sum a_i u^i /\pi^{\lceil i\nu
\rceil} \in \mSnu$ is given by a polynomial $\sum_{i=0}^{p_u}
\overline{a}_i u^i/\pi^{\lceil i\nu \rceil}$ such that $\overline{a}_i
= a_i \mod \pi^{p_\pi}$.  It should be remarked however that the flat
approximation is not the only possible procedure to truncate an element
of $\mSnu$ in order to obtain a finite structure. For instance, one
can represent an element of $\mSnu$ up to a certain $u$-adic precision
$p_u$ by a polynomial $\sum_{i=0}^{p_u-1} a_i u^i$ with coefficients
in $\Ri$ of degree $p_u-1$. Such a polynomial may itself be
represented by the data of $a_i \mod \pi^{p_\pi}$ for $i=0, \ldots,
p_u-1$, as before but it is also possible to represent
$\sum_{i=0}^{p_u-1} a_i u^i$ by coefficients with different $\pi$-adic
precisions $a_i \mod \pi^{p_{\pi,i}}$. Put in another way, we want to
obtain a representative of $\sum_{i=0}^{p_u-1} a_i u^i$ modulo the
$\Ri$-module $\sum_{i=0}^{p_u-1} \pi^{p_{\pi_i}} u^i/\pi^{\lceil
i\nu \rceil}\cdot \Ri$.  We call this representation the \emph{jagged
approximation}. We can generalize even further the flat and jagged
approximations. For instance, we remark that for $f=\sum a_i u^i \in
\mSnu$ the flat and jagged approximations consist in the data of
$f^{(i)}(0)/i!$ for
$i=0, \ldots, p_u-1$ but we could also provide the data of
$f^{(i)}(x)/i!$ for any $x\in K$ in the radius of convergence of $f$.

Taking into account the previous examples, we say that a \emph{data of
precision} is given by any sub-$\Ri$-module $\Ap$ of $\mSnu$. Most of
the time, but not always, we want $\mSnu /\Ap$ to be $\Ri$-module of
finite length.  Indeed, it may happen that we compute with objects of
$\mSnu$ that can be represented exactly with a finite structure.  This
is the case for instance, if the characteristic of $\Ri$ is $0$, of any
element $\Z \subset \Ri$.  In this special case, it makes sense to
consider a data of precision $\Ap$ such that $\mSnu/\Ap$ is not of
finite length in order to take into account the fact that we know
certain elements of $\mSnu$ with "infinite precision". In general, in
order to represent an element of $\mSnu^d$ by a finite data structure,
one can consider a sub-$\Ri$-module $\Ap$ of $\mSnu^d$ such that most
of the time $\mSnu^d/\Ap$ has finite length.

Then, in order to compute a function $f: \mSnu^d \rightarrow \mSnu^d$,
we would like to replace it by its approximation $f: \mSnu^d/ \Ap
\rightarrow \mSnu^d / f(\Ap)$. This naive approach does not work in
general since, as $f$ is not always $\Ri$-linear, the image by $f$ of a
data of precision is not a data of precision. Though, it is possible to
approximate $f(\Ap)$ by the smallest possible data of precision. One
way to do this is to consider a \emph{regular data precision} which is
a data of precision that is a $\mSnu$-module. Then for $x \in
\mSnu^d$ and $h \in \Ap$, one can write the first order Taylor development of $f$ in
$x$
$$f(x+h)=f(x)+df_x(h) + O(h^2).$$
Most of the time (but not always), $df_x(\Ap)$ will be the correct
data of precision (see \cite{caruso-precision} for a full discussion
about this). Note that any flat approximation is a
regular data of precision but this is not always the case that a
jagged approximation is a regular data of precision.
The computation of the function $f$ reduces to the
computation of the function on the representative up to the given
precision and the computation of the precision of the result.  
A more general precision data is intuitively less
convenient for computations since it involves more complex data
structures. For instance, each coefficient of a polynomial
representing an element of $\mSnu$ with the jagged approximation may
have very unbalanced length so that it may be difficult to adapt
asymptotically fast arithmetic for such objects. On the other side, we
are going to see shortly that even for a very common operation in
$\mSnu$ such as the computation of the Euclidean division, one may
take advantage of the flexibility of the jagged approximation. Hence, 
the choice of a representation to compute with elements of $\mSnu$ is a 
non trivial trade off between space/time complexity on the one hand
and the quantity of precision we accept to loss on the other hand.

It is convenient to represent a jagged precision by a series. For
this, let $P_\pi=\sum_{i=0}^{\infty} a_{i}u^i/\pi^{\lfloor i\nu
\rfloor} \in \mSnu$. In the following, we denote by $\Ap(P_\pi)$ the
sub-$\Ri$-module of $\mSnu$ given by $\sum_{i=0}^{\infty} a_i
u^i/\pi^{\lfloor i\nu \rfloor} \cdot \Ri$.  Moreover, if $\Ap$ is
sub-$\Ri$-module of $\mSnu$, we denote by $\repr(\Ap) : \mSnu
\rightarrow \mSnu /\Ap$ the canonical projection of $\Ri$-modules. It
is clear that $\Ap(P_\pi)$ only depends on the valuation of the
coefficients $a_i$ of $P_\pi=\sum_{i=0}^{\infty} a_{i}u^i/\pi^{\lfloor
i\nu \rfloor} \in \mSnu$.  If $p_\pi$ is an integer, we will use the
notations $\Ap_f(p_u,p_\pi)$ for $$\Ap(\sum_{i=0}^{p_u-1} \pi^{p_\pi}
u^i/\pi^{\lfloor i\nu \rfloor}+\sum_{p_u}^{\infty} u^i/\pi^{\lfloor
i\nu \rfloor})$$ which corresponds to the
$(p_u,p_\pi)$-flat approximation.  If $\Ap'$ and $\Ap$ are two
sub-$\Ri$-modules of $\mSnu$ such that $\Ap' \subset \Ap$ then there
is a canonical projection $\mSnu / \Ap' \rightarrow \mSnu / \Ap$ that
we denote also (by abuse of notation) by $\repr(\Ap)$.  If $\lambda
\in \mSnu$, and $\Ap$ is a sub-$\Ri$-module of $\mSnu$, we denote by
$\lambda.\Ap=\{ \lambda.x, x \in \Ap \}$ the sub-$\Ri$-module of
$\mSnu$. If $\lambda$ is distinguished and $\mSnu / \Ap$ has finite
length then $\mSnu / (\lambda\cdot \Ap)$ has finite length. If $\Ap, \Ap'$
are sub-$\Ri$-modules of $\mSnu$, we denote by $\Ap\cdot \Ap'$ the
submodule generated by all products $xy$ for $(x,y) \in (\Ap \times \Ap')$. 
It is clear that if $\mSnu/\Ap$ and $\mSnu/\Ap'$ have finite length then 
$\mSnu / (\Ap\cdot \Ap')$ also have finite length.

\begin{lemma}\label{lemma:prectriv}
For all $\Ap, \Ap'$ sub-$\Ri$-modules of $\mSnu$ such that $\mSnu/\Ap$
and $\mSnu / \Ap'$ have finite length, for all $x,y \in \mSnu$ we have:
\begin{enumerate}
  \item if $\Ap' \supset \Ap$ then
    $\repr(\Ap')(\repr(\Ap)(x)) =\repr(\Ap')(x)$ ;
\item $\repr(\Ap + \Ap')(\repr(\Ap)(x))+
  \repr(\Ap + \Ap')(
  \repr(\Ap')(y))= \repr(\Ap + \Ap')(x+y)$ ;
\item let $\Ap_0= y\cdot \Ap + x\cdot \Ap'+\Ap\cdot \Ap'$, then 
$$\repr(\Ap_0)(\repr(\Ap)(x))\cdot \repr(\Ap_0)(\repr(\Ap')(y))=
\repr(\Ap_0)(x\cdot y);$$
\item if $\Ap' \supset \Ap$, then 
$\repr(\Ap')(\repr(\Ap)(x))\cdot \repr(\Ap')(y)=
\repr(\Ap')(x\cdot y)$ 
\end{enumerate}
\end{lemma}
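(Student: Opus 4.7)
The plan is to dispatch all four assertions by direct computation with representatives in $\mSnu$, leveraging the universal property of $\Ri$-linear quotients. None of the statements should require heavy machinery, but the bookkeeping of different moduli must be tracked carefully.

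For (1), since $\Ap \subset \Ap'$, the canonical surjection $\mSnu \to \mSnu/\Ap'$ factors uniquely through $\mSnu/\Ap$; by the ``abuse of notation'' recalled just above the lemma, this factorisation is precisely what is denoted $\repr(\Ap'): \mSnu/\Ap \to \mSnu/\Ap'$, and the asserted identity is the commutativity of that factorisation. Assertion (2) then reduces to applying (1) twice, with $\Ap + \Ap'$ in the role of $\Ap'$ (noting $\Ap, \Ap' \subset \Ap + \Ap'$), and invoking the $\Ri$-linearity (in particular the additivity) of $\repr(\Ap + \Ap')$.

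The substantive part is (3). I would pick lifts $\tilde x = x + a$ with $a \in \Ap$ of $\repr(\Ap)(x)$ and $\tilde y = y + b$ with $b \in \Ap'$ of $\repr(\Ap')(y)$, and compute in $\mSnu$:
$$\tilde x \,\tilde y \;=\; xy + xb + ya + ab.$$
By the definition $\Ap_0 = y\cdot\Ap + x\cdot\Ap' + \Ap\cdot\Ap'$, each of the three correction terms $xb$, $ya$, $ab$ sits inside $\Ap_0$. Therefore $\tilde x\,\tilde y \equiv xy \pmod{\Ap_0}$, which simultaneously shows that the mixed-modulus product on the left-hand side is independent of the choice of lifts (so that the notation is meaningful) and coincides with $\repr(\Ap_0)(xy)$. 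The real content of the lemma is hidden in this step: $\Ap_0$ is tailored to be the smallest precision datum that absorbs exactly the error terms coming from $\Ap$ and $\Ap'$.

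For (4), I would combine (1) with the same lifting philosophy: by (1), $\repr(\Ap')(\repr(\Ap)(x)) = \repr(\Ap')(x)$, so the left-hand side equals $\repr(\Ap')(x)\cdot\repr(\Ap')(y)$; taking the canonical lifts $x, y \in \mSnu$, multiplying in $\mSnu$ and projecting then yields $\repr(\Ap')(xy)$. The main obstacle, such as it is, is purely notational: one must keep straight that $\repr(\Ap')$ denotes simultaneously the projection out of $\mSnu$ and the induced map between quotients whenever the $\Ri$-submodules are nested, and that products across different moduli are always defined by lifting to $\mSnu$.
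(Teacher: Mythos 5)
Your proposal is correct and follows essentially the same route as the paper: claim (1) is the factorisation of quotient maps, claims (2) and (3) are the coset computations $(x+\Ap)+(y+\Ap')=x+y+(\Ap+\Ap')$ and $(x+\Ap)(y+\Ap')=xy+x\Ap'+y\Ap+\Ap\Ap'$, and claim (4) follows from (1) and (3). You merely spell out the lift-and-check details that the paper leaves implicit.
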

\begin{proof}
  The fist claim is trivial.
Then we have $(x+\Ap)+(y+\Ap')=x+y + (\Ap+\Ap')$ and 
$(x+\Ap)\cdot (y+\Ap')=x\cdot y + x\cdot \Ap'+y\cdot \Ap+\Ap\cdot \Ap'$. The fourth claim, is an
immediate consequence of 1 and 3.
\end{proof}

We discuss briefly the complexity of the elementary arithmetic
operations in $\mSnu$ with the $(p_u,p_\pi)$-flat approximation.
First, we remark that the size of an element of $\mSnu$ with the
$(p_u,p_\pi)$-flat approximation is in the order of $p_\pi\cdot p_u$. As
before, the time of an addition in $\mSnu$ is linear in the size of a
representative of $\mSnu$ since it reduces to the addition of two
polynomials of degree $p_u-1$ with coefficients in $\Ri / \pi^{p_\pi}
\Ri$. We denote by $T(p_u,p_\pi)$ the time cost of the multiplication
of two elements of $\mSnu$ with the $(p_u, p_\pi)-$flat approximation.
Again, by using a tweaked Strassen's algorithm, we have $T(p_u,
p_\pi)=\sO(p_u\cdot T(p_\pi))=\sO(p_u\cdot p_\pi)$.  In the following, we study
the precision of some important functions using the flat and jagged
approximation.

\subsection{Finite precision computation with elements of 
$\mSnu$}\label{subsec:finiteprec}

Most of the time, even for very elementary function
dealing with elements of $\mSnu$, it is not possible to ensure the
stability of the result without some extra assumption. We illustrate
this fact with some important examples.

First, consider the Gauss valuation function $v_\nu : K[[u]]
\rightarrow \Q$. A natural way
to define $v_\nu$ on a representative modulo $\Ap_f(p_u,p_\pi)$, with
$p_u,p_\pi$ positive integers, is to compute the valuation of the
truncated representative in $\mSnu$. For instance let $x=\pi + u^{10}$, then
$v_0( \repr(\Ap_f(9,2))(x))=v_0(\pi)=1$.  We denote also this function
by $v_\nu$.  But then we have $v_0( \repr(\Ap_f(9,2))(x))=1$ and
$v_0(\repr(\Ap_f(10,2))(x))=0$.  From the previous example, one can
see that the Gauss valuation of an element $x \in \mSpnu$
can not be computed in general from the knowledge of its
approximation.  Still, it is possible to obtain the Gauss valuation of
an element $x \in \mSpnu$ from the knowledge of its approximation if
we are given some extra-information about $x$. For instance, if
$v_\nu(\repr(\Ap_f(p_u,p_\pi))(x))=0$ and if we know furthermore that
$x \in \mSnu$ then we are sure that $v_\nu(x)=0$. More generally, it
may happen than we have a guaranty that $x \in 1/\pi^\lambda.\mSnu$
for a $\lambda \in \Z$.  Then, if $\nu$ is big enough, it is possible
to compute the valuation of $x$ from the knowledge of
$\repr(\Ap_f(p_u,p_\pi))(x)$.  

\begin{lemma}\label{lemma:changenu} Let $x=\sum a_i u^i \in
  1/\pi^\lambda\cdot \mSnu$ for $\lambda$ a positive integer. Let $p_u$ be
  a positive integer and $\overline{x} \in K[u]$ be the unique
  representative of $x \mod u^{p_u}$ of degree $< p_u$. We suppose that 
  $x \ne 0$ and that $d=\deg_W(x) < p_u$.
  
  Let $\nu' \in \Q$ be such that 
  \begin{equation}\label{sec4:lemma1:eq1}
  \nu'-\nu \geq  \frac{\lambda+v_\nu(x)}{p_u-d},
\end{equation}
  then $v_{\nu'}(x)=v_{\nu'}(\overline{x})$.
\end{lemma}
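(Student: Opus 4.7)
The plan is to write $x = \bar{x} + r$ where $r = \sum_{i \geq p_u} a_i u^i$, and show that $v_{\nu'}(r) > v_{\nu'}(\bar{x})$, so that the strong triangle inequality forces $v_{\nu'}(x) = v_{\nu'}(\bar{x})$. First I would note the easy but essential observation that $\nu' \geq \nu$: the hypothesis $x \in \frac{1}{\pi^\lambda}\mSnu$ means $v_K(a_i) + \nu i \geq -\lambda$ for all $i$, hence $v_\nu(x) \geq -\lambda$, so $\lambda + v_\nu(x) \geq 0$; combined with $p_u - d > 0$, this makes the right-hand side of (\ref{sec4:lemma1:eq1}) non-negative.

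Next I would establish the two key bounds. For the truncation $\bar{x}$, since $d < p_u$ the term $a_d u^d$ actually appears in $\bar{x}$, so
\[
v_{\nu'}(\bar{x}) \leq v_K(a_d) + \nu' d = v_\nu(x) + (\nu' - \nu) d.
\]
For the tail $r$, every coefficient satisfies $v_K(a_i) + \nu i \geq -\lambda$, hence for $i \geq p_u$,
\[
v_K(a_i) + \nu' i = (v_K(a_i) + \nu i) + (\nu' - \nu) i \geq -\lambda + (\nu' - \nu) p_u,
\]
using $\nu' - \nu \geq 0$ to bound $(\nu'-\nu) i \geq (\nu'-\nu) p_u$. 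Therefore $v_{\nu'}(r) \geq -\lambda + (\nu' - \nu) p_u$. The inequality (\ref{sec4:lemma1:eq1}) rewrites exactly as $v_\nu(x) + (\nu'-\nu) d \leq -\lambda + (\nu'-\nu) p_u$, which combines with the two bounds above to yield $v_{\nu'}(\bar{x}) \leq v_{\nu'}(r)$. When this inequality is strict, the standard non-archimedean argument gives $v_{\nu'}(x) = v_{\nu'}(\bar{x})$ immediately.

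The main obstacle is the equality case, which must be handled separately. Equality in $v_{\nu'}(r) \geq -\lambda + (\nu'-\nu) p_u$ forces some $i \geq p_u$ with $v_K(a_i) + \nu i = -\lambda$; in particular $v_\nu(x) \leq -\lambda$, and combined with $v_\nu(x) \geq -\lambda$ this gives $v_\nu(x) = -\lambda$. Then $\lambda + v_\nu(x) = 0$, so the hypothesis only forces $\nu' \geq \nu$; and if additionally our upper bound $v_{\nu'}(\bar x) \leq v_\nu(x) + (\nu'-\nu) d$ is itself the equality meeting $v_{\nu'}(r)$, one checks that this forces $\nu' = \nu$, at which point the claim reduces to $v_\nu(x) = v_\nu(\bar{x})$: this is immediate because the Weierstrass degree $d$, achieving the minimum $v_\nu(x)$, lies in the range $[0, p_u-1]$ of indices appearing in $\bar{x}$. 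In every other subcase the inequality $v_{\nu'}(\bar x) < v_{\nu'}(r)$ is strict and the argument concludes directly.
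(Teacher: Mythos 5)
Your proof is correct and follows the same basic strategy as the paper: split off the tail $r=\sum_{i\geq p_u}a_iu^i$, bound each tail term from below using $v_K(a_i)+\nu i\geq-\lambda$ together with $\nu'\geq\nu$, and compare with the term of index $d$, which is exactly how the hypothesis \eqref{sec4:lemma1:eq1} enters. The one place where you work harder than necessary is the equality case. Because $\overline{x}$ and $r$ have disjoint supports in $u$, the Gauss valuation of their sum is \emph{exactly} the minimum of the two valuations, $v_{\nu'}(x)=\min\bigl(v_{\nu'}(\overline{x}),v_{\nu'}(r)\bigr)$; so the non-strict inequality $v_{\nu'}(r)\geq v_{\nu'}(\overline{x})$ already forces $v_{\nu'}(x)=v_{\nu'}(\overline{x})$, and no cancellation can occur. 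The paper exploits this implicitly by arguing term by term: it suffices that $v_K(a_i)+\nu'i\geq v_K(a_d)+\nu'd$ for all $i\geq p_u$, with no need for strictness. Your treatment of the equality case (showing it forces $v_\nu(x)=-\lambda$ and then $\nu'=\nu$, where the claim is immediate from $d<p_u$) is correct, but it is solving a problem that only arises from treating $x=\overline{x}+r$ as a sum of two arbitrary elements subject to the ultrametric inequality.
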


\begin{proof} 
  Let $x=\sum a_i u^i \in 1/\pi^{\lambda}\cdot \mSnu$. By definition, we have
  $v_{\nu'}(\overline{x})\leq v_K(a_d)+\nu'\cdot d$ and on the other
  side we have of course $v_{\nu'}(x) \leq v_{\nu'}(\overline{x})$.
  Thus, in order to prove the lemma, we just have to check that for all $i
  \geq p_u$, we have 
  \begin{equation}\label{eq:random1:1}
  v_K(a_i)+\nu'\cdot i \geq v_K(a_d)+\nu'\cdot d.
\end{equation}
But, using $x \in 1/\pi^{\lambda}\cdot  \mSnu$, we get
\begin{equation}\label{eq:random1:3}
v_K(a_i)+\nu\cdot i \geq -\lambda
\end{equation}
for all $i\geq p_u$. 
From  (\ref{eq:random1:1}) and (\ref{eq:random1:3}), we deduce that it
is enough to prove that $-\lambda + i(\nu' - \nu) \geq
 v_K(a_d)+\nu'.d$. Since $\nu'-\nu \geq 0$ by
 hypothesis, it suffices to prove that
 $-\lambda + p_u(\nu'-\nu) \geq v_k(a_d)+\nu'.d$ for all $i \geq p_u$.
 This is
 equivalent to 
 \begin{equation}\label{eq:random1:2}
   \nu' \geq \frac{v_K(a_d)+p_u\cdot \nu +\lambda}{p_u-d},
 \end{equation}
 which is exactly (\ref{sec4:lemma1:eq1}).
\end{proof}

This lemma, while totally elementary, shows the following very important 
fact: by increasing the $\nu$ parameter of the $\mSnu$-module, one can 
obtain guaranties on the valuation of a certain $x=\sum a_i u^i \in 
\mSnu$ from the knowledge of its representative $x=\sum_{i=1}^{p_u-1} 
a_i u^i$ with bounded Weierstrass degree under the general hypothesis of 
a lower bound on the valuation of the coefficients $a_i$.

Another important operation for the arithmetic of $\mSnu$ is the inversion.
\begin{lemma}\label{lemma:divprec}
 Let $x \in \mSnu$ and suppose that $\deg_W(x)=0$ and that $v_\nu(x)
=0$ so that by Corollary \ref{sec2:cor1}, $x$ is invertible.
Let $p_u,p_\pi$ be positive integers. Then
$\repr(\Ap_f(p_u,p_\pi))(x) \in \mSnu / \Ap_f(p_u, p_\pi)$ is also invertible and we have
$\repr(\Ap_f(p_u,p_\pi))(x)^{-1}=\repr(\Ap_f(p_u,p_\pi))(x^{-1})$.
\end{lemma}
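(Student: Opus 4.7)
The plan is to establish the identity by reducing it to ring-theoretic manipulations in the quotient $\mSnu/\Ap$, where $\Ap := \Ap_f(p_u,p_\pi)$. The crucial observation is that, although $\Ap$ is defined only as an $\Ri$-submodule of $\mSnu$, it is in fact an ideal of $\mSnu$: once this is known, the quotient inherits a ring structure and the identity $x\cdot x^{-1}=1$ (which holds in $\mSnu$ thanks to Corollary \ref{sec2:cor1}) projects cleanly, yielding both the invertibility of $\repr(\Ap)(x)$ and the claimed formula for its inverse.

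First I would unfold the definition of $\Ap$ as the $\Ri$-module generated by $\pi^{p_\pi-\lfloor i\nu\rfloor}u^{i}$ for $i<p_u$ and by $\pi^{-\lfloor i\nu\rfloor}u^{i}$ for $i\geq p_u$. Since these generators involve distinct powers of $u$ and the tail condition $v_K(c_i)+\lfloor i\nu\rfloor \geq 0$ is automatic for any element of $\mSnu$, one gets the explicit characterisation
$$\Ap \;=\; \Bigl\{\,f=\sum_{i\geq 0} c_i u^i \in \mSnu \;\Big|\; v_K(c_i) + \lfloor i\nu\rfloor \geq p_\pi \text{ for all } i<p_u \,\Bigr\}.$$
To show $\Ap$ is an ideal, pick $z=\sum a_k u^k \in \mSnu$ and $f=\sum b_j u^j \in \Ap$ and bound the coefficient $c_i=\sum_{k+j=i}a_k b_j$ of $u^i$ in $zf$. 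For $i<p_u$ and any splitting $i=k+j$ (so $j<p_u$), the bounds $v_K(a_k)\geq -\lfloor k\nu\rfloor$, $v_K(b_j)\geq p_\pi-\lfloor j\nu\rfloor$ together with the elementary inequality $\lfloor(k+j)\nu\rfloor \geq \lfloor k\nu\rfloor + \lfloor j\nu\rfloor$ give $v_K(c_i)+\lfloor i\nu\rfloor \geq p_\pi$, so $zf\in\Ap$.

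Granting that $\Ap$ is an ideal, the projection $\repr(\Ap)\colon \mSnu \to \mSnu/\Ap$ is a ring homomorphism, and applying it to $x\cdot x^{-1}=1$ (an identity that makes sense in $\mSnu$ because $x^{-1}\in\mSnu$ by Corollary \ref{sec2:cor1}) yields $\repr(\Ap)(x)\cdot\repr(\Ap)(x^{-1}) = 1$, which is exactly the conclusion. Equivalently, if one prefers to stay within the framework of Lemma \ref{lemma:prectriv}, applying item (3) with $\Ap'=\Ap$ and $y=x^{-1}$ gives the identity in $\mSnu/\Ap_0$ with $\Ap_0 = x^{-1}\Ap + x\Ap + \Ap\cdot\Ap$; the ideal property forces $\Ap_0 \subseteq \Ap$, after which item (1) pushes the identity down to $\mSnu/\Ap$.

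The only non-formal point is the verification that $\Ap$ is stable under multiplication by $\mSnu$, which rests entirely on the subadditivity of the floor function; everything else is manipulation of the representation morphisms via Lemma \ref{lemma:prectriv}. I do not foresee a real obstacle.
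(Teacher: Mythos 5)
Your proof is correct, but it takes a genuinely different route from the paper's. The paper argues coefficient by coefficient: writing $x=\sum a_i u^i/\pi^{\lfloor i\nu\rfloor}$ and $x^{-1}=\sum b_i u^i/\pi^{\lfloor i\nu\rfloor}$, it derives a recursion expressing $b_j \bmod \pi^{p_\pi}$ in terms of the $a_i \bmod \pi^{p_\pi}$ and the previously computed $b_i \bmod \pi^{p_\pi}$, the key arithmetic point being that $\lfloor j\nu\rfloor - \lfloor i\nu\rfloor - \lfloor (j-i)\nu\rfloor \in \{0,1\}$; this yields an explicit algorithm computing $\repr(\Ap_f(p_u,p_\pi))(x^{-1})$ from $\repr(\Ap_f(p_u,p_\pi))(x)$, which is the form in which the lemma is later invoked in Proposition \ref{sec4:prop1}. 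You instead prove the structural statement that $\Ap_f(p_u,p_\pi)$ is an ideal of $\mSnu$ --- resting on the same fact about floors, namely the superadditivity $\lfloor(k+j)\nu\rfloor \geq \lfloor k\nu\rfloor+\lfloor j\nu\rfloor$ (you call it subadditivity at the end, but the inequality you actually write and use is the correct one), combined with the integrality of $v_K$ on coefficients of elements of $\mSnu$ --- so that $\repr(\Ap_f(p_u,p_\pi))$ is a ring homomorphism and the identity $x\cdot x^{-1}=1$ projects directly. Your version is cleaner and proves slightly more: every ring identity descends to the quotient, and uniqueness of inverses in a ring shows that the truncated inverse depends only on the truncated input, which is the stability assertion really at stake. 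What it does not supply directly is the effective recursion for computing that inverse; this is easily recovered from your ideal property (by the paper's recursion, or by Newton iteration as in Algorithm \ref{sec2:algo1}), so it is a loss of constructiveness rather than a gap in the proof of the statement as written.
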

\begin{proof}

 Write $x=\sum a_i u^i/\pi^{\lfloor i\nu \rfloor}$, $x^{-1}=\sum b_i
 u^i/\pi^{\lfloor i\nu \rfloor}$ and $c=1 = \sum c_i u^i/\pi^{\lfloor
 i\nu \rfloor}$ with $c_j =\sum_{i=0}^j a_i\cdot b_{j-i}$. We have
 $v_K(a_0)=0$ so that we can compute ${a_0}^{-1} \mod p_\pi=b_0 \mod
 p_\pi$. Then, using the formula 
 $$\frac{b_j}{\pi^{\lfloor j\nu \rfloor}} =
 \frac{1}{a_0}\cdot \sum_{i=0}^{j-1} \frac{a_i b_{j-i}}{\pi^{\lfloor i\nu
 \rfloor}\pi^{\lfloor (j-i)\nu \rfloor}},$$ 
 together with the remark that $\pi^{\lfloor j\nu \rfloor}/(\pi^{\lfloor i\nu
 \rfloor}\pi^{\lfloor (j-i)\nu \rfloor})$ is equal to $1$ or $\pi$,
 we obtain by induction for $j=1, \ldots,
 p_u-1$, $b_j \mod p_\pi$.
\end{proof}

Let $x,y \in \mSnu$.  In order to be able to compute an approximation
of the Euclidean division of $y$ by $x$ , it is necessary to know that
$v_\nu(y) \geq v_\nu(x)$. One way to have that guaranty is to be given
$x$ with enough precision to know that it is distinguished. Then one
can compute its Weierstrass degree.  In the following proposition, we
keep the notations of Proposition \ref{sec2:prop1}.

\begin{proposition}\label{sec4:prop1}
  Let $x,y \in \mSnu$. Suppose that $x$ is distinguished and let $d =
  \deg_W(x)$. Let $q \in \mSnu$ and $r \in K[u] \cap \mSnu$ be such that
  $\deg(r) < d$ and $y=q\cdot x+r$.
  Put $e=v_\nu(\Lo(x))>0$,
  let $p_\pi$ be a positive integer and 
  $p_x= \lceil p_\pi /e
  \rceil d$. Let $$P_y=\sum_{i=0}^{p_x-1} \pi^{\max\{p_\pi -
  \lfloor i /d \rfloor e-\lfloor i\nu \rfloor, -\lfloor i\nu \rfloor\}
} u^i+\sum_{i=p_x}^{\infty} \frac{u^i}{\lfloor i\nu \rfloor},$$ 
  $$P_q=\sum_{i=0}^{p_x-d-1} \pi^{\max\{p_\pi -
  \lfloor i /d+1 \rfloor e-\lfloor i\nu \rfloor, -\lfloor i\nu \rfloor\}
} u^i+\sum_{i=p_x-d}^{\infty} \frac{u^i}{\lfloor i\nu \rfloor}.$$
  There exists an algorithm which takes as input
  $\repr(\Ap_f(p_x,p_\pi))(x)$ and $\repr(\Ap(P_y))(y)$ and
  outputs
  $\repr(\Ap_f(p_x,p_\pi))(q)$ and $\repr(\Ap_f(\infty,p_\pi))(r)$.
\end{proposition}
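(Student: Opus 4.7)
My plan is to execute Algorithm \ref{algo:euclidean} for exactly $N = \lceil p_\pi / e \rceil$ iterations (so $p_x = Nd$) and to check that each step can be performed using only the truncated inputs. The correctness of the underlying exact procedure is already covered by Proposition \ref{sec2:prop1}: after $N$ iterations one has $v_\nu(q - q_N) \geq Ne \geq p_\pi$ and likewise for $r_N$, so $q_N$ and $r_N$ agree with $q$ and $r$ modulo $\pi^{p_\pi}$. The content of this proposition is therefore a precision analysis showing that the loop of Algorithm \ref{algo:euclidean} is stable in the sense of \S \ref{subsec:finiteprec}: starting from the jagged approximation of $y$ and the flat approximation of $x$, we obtain approximate values $\tilde q_N$ and $\tilde r_N$ that agree with the exact $q_N, r_N$ to the precision asserted in the output.

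The heart of the argument is an induction invariant. At the start of iteration $i \in \{0, \ldots, N\}$, we maintain an approximation $\tilde r_i$ of $r_i$ in $\mSnu / \Ap(P^{(i)})$, where $P^{(i)}$ is obtained from $P_y$ by moving the effective cutoff from degree $p_x$ down to $p_x - id$ and by replacing $\lfloor j/d \rfloor$ with $\lfloor j/d \rfloor + i$ in the inner $\max$; so $P^{(0)} = P_y$, and $P^{(N)}$ has uniform $\pi^{p_\pi}$-precision on the whole range $[0, d-1]$. For the inductive step we follow the body of Algorithm \ref{algo:euclidean}: invert $\Hi(x, d)/u^d$ modulo $\pi^{p_\pi}$ by Lemma \ref{lemma:divprec}, multiply by $\Hi(\tilde r_i, d)$ to obtain $\tilde h \approx \Hi(r_i, d)/\Hi(x, d)$, set $\tilde q_{i+1} = \tilde q_i + \tilde h$, and form $\tilde r_{i+1} = \Lo(\tilde r_i, d) - \tilde h \cdot \Lo(x, d)$ according to (\ref{sec2:prop1:eq1}). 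Because $\Hi(\tilde r_i, d)$ starts at degree $d$ and $\Hi(x, d)/u^d$ is a unit of Gauss valuation zero, the precision pattern of $\tilde h$ is that of $\Hi(\tilde r_i, d)$ shifted down by $d$ in its index. Multiplying by $\Lo(x, d)$, which is known with $(p_x, p_\pi)$-flat precision and has Gauss valuation $e > 0$, increases the $\pi$-adic depth of the precision loss by $e$ via Lemma \ref{lemma:prectriv}(3); adding to $\Lo(\tilde r_i, d)$ then produces an approximation with exactly the jagged precision $P^{(i+1)}$. At $i = N$ the invariant delivers $\repr(\Ap_f(\infty, p_\pi))(r)$, and the accumulator $\tilde q_N$ lives in the jagged precision $\Ap(P_q)$, which in particular matches the flat output precision $\Ap_f(p_x, p_\pi)$ claimed for $q$.

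The main obstacle is the bookkeeping inside the multiplication $\tilde h \cdot \Lo(x, d)$, which couples every degree of $\tilde h$ with every degree of $\Lo(x, d)$: one must verify degree by degree that the jagged precision predicted by Lemma \ref{lemma:prectriv}(3) really coincides with $P^{(i+1)}$, both in the interior (where $p_\pi - (\lfloor j/d \rfloor + i + 1)\,e - \lfloor j\nu \rfloor$ is strictly positive) and at the saturation boundary (where the $\max\{\cdot, -\lfloor j\nu \rfloor\}$ clips the prescribed precision to the ambient precision of $\mSnu$ at degree $j$). A secondary subtlety is the non-integer part of $\nu$: the index shift by $d$ can alter $\lfloor j\nu \rfloor$ by $\lceil d\nu \rceil - \lfloor d\nu \rfloor$, and this rounding slack must be absorbed into the $e$-loss, which is possible because $e$ is bounded below by the Weierstrass structure of $\Lo(x, d)$ guaranteed by Corollary \ref{cor:weierstrass}.
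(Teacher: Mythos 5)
Your overall strategy is exactly the paper's: run Algorithm \ref{algo:euclidean} for $N=\lceil p_\pi/e\rceil$ iterations, maintain by induction a jagged precision datum for the current remainder, invert the unit part of $\Hi(x,d)$ via Lemma \ref{lemma:divprec}, and propagate precision through the products via Lemma \ref{lemma:prectriv}. The problem is that the invariant you state is not the right one and is inconsistent with your own endpoint claim. Replacing $\lfloor j/d\rfloor$ by $\lfloor j/d\rfloor+i$ in the exponent makes the relative precision on degrees $j\in[0,d)$ equal to $p_\pi-ie$, which at $i=N$ is $p_\pi-Ne\leq 0$; this contradicts your assertion that $P^{(N)}$ carries uniform $\pi^{p_\pi}$-precision on $[0,d-1]$, and with that invariant the loop would output $r$ with no usable precision. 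The invariant that actually closes (the paper's $P_{y,j}$) leaves the interior staircase $p_\pi-\lfloor j/d\rfloor e-\lfloor j\nu\rfloor$ \emph{unchanged} on $[0,(N-i)d)$ and only modifies the tail beyond the shrinking cutoff, where it records relative precision $ie$ --- reflecting the bound $v_\nu(\Hi(r_i,d))\geq ie+v_\nu(\Hi(y,d))$ from Proposition \ref{sec2:prop1}, i.e.\ the tail is simply known to be small. Your own heuristic for one step (the shift by $d$ in the division costs one rung of the staircase, namely $e$, and the multiplication by $\Lo(x,d)$, of valuation $e$, gains it back) in fact shows that the interior profile is preserved, not degraded, so the fix is to restate the invariant accordingly.

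The second, more substantial issue is that you explicitly defer the degree-by-degree verification that $\Lo(\tilde r_i,d)-\tilde h\cdot\Lo(x,d)$ has precision $P^{(i+1)}$, calling it ``the main obstacle.'' That verification \emph{is} the content of the proposition: the paper's proof consists precisely of writing down the intermediate precision datum for $\Hi(r_j,d)/\Hi(x,d)$ and checking, through Lemma \ref{lemma:prectriv}, the two congruences that constitute one loop iteration. As written, your argument identifies the correct plan and the correct auxiliary lemmas, but it neither states the invariant that makes the induction close nor performs the computation that would prove it.
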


%\begin{figure}[h]
%\begin{center}
%\begin{tikzpicture}
%%\draw[step=0.5cm,lightgray, very thin] (0,-1) grid (3,2);
%\draw[->, thick] (0,0) -- (0,2.5);
%\draw[thick] (0,0) -- (5,0);
%\draw[dashed,thick] (5,0) -- (7,0);
%\draw[->, thick] (7,0) -- (11,0);
%\draw[color=blue] (0,2) -- (2,2) -- (2,1.75) -- (4,1.75) -- (4,1.5) --
%(5,1.5);
%\draw[color=blue, dashed] (5,1.5) -- (5.5,1.5);
%\draw[color=blue] (10,0) -- (10,0.25) -- (8,0.25) -- (8,0.25) --
%(8,0.5) -- (7,0.5);
%\draw[color=blue, dashed] (7,0.5) -- (6.5,0.5);
%\draw[thick] (2,0) -- (2,0.1);
%\node[above] (d) at (2,0) {d};
%\node[right] (p) at (0,2) {p};
%\node[above] (u) at (11,0) {u};
%\end{tikzpicture}
%\end{center}
%\caption{The form of the precision of $y$ in the Euclidean division
%for $\nu=0$.}
%\end{figure}

\begin{figure}[h]
\begin{center}
\begin{tikzpicture}
\draw[step=0.5cm,lightgray, very thin] (0,-1) grid (12,2.5);
\draw[->, thick] (0,0) -- (0,2.5);
\draw[->, thick] (0,0) -- (12,0);
\draw[color=blue] (0,2) -- (2,1.8333) -- (2,1.5) -- (4,1.3333) --
(4,1) -- (6,0.83333) -- (6,0.5) -- (8,0.3333) -- (8,0) -- (10,
-0.1666) -- (10,-0.5) -- (12,-0.6666) -- (12,-1) ;
\draw[color=red] (0,0) -- (12,-1) node[anchor=north] {$y=-\nu x$};
\draw[thick] (2,0) -- (2,0.1);
\node[above] (d) at (2,0) {$d$};
\node[right] (p) at (0,2) {$\pi$};
\node[above] (u) at (12,0) {$u$};
\end{tikzpicture}
\end{center}
\caption{The form of the precision of $y$ in the Euclidean division
for $d=2$ and $\nu=1/6$.}
\end{figure}
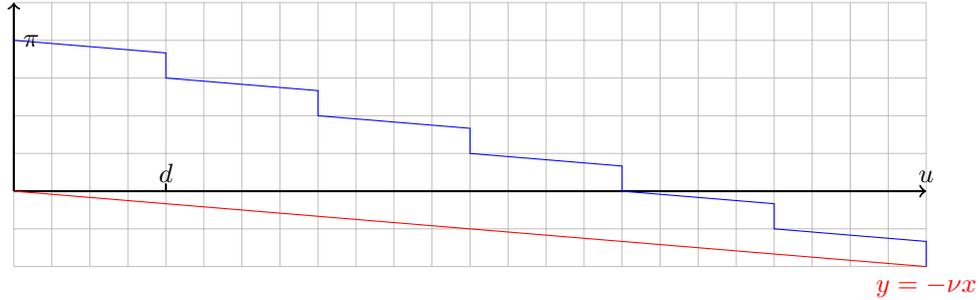

\begin{proof}
  Recall that, from Proposition \ref{sec2:prop1}, $q,r$ are the limits of
  the sequences $(q_j,r_j)$ defined by $q_0=0$ and $r_0=y$ and 
$$q_{j+1}=q_j + \frac{\Hi(r_j,d)}{\Hi(x,d)},$$
$$r_{j+1}=\Lo(r_j) - \frac{\Hi(r_j,d)}{\Hi(x,d)}\cdot \Lo(x).$$
For $j=0, \ldots, \lceil p_\pi/e \rceil$, let 
$$P_{y,j}=\sum_{i=0}^{ (\lceil p_\pi/e \rceil -j) \cdot d-1}
\pi^{\max\{p_\pi - \lfloor i /d \rfloor e -\lfloor i\nu \rfloor,
-\lfloor i\nu \rfloor\}}u^i+\sum_{i=(\lceil p_\pi/e \rceil -j)
\cdot d}^{\infty}
 \pi^{\max\{j\cdot e -\lfloor i\nu \rfloor,
-\lfloor i\nu \rfloor\}}u^i ,$$ 
and let 
 $t(j)=\repr(\Ap(P_{y,j}))(r_j)$. 
 It is clear that $t(0)=\repr(\Ap(P_y))(y)$.
 We are going to prove that if
 we know $\repr(\Ap_f(p_x,p_\pi))(x)$ and $t(j)$ then we can compute
 $t(j+1)$.  Write $\Hi(x,d)=u^d/\pi^{\nu d}\cdot x_0$, with $x_0$ an invertible element
 of $\mSnu$. Then from $\repr(\Ap_f(p_x,p_\pi))(\Hi(x,d))$, 
 we immediately obtain $\repr(\Ap_f(p_x-d,p_\pi))(\Hi(x_0,d))$,
and 
 by Lemma
 \ref{lemma:divprec}, we can compute
 $\repr(\Ap_f(p_x-d,p_\pi))(1/(\Hi(x_0,d)))$. As $\Ap_f(p_x-d,p_\pi) \subset \Ap(p_x-d\cdot j,P_{y,j})$,
 applying Lemma
 \ref{lemma:prectriv}, we deduce that 
 \begin{multline}\label{div:eq1}
 \repr(\Ap(P_{y,j}))(\Hi(r_j,d)/\Hi(x_0,d))= \\
\repr(\Ap(P_{y,j})(\repr(\Ap_f(p_x-d,p_\pi))(1/(\Hi(x_0,d)))).
\repr(\Ap(P_{y,j})(\Hi(r_j,d)).
\end{multline}
We remark that $\repr(\Ap(P_{y,j})(\Hi(r_j,d))=\Hi(t(j),d)$ so
that the left hand side of (\ref{div:eq1}) can be computed from the
known data.
Dividing $\repr(\Ap(P_{y,j}))(\Hi(r_j,d)/\Hi(x_0,d))$ by
$u^d/\pi^{\lfloor d\nu \rfloor}$, we obtain
 $\repr(\Ap(P_j))(\Hi(r_j,d)/\Hi(x,d))$, where
 \begin{multline}
 P_j=\sum_{i=0}^{
(\lceil p_\pi/e \rceil -(j+1))
\cdot d-1} \pi^{\max \{ p_\pi -
\lfloor i /d+1 \rfloor e- \lfloor i\nu \rfloor, - \lfloor i\nu
\rfloor}u^i\\
+\sum_{i=(\lceil p_\pi/e \rceil -(j+1))
\cdot d}^{\infty}
 \pi^{\max\{(j\cdot e -\lfloor i\nu \rfloor,
-\lfloor i\nu \rfloor\}}u^i.
\end{multline}
 
 Next, as
 $v_\nu(\Lo(x))=e$, still be applying Lemma \ref{lemma:prectriv}, and
 remarking that $t(j+1)=\pi^e\cdot P_j$, we
 obtain 

\begin{multline}
  \repr(\Ap(t(j+1)))(\frac{\Hi(r_j,d)}{\Hi(x,d)}\cdot \Lo(x))= \\
 \repr(\Ap(
 t(j+1)))(\repr(\Ap(P_j))(\Hi(r_j,d)/\Hi(x,d))).
\repr(\Ap_f(\infty,
 p_\pi))(\Lo(x)).
\end{multline}

From the above, we deduce by induction that we can compute
$\repr(\Ap(P_{y,\lceil p_\pi/e \rceil}))(r)$. 
But we have $\Ap(P_{y,\lceil p_\pi/e \rceil})=\Ap_f(\infty,p_\pi)$ so
that we can compute $\repr(\Ap_f(\infty,p_\pi))(r)$ as
claimed. Moreover, as we can compute
$\repr(\Ap(P_j))(\Hi(r_j,d)/\Hi(x,d))$, by Lemma \ref{lemma:prectriv}, we can compute 
$\repr(\sum \Ap(P_j))(q)=\repr(P_0)(q)$ and we are done.
\end{proof}
\begin{remark}
  In the preceding proposition, we see that in order to be able to
  compute $\repr(\Ap_f(d,p_\pi))(r)$, we really use all the information
  contained in $\repr(\Ap(p_y,P_y))(y)$. If we use the flat precision,
  then to obtain $\repr(\Ap(d,p_\pi))$ we need to
  know $\repr(\Ap(\lceil p_\pi /e \rceil.d,p_\pi))(y)$. This shows
  that the flat precision is not well adapted to the computation of
  the Euclidean division in $\mSnu$ since a lot of information about
  the operands is not useful for the computation.
\end{remark}
The proposition shows that following the computations of Algorithm
\ref{algo1} on representatives modulo the given precision, we obtain
the outputs with the guaranty that the result has the claimed
precision. 

The last operation in $\mSnu$ (actually in $\mSpnu$) that we would
like to consider is the gcd computation. To begin with, we consider
some very simple examples, for elements of $\mSnu$ which are
polynomials. Suppose that $\Ri=\Z_5$, $\nu=0$ so that
$\mSnu=\Z_5[[u]]$. Let $\overline{P_1}=\repr(\Ap_f(\infty,2))(u-1)$ and
$\overline{P_2}=\repr(\Ap_f(\infty,2))(u-2)$. Then it is clear that for
all $P_1, P_2 \in \mSnu$ such that $P_1 = \overline{P_1} \mod
\Ap_f(\infty,2)$ and $P_2=\overline{P_2} \mod \Ap_f(\infty,2)$ then
$\gcd(P_1,P_2)=1$.  This can be seen by using the Euclidean algorithm
to compute the extended gcd of $\overline{P_1}$ and $\overline{P_2}$
in $\mSnu / \Ap_f(\infty,2)$ which obviously returns $1$. In this
case, it is safe to claim that
$\gcd(\overline{P_1},\overline{P_2})=1$. 

Next, consider $\overline{P_3}=\repr(\Ap_f(\infty,2))(u-1)$ and 
$\overline{P_4}=\repr(\Ap_f(\infty,2))(u-1)$. In this case, it is very 
easy to find different representatives of $\overline{P_3}$ and 
$\overline{P_4}$ the gcd of which is not equal. For instance, we can 
take $P_3=P_4=u-1$ in this case $\gcd(P_3,P_4)=u-1$ but if we take 
$P_3=u-1$ and $P_4=u-6$ then $\gcd(P_3,P_4)=1$. If we compute the gcd of 
$\overline{P_3}$ and $\overline{P_4}$ using the Euclidean algorithm, we 
obtain $u-1$ and we do not have enough precision on the next remainder 
to decide whether it vanishes or not. This example shows that, in the 
case that the gcd of the representatives is not \emph{surely} $1$ it is 
not even clear how to define it since the result may change depending on 
the representatives in $\mSnu$ that we use in order to compute it.

\subsection{Finite precision computation with modules with coefficients in $\mSnu$}

Let $\M_1$ and $\M_2$ be two maximal sub-$\mSnu$-modules of $\mSnu^d$.
In this section, we are interested by the computation of the
maximal sum $\M_1 +_\free \M_2$ of $\M_1$ and $\M_2$. We would like to
carry out computations with finite precision and have a guaranty on
the precision of the results.  The preceding example suggests that
even in the case $d=1$, we can not hope much in that direction.
Indeed, the computation of the maximal sum of two
sub-$\mSnu$-modules of $\mSpnu$ reduces to the computation of the gcd
of two elements of $\mSpnu$ and we have seen in
\S \ref{subsec:finiteprec}, that unless this sum is $\mSpnu$, we
can not guaranty that the result computed with finite precision is an
approximation of the result computed on representatives in $\mSpnu$.

As before, we need some extra-information, that we can get from the
mathematical context of our computation, in order to guaranty the
precision of the output. A very natural extra-information that can arise in practise is
the following: let $\M_1$ and $\M_2$ be two sub-$\mSnu$-modules of
$\mSpnu^d$ and we know that there exists a positive integer $c$ such
that $\M_2 \subset 1/\pi^{c} \M_1$. 
%Note that we may have this
%information if we know that $\M_1$ and $\M_2$ are contained in $\mSnu^d$.
We recognize a generalisation of
the hypothesis of Lemma \ref{lemma:changenu} where we have shown in
the case that $d=1$ that we can obtain a guaranty on the valuation
$v_\nu$ of approximations of elements of $K[[u]]$ for well chosen
$\nu$. This situation is also crucial in the paper \cite{caruso-lubicz}. We are
going to see that, although we don't know how to compute an approximation
of $\M_1 +_\free \M_2$, we can describe an algorithm which outputs an
approximation of $(\M_1 \otimes_{\mSnu} \mSnup) +_\free ({\M_2}
\otimes_{\mSnu} \mSnup)$ for a well chosen $\nu' > \nu$.

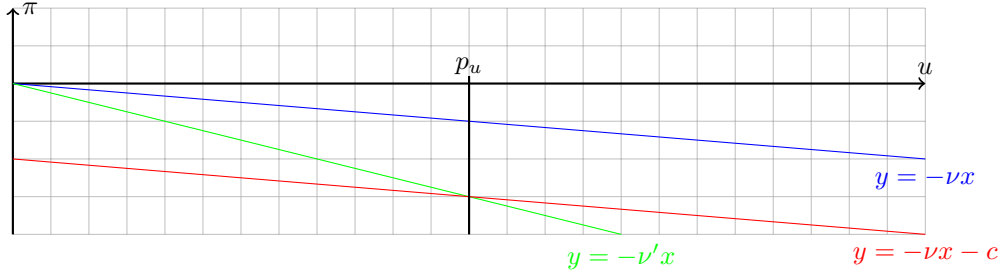
\begin{figure}[h]
\begin{center}
\begin{tikzpicture}
\draw[step=0.5cm,lightgray, very thin] (0,-2) grid (12,1);
%\draw[step=0.5cm,lightgray, very thin] (0,-1) grid (3,2);
\draw[->, thick] (0,-2) -- (0,1);
\draw[->, thick] (0,0) -- (12,0);
\draw[color=blue] (0,0) -- (12,-1) node[anchor=north] {$y=-\nu x$};
\draw[color=green] (0,0) -- (8,-2) node[anchor=north] {$y=-\nu' x$};
\draw[color=red] (0,-1) -- (12,-2) node[anchor=north] {$y=-\nu x-c$};
\draw[thick] (6,-2) -- (6,0.1);
\node[above] (d) at (6,0) {$p_u$};
\node[right] (p) at (0,1) {$\pi$};
\node[above] (u) at (12,0) {$u$};
\end{tikzpicture}
\end{center}
\caption{The computation of $\nu'$ from $p_u$ and $\nu$.}\label{newnu}
\end{figure}

In order to compute $\M_1 +_\free \M_2$, it is enough to be able to
compute $\M_1 +_\free \mSnu\cdot t$ where $t \in \M_2$. Indeed, let
$(t_1, \ldots, t_{h'})$ be a family of generators of $\M_2$, we have
$\M_1 +_\free \M_2 = \M_1 +_\free \mSnu\cdot t_1 +_\free \ldots
+_\free \mSnu\cdot t_{h'}$.  Let $t \in \M_2$ and let $(e_1, \ldots,
e_h)$ be a family of generators of $\M_1$. By our hypothesis, we know
that there exists $\lambda_i \in 1/\pi^c\cdot \mSnu$ such  that
$t=\sum \lambda_i e_i$. We remark that if all the $\lambda_i$ are in
$\mSnu$ then $t \in \M_1$ so that
$\M_1 + \mSnu\cdot t = \M_1$ and
there is nothing to do. Write $\lambda_i
= \sum_{j\geq 0} a^i_j u^j$ with $v_K(a^i_j)+\nu\cdot j \geq -c$.  Let
$p_u$ a positive integer, we are going to choose $\nu'$, as it is
explained in figure \ref{newnu}, such that $\sum_{j \geq p_u} a^i_j
u^i \in \mSnup$. For this it is enough to take $\nu' \geq \nu+c/p_u$.
Let $t'= \sum_i \lambda'_i e_i$ with 
$\lambda'_i=\sum_{j=0}^{p_u-1} a^i_j u^j$ and $t''=\sum_i \lambda''_i
e_i$ with $\lambda''_i=\sum_{p_u}^\infty a^i_j u^j$.
Using the same remark as above, we have: 
$$(\M_1 \otimes_{\mSnu} \mSnup) +_\free
(t\cdot \mSnup) = (\M_1 \otimes_{\mSnu} \mSnup) +_\free (t'\cdot
\mSnup) +_\free (t'' \cdot \mSnup) = (\M_1 \otimes_{\mSnu} \mSnup) +_\free (t'\cdot
\mSnup),$$
since $t''\cdot \mSnup \in \M_1$. Now, as $\lambda'_i$ is a
polynomial in $u$, we can obtain its valuation, greatest common
divisor and all the operations that we need in order to compute $(\M_1
\otimes_{\mSnu} \mSnup) +_\free (t\cdot \mSnup)$.  

We recall that we write
$\nu=\beta/\alpha$ with $\alpha,\beta$ relatively prime numbers and
let $\Rpi$ in an algebraic closure of $K$, be such that
$\Rpi^\alpha=\pi$.  Let $\Ri'=\Ri[\Rpi]$ and ${\eSnu}=\mSnu
\otimes_{\Ri} \Ri'$. The algorithm AddVector is an adaptation of the
algorithm MatrixReduction.

\begin{algorithm}
\SetKwInOut{Input}{input}\SetKwInOut{Output}{output}

\Input{
  \begin{itemize}
\item $M \in
M_{d\times h}(\mSnu)$, a matrix whose column vectors $C(i)$ for
$i=1,\ldots, h$ give generators of
$\M_1$ in the canonical basis of $\mSnu^d$ ;

\item a list $\lambda[1], \ldots, \lambda[h]$ such that $\sum \lambda_i
  C_i(M)=t$, $\lambda[i] \in 1/\pi^c\cdot \mSnu \cap K[u]$ and $\deg \lambda[i]
  \leq p_u-1$ for $i=1, \ldots, k$.
\end{itemize}}
\Output{$M \in M_{d \times h}(\mSnu)$ and a list $L$
  a matrix such that the column vectors $\Rpi^{L[i]}\cdot C_i(M)$ give
generators of $\M_1 +_\free t$ in the canonical basis of ${\eSnu}^d$}
\BlankLine
$L \leftarrow [0, \ldots, 0]$\;
\While{$\exists j \in \{1, \ldots, h\}$ such that 
$v_\nu(\lambda[j])-\frac{L[j]}{\alpha} < 0$}{
\While{$\Cond(\lambda,L)$ is satisfied}{
Pick up $j_0,j_1 \in \{ 1, \ldots, h\}$ such that
$\lambda[j_0]\cdot \lambda[j_1]\neq 0$,
$v_\nu(\lambda[j_0])-\frac{L[j_0]}{\alpha}
\leq v_\nu(\lambda[j_1])-\frac{L[j_1]}{\alpha}$ and
$\deg_W(\lambda[j_0]) \leq\deg_W(\lambda[j_1])$\;
\If{$v_\nu(\lambda[j_0]) > v_\nu(\lambda[j_1])$}{
$\delta_0 \leftarrow \lceil v_\nu(\lambda[j_0]) -
  v_\nu(\lambda[j_1]) \rceil$\;
  $\lambda[j_0] \leftarrow \pi^{-\delta_0} \lambda[j_0]$\;
  $L[j_0] \leftarrow L[j_0] - \alpha\cdot \delta_0$\;
}
$(q,r) \leftarrow \mathrm{Euclidean Division}(\lambda[j_0],\lambda[j_1])$\;
$\lambda[j_1] \leftarrow \lambda[j_1]-q\lambda[j_0]$\;
$C_{j_1}(M) \la C_{j_0}(M)+q C_{j_1}(M)$\;
}
Let $j_0$ such that $v_\nu(\lambda[j_0])-\frac{L[j_0]}{\alpha} =\min_{j=1, \ldots,
h}(\lambda[j])-\frac{L[j]}{\alpha})$\;
Let $j_1$ such that $v_\nu(\lambda[j_1])-\frac{L[j_1]}{\alpha}
=\min_{j\ne j_0}(\lambda[j])-\frac{L[j]}{\alpha})$\;
$L[j_0]  \la L[j_0]+\alpha v_\nu(\lambda[j_0])-L[j_0]-
\alpha v_\nu(\lambda[j_1])+L[j_1]$\;  

}
\Return{$M,L$}\;
\caption{AddVector}\label{sec4:addvertor}
\end{algorithm}
In the preceding algorithm, $Cond(\lambda,L)$ returns true if there
exists $j_0,j_1 \in \{ 1, \ldots, h\}$ such that
$\lambda[j_0]\cdot \lambda[j_1]\neq 0$,
$v_\nu(\lambda[j_0])-\frac{L[j_0]}{\alpha}
\leq v_\nu(\lambda[j_1])-\frac{L[j_1]}{\alpha}$ and
$\deg_W(\lambda[j_0]) \leq\deg_W(\lambda[j_1])$

We want to give a
consequence of this algorithm. We first need a definition.
\begin{definition}
  Let $\M$ be a sub-$\mSnu$-module of $\mSnu^d$.
  Let $\Ap$ be a sub-$\Ri$-module of $\mSnu$. We say that a matrix
  $M^r=(m^r_{ij})\in M_{d \times d'}(\mSnu / \Ap)$ is an
  $\Ap$-approximation of $\M$ is there exists a matrix $M=(m_{ij})\in
  M_{d\times d'}(\mSnu)$ whose columns
  are the coordinates of generators of $\M$ in the canonical basis of
  $\mSnu^d$ and such that $m^r_{ij}=\repr(\Ap)(m_{ij})$.
\end{definition}

By iterating this algorithm AddVector on a set of $(t_1, \ldots, t_{h'})$ of
generators of $\M_2$, we obtain the following theorem:

\begin{theorem}\label{th:main2}

  Let $\M_1$ and $\M_2$ be two finitely generated sub-$\mSnu$-modules
  of $\mSnu^d$ such that $\M_2 \subset 1/\pi^c \M_1$ for a positive
  integer $c$. Let $M_1=(m^1_{ij})$ and $M_2=(m^2_{ij})$ be the
  matrices with coefficients in $\mSnu$ of generators of $\M_1$ and
  $\M_2$ in the canonical basis of $\mSnu^d$. Let $p_u, p_\pi$ be
  positive integers and suppose that we are given 
  $M^r_1=(\repr(\Ap_0(p_u,p_\pi))(m^1_{ij}))$ 
and $M^r_2=(\repr(\Ap_0(p_u,p_\pi))(m^2_{ij}))$. 
Let $e$ be the number of columns of $M_2$ and let $\nu'=\nu +
ec/p_u$.
Then
there exists a polynomial time algorithm in the length of the
representation of $M^r_1$ and $M^r_2$ to compute a matrix 
$M^r_3=(M^3_{ij})$ with coefficients in $\mSnup / \Ap_0(p_u,p_\pi)$
which is a $\Ap_0(p_u,p_\pi)$-approximation of $$(\M_1 \otimes_{\mSnu}
\mSnup) +_\free
(\M_2 \otimes_{\mSnu} \mSnup).$$
\end{theorem}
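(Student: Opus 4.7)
The plan is to iterate Algorithm \ref{sec4:addvertor} (AddVector), once per column of $M_2$, implementing the ``one vector at a time'' strategy sketched just above the statement. Let $t_1, \ldots, t_e$ denote the columns of $M_2$ and set $\M^{(0)} = \M_1$ and $\M^{(k)} = \M^{(k-1)} +_\free \mSnup \cdot t_k$, so that $\M^{(e)} \otimes_{\mSnu} \mSnup$ is the target module. At each step $\M^{(k)} \supseteq \M_1$, so the inclusion $\M_2 \subset \frac{1}{\pi^c}\M_1 \subset \frac{1}{\pi^c}\M^{(k)}$ persists, and we may write each successive $t_{k+1}$ as $\sum_i \lambda^{(k+1)}_i e^{(k)}_i$ with $\lambda^{(k+1)}_i \in \frac{1}{\pi^c}\mSnu$ in the generators $e^{(k)}_i$ of $\M^{(k)}$.

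Setting $\nu' = \nu + ec/p_u$ from the outset is the key choice. For each $t_{k+1}$, split $\lambda^{(k+1)}_i = \lambda'_i + \lambda''_i$ where $\lambda'_i$ is the polynomial head of degree $< p_u$ and $\lambda''_i = \sum_{j \geq p_u} a^i_j u^j$. Since $v_\nu(\lambda^{(k+1)}_i) \geq -c$, the tail satisfies $v_{\nu'}(\lambda''_i) \geq -c + p_u(\nu' - \nu) \geq 0$ (using $\nu' - \nu \geq c/p_u$), so $\lambda''_i \in \mSnup$ and $\sum \lambda''_i e^{(k)}_i$ lies in $\M^{(k)} \otimes \mSnup$; it can therefore be dropped from the $+_\free$ sum. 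Only the polynomial heads $\lambda'_i$ are then needed as input to AddVector, and these heads are determined by the flat approximations $M_1^r$ and $M_2^r$ up to a controllable loss of $\pi$-adic precision. The extra factor $e$ in $\nu'$ affords enough slack for the $e$ iterations combined (the slopes at which the successive polynomial truncations become safe accumulate additively).

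The remaining work is algorithmic bookkeeping. The heads $\lambda'_i$ can be recovered by echelon reduction, following Proposition \ref{sec3.1:prop1} applied to the polynomial truncations of the current $\M^{(k)}$ and of $t_{k+1}$. Algorithm \ref{sec4:addvertor} itself performs only Euclidean divisions on polynomials of degree $< p_u$ together with elementary column/row operations and multiplications by powers of $\pi$ and $\Rpi$, all of which are stable on $\Ap_f(p_u,p_\pi)$-approximations by Proposition \ref{sec4:prop1} and Lemma \ref{lemma:prectriv}. The main obstacle is the precision analysis: one must verify that each Euclidean division introduces only a bounded loss of $\pi$-adic precision and that the accumulated loss, over the polynomially-many loop iterations (bounded in $d, e, p_u, p_\pi$), stays within the allotted budget. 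Granting this, the resulting matrix is an $\Ap_f(p_u,p_\pi)$-approximation of $(\M_1 \otimes_{\mSnu} \mSnup) +_\free (\M_2 \otimes_{\mSnu} \mSnup)$, and the whole procedure runs in time polynomial in the size of $M_1^r$ and $M_2^r$.
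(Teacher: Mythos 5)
Your proposal follows essentially the same route as the paper: the paper's own argument consists precisely of the head/tail splitting of the coefficients $\lambda_i$ with the slope increase $\nu' \geq \nu + c/p_u$ making the tail land in $\mSnup$, followed by the one-line observation that iterating AddVector over the $e$ generators of $\M_2$ (whence the factor $e$ in $\nu'$) yields the theorem. Your write-up is, if anything, slightly more explicit than the paper about why the inclusion $\M_2 \subset \pi^{-c}\M^{(k)}$ persists across iterations and where the precision analysis would need to be completed.
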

\begin{remark}
If we suppose in the theorem that $\M_2$ is maximal, then by Theorem
\ref{th:main} we can take $e=d.(2+\sum_{i=1}^{\lceil n/2 \rceil}
a_{2i})$ where
$\nu = [a_0; a_1, \ldots, a_n]$.
\end{remark}

\bibliographystyle{plain}
\bibliography{linalgwu-hal.bib} 
\end{document}